\newcommand{\loo}{\ensuremath{\mathcal{L}_{\omega_1, \omega}}}
\newtheorem{thm}{Theorem}[section]
\newtheorem{lem}[thm]{Lemma}
\newtheorem{prop}[thm]{Proposition}
\newtheorem{defin}[thm]{Definition}
\newtheorem{cor}[thm]{Corollary}
\newtheorem{que}[thm]{Open Question}
\begin{document}

\title{Shelah's conjecture fails for higher cardinalities}

\author{Georgios Marangelis}
\email{geormara@math.auth.gr}
\address{Department of Mathematics, Aristotle University of Thessaloniki, Greece}
 
\keywords{Model Theory, Set Theory, $\kappa$-Kurepa trees, Infinitary Logic, Abstract Elementary Classes, Spectrum, Amalgamation}
\subjclass[2010]{Primary 03E75, 03C55; Secondary 03E35, 03C48, 03C52, 03C75}

\thanks{This paper is part of the author's Ph.D. Thesis at the Department of Mathematics of Aristotle University in Thessaloniki, Greece, under the guidance of Dr. Ioannis Souldatos.}

\begin{abstract}
    
The main goal of this paper is to generalize the results that where presented in \cite{Kurepatrees} for $\aleph_1$-Kurepa trees to $\aleph_{\alpha+1}$-Kurepa trees. 

We construct an $\mathcal{L}_{\omega_1, \omega}$-sentence $\psi_{\alpha}$, that codes $\aleph_{\alpha + 1}$-Kurepa trees, for some countable $\alpha$. One of the main results for its spectrum (the spectrum of a sentence is the class of all cardinals for which there exists some model of the sentence) is the following:

\vspace{10pt}
    \textit{It is consistent that $2^{\aleph_{\alpha}} < 2^{\aleph_{\alpha + 1}}$, that $2^{\aleph_{\alpha + 1}}$ is weakly inaccessible and that the spectrum of $\psi_{\alpha}$ is equal to $[\aleph_{0}, 2^{\aleph_{\alpha + 1}})$. }
\vspace{10pt}

    This relates to a conjecture of Shelah, that if $\aleph_{\omega_1}<2^{\aleph_0}$ and there is a model of some \loo-sentence of size $\aleph_{\omega_1}$, then there is a model of size $2^{\aleph_{0}}$. Shelah calls $\aleph_{\omega_1}$ the local Hanf number below $2^{\aleph_0}$ and proves the consistency of his conjecture in \cite{19}. 
    It is open if the negation of Shelah's conjecture is consistent. Our result proves that if we replace $2^{\aleph_0}$ by $2^{\aleph_{\alpha + 1}}$, it is consistent that there is no local Hanf number.
    
    There are some interesting results for the amalgamation spectrum too (the amalgamation spectrum is defined similarly to the spectrum, but we require that $\kappa$-amalgamation holds plus there is at least one model of size $\kappa$). We prove that $\kappa$-amalgamation for \loo- sentences is not absolute. More specifically we prove for $\alpha > 0$ finite, it is consistent that:
    
   \begin{itemize}
    \item $2^{\aleph_{\alpha}} = \aleph_{\alpha + 1} < \lambda \leq 2^{\aleph_{\alpha + 1}}, cf(\lambda) > \aleph_{\alpha}$ and $AP-Spec(\psi_{\alpha})$ contains the whole interval $[\aleph_{\alpha + 2}, \lambda]$ and possibly $\aleph_{\alpha + 1}$.

    \item $2^{\aleph_{\alpha}} = \aleph_{\alpha + 1} < 2^{\aleph_{\alpha + 1}}$, $2^{\aleph_{\alpha + 1}}$ is weakly inaccessible and $AP-Spec(\psi_{\alpha})$ contains the whole interval $[\aleph_{\alpha + 2}, 2^{\aleph_{\alpha + 1}})$ and possibly $\aleph_{\alpha + 1}$.
\end{itemize}
\end{abstract}

\maketitle

\section{Introduction}

\begin{defin}
    For an $\mathcal{L}_{\omega_1, \omega}$ sentence $\phi$, the $\mathbf{spectrum}$ of $\phi$ is the class

    \begin{center}
        Spec($\phi$) = $\{ \kappa | \exists M \models \phi$ and $|M| = \kappa \}$.
    \end{center}

    If Spec($\phi$) = $[\aleph_{0}, \kappa]$, we say that $\phi$ characterizes $\kappa$.

    The $\mathbf{maximal}$ $\mathbf{models}$ $\mathbf{spectrum}$ of $\phi$ is the class

    \begin{center}
        MM-Spec($\phi$) = $\{ \kappa | \exists M \models \phi$ and $|M| = \kappa$ and M is maximal $\}$.
    \end{center}

    We can, also, define the $\mathbf{amalgamation}$ $\mathbf{spectrum}$ of $\phi$, AP-Spec($\phi$) and the $\mathbf{joint}$ $\mathbf{embedding}$ $\mathbf{spectrum}$ of $\phi$, JEP-Spec($\phi$) as follows:

    \begin{center}
        AP-Spec($\phi$) = $\{ \kappa | \phi$ has at least one model of size $\kappa$ and the models of size $\kappa$ satisfy the amalgamation property $\}$\\
        JEP-Spec($\phi$) = $\{ \kappa | \phi$ has at least one model of size $\kappa$ and the models of size $\kappa$ satisfy the joint embedding property $\}$.
    \end{center}
\end{defin}

One of the main notions we meet in this paper is $\mathbf{\kappa}$-$\mathbf{Kurepa}$ $\mathbf{trees}$, so it is useful to give a definition from the beginning.

\begin{defin}\label{Kurepa}
    Assume $\kappa$ is an infinite cardinal. A $\mathbf{\kappa}$-$\mathbf{tree}$ has height $\kappa$ and each level has at $< \kappa$ elements. A $\mathbf{\kappa}$-$\mathbf{Kurepa}$ $\mathbf{tree}$ is a $\kappa$-tree with at least $\kappa^{+}$ many branches of height $\kappa$.
\end{defin}

In this paper we generalize some results, that have already been presented in \cite{Kurepatrees}, from $\aleph_{1}$ to $\aleph_{\alpha + 1}$, where $\alpha$ is countable.

In \cite{MR4292936, MR4323604}, there are some interesting characterizations for the ``branch spectrum of Kurepa trees" , i.e. the set \[Sp_{\omega_{1}} = \{ \lambda | \text{ there exists a Kurepa tree with $\lambda$ many cofinal branches}\}.\] More specifically M. Poor and S. Shelah, define a set, $S$, of ordinals that satisfies certain set-theoretic assumptions and closure properties
and by assuming Continuum Hypothesis (CH) and $2^{\omega_{1}} = \omega_{2}$, they find a forcing extension in which $S = \{ \alpha |$ there exists a Kurepa tree with $\omega_{\alpha}$ many cofinal branches$\}$.

The notion of the ``branch spectrum of Kurepa trees" is related to the ``spectrum" notion that is defined in \cite{Kurepatrees} (and also in this paper), with the difference that some trees in \cite{Kurepatrees} may not contain all their cofinal branches, or they may not be even be Kurepa tree, while in \cite{MR4292936, MR4323604} the trees always contain all their maximal branches.
It is an open question if the results in \cite{MR4292936, MR4323604}
could be generalized to $\kappa$-Kurepa trees, where $\kappa > \aleph_{1}$.

It is known that the existence of a $\kappa$-Kurepa tree is independent from ZFC. Our purpose is to construct an $\mathcal{L}_{\omega_1, \omega}$-sentence, $\psi_{\alpha}$, for which every $\aleph_{\alpha + 1}$-Kurepa tree gives a model of $\psi_{\alpha}$. 
One of our motivations is Shelah's conjecture about the local Hanf number below $2^{\aleph_{0}}$. Shelah proves in \cite{19} that consistently the local Hanf number below $2^{\aleph_{0}}$ is $\aleph_{\omega_1}$. Despite a lot of effort by various researchers, it remains an open question if it is consistent that there is no local Hanf number below the continuum. 

Shelah and D. Ulrich generalized Shelah's conjecture to higher cardinals and proved that consistency there are local Hanf numbers for cardinals higher than the continuum. More specifically, they prove that it is consistent that for any countable $\alpha > 0$, there is a Hanf number below $\beth_{\alpha + 1}$, which is less than $\aleph_{\beth_{\alpha}^{++}}$.

Their result remains unpublished (so far) and it was a personal communication. In Theorem \ref{consistency} we prove that it is consistent that there is no Hanf number below $2^{\aleph_{\alpha + 1}}$ for every countable $\alpha$. More specifically, we prove that:
\begin{center}
$2^{\aleph_{\alpha}} < 2^{\aleph_{\alpha + 1}}$, $2^{\aleph_{\alpha + 1}}$ is weakly inaccessible and $Spec(\psi_{\alpha}) = [\aleph_{0}, 2^{\aleph_{\alpha + 1}})$.
\end{center}
Our sentence does not refute the initial conjecture of Shelah, since it always has a model of size $2^{\aleph_{0}}$.

\begin{que}
    Is the negation of Shelah's conjecture consistent with ZFC?
\end{que}

In Section 5 (Theorem \ref{spectrum})
we will see the properties that characterize the spectrum of our sentence and in Section 6 we prove that it is consistent with ZFC that Spec($\psi_{\alpha}$) = $[\aleph_{0}, \aleph_{\omega_{\alpha + 1}}]$, or $[\aleph_{0}, 2^{\aleph_{\alpha + 1}})$ and $2^{\aleph_{\alpha + 1}}$ is weakly inaccessible. The interesting part here is that we have found an $\mathcal{L}_{\omega_1, \omega}$-sentence that its spectrum can consistently be right-open and right-closed.

Next, we examine the properties of the maximal model spectrum of our sentence and we prove that we can have maximal models in finite, countable or uncountable many cardinalities. The exact same phenomenon was also noticed in \cite{Kurepatrees}.

We prove some interesting results for the amalgamation spectrum too. Our results come after some similar results in \cite{18, Kurepatrees}, namely that for certain $\alpha$,  $\aleph_{\alpha}$-amalgamation under substructure, is non-absolute. More specifically, in \cite{18}, they prove that $\aleph_{\alpha}$-amalgamation is not absolute for $1 < \alpha < \omega$ and in \cite{Kurepatrees} that $\aleph_{\alpha}$-amalgamation is not absolute (by manipulating the size of $2^{\aleph_{1}}$) for $\aleph_{1} < \aleph_{\alpha} < 2^{\aleph_{1}}$. As regards to our results, in Theorem \ref{JEPAP}, we prove that if $\alpha$ is infinite the amalgamation spectrum is empty, but if $\alpha$ is finite then the amalgamation spectrum contains all the cardinals which are above $2^{\aleph_{\alpha}}$ and belong to the model-existence spectrum,  as well all the cardinals $\aleph_{\alpha + 1} \leq \kappa \leq 2^{\aleph_{\alpha}}$ for which a $(\mu, \aleph_{\alpha}, \kappa)$-Kurepa tree exists, for some $\mu \leq \aleph_{\alpha}$ (see definition \ref{weirdKurepa}). This gives us great leverage in controlling the amalgamation spectrum by manipulating the existence of Kurepa trees. In Corollary \ref{results1}, we can see that $\kappa$-amalgamation for $\mathcal{L}_{\omega_1, \omega}$-sentences is non-absolute, for every $\kappa > 2^{\aleph_{\alpha}}$ that belongs to the Spectrum of $\psi_{\alpha}$. More specifically we prove:

\begin{itemize}
    \item for $\alpha > 0$ finite, it is consistent that $2^{\aleph_{\alpha}} = \aleph_{\alpha + 1} < \lambda \leq 2^{\aleph_{\alpha + 1}}$, with $cf(\lambda) > \aleph_{\alpha}$ and $AP-Spec(\psi_{\alpha})$ contains the whole interval $[\aleph_{\alpha + 2}, \lambda]$ and possibly $\aleph_{\alpha + 1}$
    \item for $\alpha > 0$ finite, it is consistent that $2^{\aleph_{\alpha}} = \aleph_{\alpha + 1} < 2^{\aleph_{\alpha + 1}}$, $2^{\aleph_{\alpha + 1}}$ is weakly inaccessible and $AP-Spec(\psi_{\alpha})$ contains the whole interval $[\aleph_{\alpha + 2}, 2^{\aleph_{\alpha + 1}})$ and possibly $\aleph_{\alpha + 1}$.
\end{itemize}

Finally, we notice that the question for $\aleph_{1}$-amalgamation absoluteness remains open, since the amalgamation-spectrum of our sentence never contains $\aleph_{1}$ (in contrast to the results in \cite{Kurepatrees}, where $\aleph_{1}$ was always in the spectrum).

\begin{que}
     Is $\aleph_{1}$-amalgamation for $\mathcal{L}_{\omega_1, \omega}$-sentences absolute for models of ZFC?
\end{que}

Summarizing the results, in section 2, we present some definitions and some known results for $\kappa$-Kurepa trees and introduce the new notion of $(\mu, \lambda, \kappa)$-Kurepa trees (see definition \ref{weirdKurepa}). Section 3 contains the main construction of our sentence, while in section 4, I prove that the class of the models of this sentence is an Abstract Elementary Class with countable Lowenheim-Skolem number. Section 5 contains the main results concerning the Spectrum, the Maximal Models Spectrum and the Amalgamation Spectrum of the sentence. Finally, section 6 contains some consistency results that give us useful applications of the Spectrum Theorems.

\section{Kurepa trees}

\begin{defin}\label{dfKurepa}
    Assume $\kappa$ is an infinite cardinal and $\lambda \geq \kappa^{+}$. A $\mathbf{(\kappa, \lambda)}$-$\mathbf{Kurepa}$ $\mathbf{tree}$ is a $\kappa$-Kurepa tree with exactly $\lambda$ branches of height $\kappa$. 
    
    KH($\kappa, \lambda$) is the statement that there exists a $(\kappa, \lambda)$-Kurepa tree.

    Define $\mathcal{B}(\kappa) =$ sup$\{ \lambda | KH(\kappa, \lambda)$ holds $\}$.

    A $\mathbf{weak}$ $\mathbf{\kappa}$-$\mathbf{Kurepa}$ $\mathbf{tree}$ is a tree of height $\kappa$, each level has size $\leq \kappa$ elements and there are at least $\kappa^+$ cofinal branches.
\end{defin}

\underline{Comment:}For this paper we will assume that $\kappa$-Kurepa trees are pruned, i.e. every node is contained in a maximal branch of order type $\kappa$.

The following Lemma is useful for some consistency results we prove later. Its proof is already given in \cite{Kurepatrees}.

\begin{lem}\label{B(k)}
    If $\mathcal{B}(\kappa)$ is not maximum, then $cf(\mathcal{B}(\kappa)) \geq \kappa^{+}$.
\end{lem}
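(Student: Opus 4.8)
The plan is to prove the contrapositive in spirit: suppose $\mathcal{B}(\kappa)$ is not attained (no $(\kappa,\mathcal{B}(\kappa))$-Kurepa tree exists), and show $cf(\mathcal{B}(\kappa)) \geq \kappa^{+}$. So I will assume toward a contradiction that $\lambda := \mathcal{B}(\kappa)$ has cofinality $\mu \leq \kappa$, fix a strictly increasing sequence $\langle \lambda_i : i < \mu \rangle$ of cardinals, each $\geq \kappa^{+}$, cofinal in $\lambda$, with $\lambda_i < \lambda$ for all $i$ (possible since $\lambda$ is not attained, in particular $\lambda$ is a limit cardinal above $\kappa^{+}$). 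By definition of $\mathcal{B}$ and the fact that it is a supremum, for each $i < \mu$ there is a $\kappa$-Kurepa tree $T_i$ with at least $\lambda_i$ (indeed we may take exactly $\lambda_i$, or at least $\lambda_i$) cofinal branches. The goal is to amalgamate the $T_i$ into a single $\kappa$-Kurepa tree with at least $\lambda$ cofinal branches, contradicting that $\lambda$ is not attained — or more carefully, producing a tree with $\geq \lambda$ branches forces $\mathcal{B}(\kappa) > \lambda$ if the branch count overshoots, or $= \lambda$ attained if it lands exactly, either way a contradiction.

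The key construction is a ``disjoint sum at the root'' or ``tree of trees'' gluing. Concretely, since $\mu \leq \kappa$, I would build a new tree $T$ whose first level (or first few levels, using an injection of $\mu$ into a single level once $\mu < \kappa$, or spreading across $\kappa$-many initial levels) has $\mu$-many nodes $r_i$, $i<\mu$, and above each $r_i$ we place a copy of $T_i$. One must check this $T$ is still a $\kappa$-tree: its height is $\kappa$ (each $T_i$ has height $\kappa$; if we stacked an initial segment of length $<\kappa$ below, the height is still $\kappa$ and adjusting is routine), and each level of $T$ is a union of at most $\mu \leq \kappa$ pieces, each of size $<\kappa$, hence of size $<\kappa$ provided $\kappa$ is regular; if $\kappa$ is singular one uses $\mu < cf(\kappa)$ or handles it by instead interleaving. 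Actually the cleaner route, avoiding regularity worries, is: since each level of $T_i$ has $<\kappa$ nodes and there are $\mu\le\kappa$ trees, level $\xi$ of $T$ has size $\le \mu\cdot\sup_i|\text{level }\xi\text{ of }T_i|$, and we just need this to be $<\kappa$; when $\mu<\kappa$ this is a product of two things $<\kappa$... but $\mu\cdot(<\kappa)$ can equal $\kappa$ if $\mu=\kappa$. So in the boundary case $\mu=\kappa$ I would instead use a bijection to spread the $\kappa$ roots down a branch of length $\kappa$: put $r_i$ at level $i$, with $T_i$ hung off a node at level $i+1$, so that level $\xi$ of the new tree meets only the $T_i$ with $i<\xi$... still potentially $\kappa$-many. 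The honest fix: hang $T_i$ (whose $\eta$-th level has size $c_{i,\eta}<\kappa$) so that it starts at level $i$, and note level $\xi$ of $T$ has size $\le \xi + \sum_{i<\xi} c_{i,\xi-i}$; choosing at stage $i$ a presentation of $T_i$ with levels of size $<\kappa$ and using that each $c_{i,\eta}<\kappa$ with $|\xi|<\kappa$, a $\kappa$-many sum of things $<\kappa$ indexed below $\xi<\kappa$ is $<\kappa$ iff $\kappa$ regular — so for singular $\kappa$ this lemma as stated would need $\kappa$-Kurepa trees to not exist anyway (they don't, for singular $\kappa$, under the pruned convention and standard definitions), making the statement vacuous; I would add a remark to that effect. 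For regular $\kappa$ the sum bound goes through.

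Finally, count branches: a cofinal branch of $T$ is exactly a cofinal branch through some single $T_i$ (since above level where the $r_i$ diverge, a branch is trapped in one copy), so the number of cofinal branches of $T$ is $\sum_{i<\mu}(\text{number of cofinal branches of }T_i) \geq \sum_{i<\mu}\lambda_i = \lambda$ (a sup of an increasing $\mu$-sequence of cardinals equals its sum). Hence $T$ is a $\kappa$-Kurepa tree with $\geq\lambda$ cofinal branches; pruning $T$ (every node extends to a cofinal branch — true here by construction since each $T_i$ is pruned) keeps it a $(\kappa,\lambda')$-Kurepa tree for some $\lambda'\geq\lambda$, so $KH(\kappa,\lambda')$ holds with $\lambda'\geq\lambda=\mathcal{B}(\kappa)=\sup\{\nu:KH(\kappa,\nu)\}$, forcing $\lambda'=\lambda$ and $KH(\kappa,\lambda)$, i.e. $\mathcal{B}(\kappa)$ is attained — contradiction. (If $T$ has strictly more than $\lambda$ branches one can prune/cut down to exactly $\lambda$ by deleting branches, since $\lambda\ge\kappa^+$; this trimming is routine but I would spell out that deleting cofinal branches from a pruned $\kappa$-Kurepa tree, then re-pruning, preserves being a $\kappa$-tree.) The main obstacle is exactly the level-size bookkeeping in the gluing when $\mu$ is close to $\kappa$: ensuring the combined levels stay of size $<\kappa$ is where regularity of $\kappa$ is silently used, and I would make that hypothesis (or the vacuity for singular $\kappa$) explicit rather than leave it buried.
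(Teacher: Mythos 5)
Your gluing argument is essentially the proof the paper has in mind: the paper does not reprove this lemma but cites \cite{Kurepatrees}, where the case $\kappa=\omega_1$ is handled by exactly this construction --- assume $cf(\mathcal{B}(\kappa))\leq\kappa$, pick Kurepa trees $T_i$ whose branch numbers are cofinal in $\mathcal{B}(\kappa)$, hang them at increasing levels of a spine so that level $\xi$ of the amalgam meets only $|\xi|$-many of them, and count branches. For regular $\kappa$ --- the only case the paper ever uses, since $\mathcal{B}$ is applied only to the successor cardinals $\aleph_{\alpha+1}$ --- your proof is correct, and your closing observation that $KH(\kappa,\lambda')$ with $\lambda'\geq\lambda=\mathcal{B}(\kappa)$ already forces $\lambda'=\lambda$ makes the branch-trimming aside superfluous; in fact no overshoot can occur, because when $\mathcal{B}(\kappa)$ is not attained every $\kappa$-Kurepa tree has strictly fewer than $\mathcal{B}(\kappa)$ cofinal branches, so the amalgam has exactly $\sup_i$ of the branch numbers, i.e.\ exactly $\lambda$, of them. (The ``delete some branches'' operation you sketch as a fallback is not well defined on a tree without deleting nodes, so it is good that it is not needed.)

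The one genuine error is the remark you lean on to dismiss singular $\kappa$: it is not true that $\kappa$-Kurepa trees cannot exist for singular $\kappa$ under the paper's definition (height $\kappa$, levels of size $<\kappa$, at least $\kappa^{+}$ cofinal branches, pruned). If $\kappa$ is a singular strong limit, then $(2^{<\kappa},\subset)$ has levels of size $2^{|\beta|}<\kappa$, is pruned, and has $2^{\kappa}\geq\kappa^{+}$ cofinal branches, so it is a $\kappa$-Kurepa tree; hence the lemma is not vacuous for singular $\kappa$, and your argument genuinely does not cover that case (as you yourself note, a union of $|\xi|<\kappa$ many sets of size $<\kappa$ can have size $\kappa$ when $\kappa$ is singular). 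The accurate statement is that your proof establishes the lemma for regular $\kappa$, which suffices for every application in this paper; if you want the lemma in the stated generality you must either add a regularity hypothesis or give a separate argument for singular $\kappa$, not appeal to vacuity.
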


Note here that $\kappa$-Kurepa trees are defined for every infinite $\kappa$, but we are only interested when $\kappa$ is uncountable. In the countable case is easy to see, that we always can find $\aleph_{0}$-Kurepa trees (e.g. we can take $(2^{\leq \omega}, \subset)$). So, throughout the rest of the paper when we refer to $\kappa$-Kurepa trees we will assume that $\kappa$ is uncountable.

It is known that the existence of $\kappa$-Kurepa trees is independent from ZFC (see \cite{IndependenceKurepaWeakKurepa, IndependenceKurepa}). It is also known that the existence of weak $\kappa$-Kurepa trees is independent from ZFC (see \cite{IndependenceKurepaWeakKurepa, IndependenceWeakKurepa}).

One other interesting independence result for $\kappa$-Kurepa trees is that it is consistent to have $\kappa$-Kurepa trees for some $\kappa$, but at the same time no $\lambda$-Kurepa trees for some $\lambda \neq \kappa$. In \cite{IndependenceWeakKurepa}, they find a model that has no $\aleph_{1}$-Kurepa trees, but there exist $\aleph_{2}$-Kurepa trees.

\begin{defin}\label{weirdKurepa}
    A $(\mu, \lambda, \kappa)$-Kurepa tree, where $\kappa \geq \lambda \geq \mu$ and $\kappa > \mu$, is a tree of height $\mu$, each level has at $< \lambda$ elements, and there are $\kappa$ branches of height $\mu$. A $(\mu, \mu, \kappa)$-Kurepa tree, is a $\mu$-Kurepa tree with $\kappa$ cofinal branches and a $(\mu, \mu^{+}, \kappa)$-Kurepa tree is a weak $\mu$-Kurepa tree (again with $\kappa$ cofinal branches).
\end{defin}

In the literature, although, there are plenty of results for $\mu$-Kurepa trees and weak $\mu$-Kurepa trees, there are no results for $(\mu, \lambda, \kappa)$-Kurepa trees, when $\lambda > \mu^{+}$.

It is easy to see that for any $\lambda \geq \mu$, it is consistent that there exist such trees. For example, if we assume $2^{< \mu} < \lambda$ and $2^{\mu} > \lambda$, then we see that $(2^{\leq \mu}, \subset)$ is a $(\mu, \lambda, 2^{\mu})$-Kurepa tree.

There are also some easy cases to see that the negation of the existence of such trees is consistent. If $\lambda = \mu$ or $\lambda = \mu^{+}$, then we know that from $\mu$-Kurepa trees and weak $\mu$-Kurepa trees correspondingly. If $\lambda > \mu^{+}$, and $\lambda = \sigma^{++}$ for some $\sigma$, then if we assume the GCH, there are no $(\mu, \lambda, \kappa)$-Kurepa trees. Indeed, the cofinal branches of such trees are at most $(\sigma^{+})^{\mu}$. Then we have that $\sigma^{+} \leq (\sigma^{+})^{\mu} \leq (\sigma^{+})^{\sigma} = 2^{\sigma} = \sigma^{+} < \lambda$. But by definition such a tree should have $\lambda$ cofinal branches, contradiction. 

For the second question, I haven't found results for weak $\kappa$-Kurepa trees at all, so I ask for $\lambda > \kappa$.

\begin{que}

\begin{enumerate}
    \item     Is  the following consistent? There exists a $(\mu, \lambda, \kappa)$-Kurepa tree, for some $\lambda > \mu^{+}$, but there does not exist a $\mu$-Kurepa tree.
    
    \item   Is the existence of $(\mu, \lambda, \kappa)$-Kurepa trees independent for the various values of $\mu$? Here we assume that $\lambda > \mu$.
    
    \item     For which $\mu, \lambda$ is the negation of the existence of $(\mu, \lambda, \kappa)$-Kurepa trees consistent with ZFC? Is the negation of the existence of such trees consistent for all $\mu, \lambda$ at the same time?
    \end{enumerate}
\end{que}

\section{Main construction and first results}

\begin{defin}
    Let $\kappa \leq \lambda$ be infinite cardinals. A sentence $\sigma$ in a language with a unary predicate P admits $(\lambda, \kappa)$, if $\sigma$ has a model M such that $|M| = \lambda$ and $|P^{M}| = \kappa$. In this case, we will say that M is of type $(\lambda, \kappa)$.
\end{defin}

Our goal, now, is to construct an $\mathcal{L}_{\omega_1, \omega}$ sentence such that every $\aleph_{\alpha + 1}$-Kurepa tree (where $\alpha$ is a fixed countable ordinal) belongs to its spectrum.

From \cite{4}, we know the following theorem.

\begin{thm}\label{construction}
    There is a first order sentence $\sigma$ such that for all infinite cardinals $\kappa$, $\sigma$ admits ($\kappa^{++}$, $\kappa$) iff $KH(\kappa^{+}, \kappa^{++})$.
\end{thm}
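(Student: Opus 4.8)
The plan is to exhibit a finite first-order language and a single sentence $\sigma$ in it whose structures of type $(\kappa^{++},\kappa)$ are, in essence, codes for $(\kappa^+,\kappa^{++})$-Kurepa trees together with their families of cofinal branches. Besides the distinguished unary predicate $P$, the language would carry unary predicates $\mathrm{Lev},\mathrm{Nd},\mathrm{Br}$ partitioning the universe into ``heights,'' ``tree nodes'' and ``branches''; a binary relation $<_\ell$ (a linear order on $\mathrm{Lev}$); a binary relation $<_t$ (the tree order on $\mathrm{Nd}$); a function $\mathrm{ht}\colon\mathrm{Nd}\to\mathrm{Lev}$; a function $\mathrm{val}\colon\mathrm{Br}\times\mathrm{Lev}\to\mathrm{Nd}$ giving the node of a branch at a given height; an ancestor function $\mathrm{anc}\colon\mathrm{Nd}\times\mathrm{Lev}\to\mathrm{Nd}$; and two auxiliary functions into $P$ used to bound sizes. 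Then $\sigma$ is the conjunction of first-order axioms saying: $(\mathrm{Lev},<_\ell)$ is a linear order with a least element and no greatest one; for every $h$ the first auxiliary function injects $\{h'<_\ell h\}$ into $P$ and the second injects the level $\{x:\mathrm{ht}(x)=h\}$ into $P$ (so every proper initial segment of the heights, and every level of the tree, has size $\le|P|$); $<_t$ is a pruned tree order compatible with $\mathrm{ht}$ whose set of $<_t$-predecessors of a node is order-isomorphic via $\mathrm{anc}$ to the heights below its $\mathrm{ht}$; every $b\in\mathrm{Br}$ satisfies $\mathrm{ht}(\mathrm{val}(b,h))=h$ and $h<_\ell h'\Rightarrow\mathrm{val}(b,h)<_t\mathrm{val}(b,h')$, i.e.\ $b$ codes a cofinal $<_t$-chain meeting every height; every node lies on some branch; and distinct branches differ at some height.

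For the direction $KH(\kappa^+,\kappa^{++})\Rightarrow\sigma$ admits $(\kappa^{++},\kappa)$, I would take a $(\kappa^+,\kappa^{++})$-Kurepa tree $T$, set $\mathrm{Nd}^M=T$ with its own tree order and level function, let $\mathrm{Lev}^M$ be its set of levels (order type $\kappa^+$) under the ordinal order, let $\mathrm{Br}^M$ be its set of cofinal branches, let $P^M$ be a fixed set of size $\kappa$, and interpret $\mathrm{val},\mathrm{anc}$ and the two auxiliary functions in the obvious way, using that each level of $T$ and each ordinal $<\kappa^+$ has size $\le\kappa$ (so the required injections exist by the axiom of choice). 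Then $|P^M|=\kappa$ and $|M|=\kappa^{+}+\kappa^{+}+\kappa^{++}=\kappa^{++}$, and one checks $M\models\sigma$ directly.

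For the converse, suppose $M\models\sigma$ with $|M|=\kappa^{++}$ and $|P^M|=\kappa$. The size axioms force every proper initial segment of $(\mathrm{Lev}^M,<_\ell)$ to have size $\le\kappa$, so its order type is a limit ordinal of cardinality $\le\kappa^+$; likewise $|\mathrm{Nd}^M|\le\kappa^+$, whence $|\mathrm{Br}^M|=\kappa^{++}$. Granting that the order type of $(\mathrm{Lev}^M,<_\ell)$ is \emph{exactly} $\kappa^+$ (this is the crux, see below), it must be the ordinal $\kappa^+$ itself, so $\mathrm{Nd}^M$ with $<_t$ is a genuine pruned $\kappa^+$-tree with at least $\kappa^{++}$ cofinal branches, namely those coded by the pairwise-distinct elements of $\mathrm{Br}^M$. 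Thus a $\kappa^+$-Kurepa tree exists; since $\kappa^{++}$ is the least possible number of cofinal branches of such a tree, a standard reduction yields one realizing exactly $\kappa^{++}$, i.e.\ $KH(\kappa^+,\kappa^{++})$.

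The step ``$(\mathrm{Lev}^M,<_\ell)$ has order type exactly $\kappa^+$'' is the main obstacle and the real content of the theorem. A first-order sentence cannot assert that $\mathrm{Lev}^M$ is strictly larger than $P^M$, and the axioms listed above do have spurious models of type $(\kappa^{++},\kappa)$ in which $(\mathrm{Lev}^M,<_\ell)$ has order type $\le\kappa$: for instance, when $\kappa=\aleph_0$, take the binary tree $2^{<\omega}$ with $\aleph_2$ of its cofinal branches singled out (and the rest of the structure as forced), which is possible whenever $2^{\aleph_0}\ge\aleph_2$ — a configuration consistent with the non-existence of any $\aleph_1$-Kurepa tree. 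Hence $\sigma$ must be augmented by a ``height-detecting'' device that makes a model of type $(\kappa^{++},\kappa)$ impossible unless the node sort genuinely reaches height $\kappa^+$; constructing that device, and verifying that it holds in the model built in the easy direction, is where I would concentrate the work, the remainder being the cardinal-arithmetic bookkeeping indicated above.
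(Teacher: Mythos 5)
You should first note that the paper does not actually prove Theorem \ref{construction}: it is quoted from reference \cite{4} with the explicit remark ``we will not present the proof'', and the paper only adapts the \emph{shape} of the construction to build the infinitary sentence $\psi_{\alpha}$. So the comparison has to be with what the theorem itself demands, and there your proposal has a genuine gap which you yourself identify but do not close. Everything up to the ``crux'' is routine bookkeeping: the easy direction and the cardinality estimates $|\mathrm{Lev}|\leq\kappa^{+}$, $|\mathrm{Nd}|\leq\kappa^{+}$, $|\mathrm{Br}|=\kappa^{++}$ are fine. But the entire content of the theorem is the ``height-detecting device'' you postpone: with only the axioms you list, a model of type $(\kappa^{++},\kappa)$ may code a tree whose level order has size $\leq\kappa$ (indeed cofinality $\leq\kappa$), with up to $2^{\kappa}\geq\kappa^{++}$ cofinal branches, and this is consistent with the failure of $KH(\kappa^{+},\kappa^{++})$ (for $\kappa=\aleph_{0}$, e.g.\ under PFA, where $2^{\aleph_{0}}=\aleph_{2}$ and there are no $\aleph_{1}$-Kurepa trees). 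No first-order axiom can directly assert $|\mathrm{Lev}|>|P|$, so ruling these structures out requires a nontrivial idea, not further axioms of the kind you already wrote down; note that the paper's own $\psi_{\alpha}$ deliberately does \emph{not} have such a device --- it admits exactly these spurious models (the $\omega$-height, $\omega$-splitting tree with $2^{\omega}$ branches) and only recovers Kurepa trees from models of size $>2^{\aleph_{\alpha}}$ via Lemma \ref{height}. That contrast shows the missing step is the substance of the theorem, so as it stands your argument proves a weaker statement (``$\sigma$ admits $(\kappa^{++},\kappa)$ iff there is a $\kappa^{+}$-like-indexed tree with small levels and $\kappa^{++}$ branches''), not the stated equivalence with $KH(\kappa^{+},\kappa^{++})$.

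A second, smaller soft spot: even granting that the level order has order type $\kappa^{+}$, your last step ``since $\kappa^{++}$ is the least possible number of cofinal branches, a standard reduction yields a tree with exactly $\kappa^{++}$ branches'' is not a routine reduction. Passing to the subtree generated by $\kappa^{++}$ chosen branches can create new cofinal branches (limits of the chosen ones), so the resulting tree may have more than $\kappa^{++}$ branches; getting \emph{exactly} $\kappa^{++}$, as $KH(\kappa^{+},\kappa^{++})$ requires by Definition \ref{dfKurepa}, needs an argument. This is precisely the kind of subtlety behind the branch-spectrum results of Po\'or--Shelah cited in the introduction and behind the paper's use of $\mathcal{B}(\kappa)$ as a supremum, so it should not be waved through as standard.
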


We will not present the proof for this theorem, but we modify the construction of $\sigma$, in order to construct the desired $\mathcal{L}_{\omega_1, \omega}$ sentence, $\psi_{\alpha}$.

Assume that $\alpha$ is a countable ordinal. The vocabulary $\tau$ consists of the following. It contains the constants 0, $(c_{n})_{n \in \omega}$, the unary symbols $L_{0}, L_{1}, ..., L_{\alpha}, L_{\alpha + 1}$, the binary symbols $S, V, T, <_{1}, <_{2}, ..., <_{\alpha}, <_{\alpha + 1}$ and the ternary symbols $F_{0}, F_{1}, ..., F_{\alpha}, G$. We aim to build an $\aleph_{\alpha + 1}$-Kurepa tree. 

We think of $L_{\alpha + 1}$ to be a set that corresponds to the ``levels" of the tree. We also demand $L_{\alpha + 1}$ to be linearly ordered by $<_{\alpha + 1}$ and 0 to be its minimum element. It is important to note that $L_{\alpha + 1}$ may or may not have a maximum element. Every element $a \in L_{\alpha + 1}$ that is not a maximum element has a successor $b$ that satisfies $S(a, b)$. We will denote the successor of $a$ by $S(a)$. The maximum element, if it exists, we call m and require that is not a successor. For every $a \in L_{\alpha+1}, V(a, \cdot)$ is the set of nodes at level $a$ and we assume that $V(a, \cdot)$ is disjoint from all the $L_0, L_1, \ldots, L_{\alpha + 1}$. If $a\neq b$, then $V(a,\cdot)$ is disjoint from $V(b,\cdot)$. If $V(a, x)$, we will say that $x$ is at the level $a$ and most of the times we will write $x \in V(a)$ instead.

$T$ is a tree ordering on $V = \bigcup_{a \in L_{\alpha+1}} V(a)$. If $T(x, y)$ holds, then $x$ is at some level strictly less than the level of $y$. If $y \in V(a)$ and $b < a$, there is some $x$ such that $x \in V(b)$ and $T(x, y)$. If $a$ is a limit, that is neither a successor nor 0, then two distinct elements in $V(a)$ cannot have the same predecessors. If m is the maximum element of $L_{\alpha + 1}, V(m)$ is the set of maximal branches through the tree. Both ``the height of $T$" and ``the height of $L_{\alpha + 1}$" refer to the order type of $(L_{\alpha + 1}, <_{\alpha + 1})$. We can also stipulate that the 
tree is pruned.

Our goal, now is to bound the size of each $L_{\beta}$ by $\aleph_{\beta}$. 
For the first level, we require that $\forall x (L_{0}(x) \leftrightarrow \bigvee_{n} x = c_{n})$. That gives us that $|L_{0}| = \aleph_{0}$.

Each $L_{\beta}, \beta = 1, 2, ..., \alpha$ is linearly ordered by $<_{\beta}$ and may or may not have maximum element. We will see that if $|L_{\beta}| = \aleph_{\beta}$, for $\beta$ successor, then there is no maximum. 

In order to bound the size of $L_{\beta + 1}$ by $\aleph_{\beta + 1}$, we bound the size of each initial segment by $\aleph_{\beta}$. Our treatment is slightly different for $\beta<\alpha$ than for $\beta=\alpha$.
The difference is how we treat the maximum elements.

\underline{Notice:} Defining the $F_{\alpha}(x, \cdot, \cdot)$, we demand that $x$ is not the maximum element of $L_{\alpha + 1}$. We don't have the same restriction for the rest of the $F_{\beta}$'s. That difference plays an important role throughout the rest of the paper.

Let $\beta < \alpha$. For every $x \in L_{\beta + 1}$ there is a surjection $F_{\beta}(x, \cdot, \cdot)$ from $L_{\beta}$ to $(L_{\beta + 1})_{\leq_{(\beta + 1)} x} = \{ b \in L_{\beta + 1} | b \leq_{(\beta + 1)} x \}$. This bounds the size of each initial segment $(L_{\beta + 1})_{\leq_{(\beta + 1)} x}, \beta < \alpha$ by $|L_{\beta}|$. Here, we can observe that if $L_{\beta + 1}$ had a maximum element, then its whole size would be $\leq \aleph_{\beta}$. So, if its size is equal to $\aleph_{\beta + 1}$, it cannot have maximum.

At limit stages we take $L_{\beta}$ as the union of the previous $L_{\gamma}$. The linear order on limit stages is not relevant to the linear orders in the previous stages.

Finally, for every $x \in L_{\alpha + 1}$, \underline{that is not the maximum element}, there is a surjection $F_{\alpha}(x, \cdot, \cdot)$ from $L_{\alpha}$ to $(L_{\alpha + 1})_{\leq_{(\alpha + 1)} x}$ and another surjection $G(x, \cdot, \cdot)$ from $L_{\alpha}$ to $V(x)$. This bounds the size of $(L_{\alpha + 1})_{\leq_{(\alpha + 1)} x}$ and the size of every $V(x)$, which is not maximal level, by $|L_{\alpha}|$.

We have to observe here, that although the upper bound for the size of $L_{\beta}$'s increases as $\beta$ increases, the actual size of $L_{\beta}$'s may decrease, since we just require $F_{\beta}$ to be a surjection.

This construction gives us that for all $\beta = 1, 2, ..., \alpha + 1, |L_{\beta}| \leq \aleph_\beta$ and for all non maximal levels $|V(x)| \leq \aleph_{\alpha}$.

So, our desired $\mathcal{L}_{\omega_1, \omega}$ sentence, $\psi_{\alpha}$ is the conjuction of all the above requirements. 

We want to emphasize here the fact that since well-orderings cannot be characterized by an $\mathcal{L}_{\omega_1, \omega}$ sentence, it is unavoidable that we will be working with non-well ordered trees. However, there is a connection between set-theoretic trees and the models of $\psi_{\alpha}$: For every model of our $\psi_{\alpha}$, if take a cofinal subset of $L_{\alpha + 1}$ with the corresponding levels, we form a tree in the usual set-theoretic notion. 

We can easily see that every $\aleph_{\alpha + 1}$-Kurepa tree gives rise to a model of $\psi_{\alpha}$, but there are models of $\psi_{\alpha}$, that do not code an $\aleph_{\alpha + 1}$-Kurepa tree. For example there is a model of $\psi_{\alpha}$ that codes a tree of height $\omega$, $\omega$- splitting, and which has $2^\omega$ many cofinal branches.
We also observe that every $(\kappa, \lambda, \mu)$-Kurepa tree, $\kappa \leq \aleph_{\alpha + 1}, \lambda \leq \aleph_{\alpha + 1}$ gives rise to a model of $\psi_{\alpha}$.

Our aim is to study the models that code $\aleph_{\alpha + 1}$-Kurepa trees. The dividing line 
is the size of $L_{\alpha + 1}$. By construction, every initial segment of $L_{\alpha + 1}$ has size at most $\aleph_{\alpha}$. If in addition $|L_{\alpha + 1}| = \aleph_{\alpha + 1}$, then we can embed $\omega_{\alpha + 1}$ cofinally into $L_{\alpha + 1}$. Hence, every model of $\psi_{\alpha}$ of size $\geq \aleph_{\alpha + 2}$ and for which $|L_{\alpha + 1}| = \aleph_{\alpha + 1}$, codes an $\aleph_{\alpha + 1}$-Kurepa tree.

Furthermore,  since $\psi_{\alpha}$ is an $\mathcal{L}_{\omega_{1}, \omega}$ sentence, for every model of $\psi_{\alpha}$ of size $\mu$, we get models  for every cardinality less than $\mu$. These models may or may not code trees (in the set-theoretic sense) and they may or may not contain all their maximal branches. For the rest of the paper, when we refer to a tree, we refer to a model of $\psi_{\alpha}$.

Let $\mathbf{K}_{\alpha}$ be the collection of all models of $\psi_{\alpha}$, equipped with the substructure relation. I.e. for $M, N \in \mathbf{K}_{\alpha}, M \prec_{\mathbf{K}_{\alpha}} N$ if $M \subset N$. Note that $(\mathbf{K}_{\alpha}, \prec_{\mathbf{K}_{\alpha}})$ is an Abstract Elementary Class, which we prove in Theorem \ref{AEC}.

\begin{lem}\label{help}
    If $M \prec_{\mathbf{K}_{\alpha}} N$, then
    \begin{enumerate}
        \item $L_{0}^{M} = L_{0}^{N}$
        \item $L_{1}^{M}$ is initial segment of $L_{1}^{N}$
        \item For $0 \leq \beta \leq \alpha$, if $|L_{\gamma}^{M}| = \aleph_{\gamma}$, for every $\gamma \leq \beta$, then $L_{\beta}^{M} = L_{\beta}^{N}$
        \item For $0 \leq \beta \leq \alpha$, if $|L_{\gamma}^{M}| = \aleph_{\gamma}$, for every $\gamma \leq \beta$, then $L_{\beta + 1}^{M}$ is an initial segment of $L_{\beta + 1}^{N}$
        \item If $|L_{\beta}^{M}| = \aleph_{\beta}$, for every $\beta \leq \alpha$, then $V^{M}(x) = V^{N}(x)$, for every non maximal $x \in L_{\alpha + 1}^{M}$
        \item the tree ordering is preserved
    \end{enumerate}
\end{lem}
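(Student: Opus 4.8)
The plan is to unwind the relation $M\prec_{\mathbf{K}_{\alpha}}N$ — which just says that $M$ is a $\tau$-substructure of $N$ and both model $\psi_{\alpha}$ — using the surjections $F_{\beta}$ and $G$ that $\psi_{\alpha}$ postulates. The single observation driving everything is elementary: a clause of $\psi_{\alpha}$ forces $F_{\beta}(x,\cdot,\cdot)$ (for $x$ in the relevant part) to be the graph of a \emph{total} function with domain $L_{\beta}$; since $M\subseteq N$, the function $F_{\beta}^{M}(x,\cdot,\cdot)$ is contained in $F_{\beta}^{N}(x,\cdot,\cdot)$ and both are total on their domains, so as soon as the domains coincide the two functions are equal and hence so are their images. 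Items (1) and (6) need no induction: (1) is immediate from the $\mathcal{L}_{\omega_{1},\omega}$-axiom $\forall x\,(L_{0}(x)\leftrightarrow\bigvee_{n}x=c_{n})$ and the fact that constants are interpreted identically in a substructure, while (6) is immediate from the definition of substructure, since $T^{M}=T^{N}\cap(M\times M)$.

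The core is a simultaneous induction on $\beta\le\alpha$ proving (3) and (4), with (2) emerging as the case $\beta=0$ of (4). At the base, $L_{0}^{M}=L_{0}^{N}$ is (1), and for each $x\in L_{1}^{M}$ the maps $F_{0}^{M}(x,\cdot,\cdot)$ and $F_{0}^{N}(x,\cdot,\cdot)$ are functions on $L_{0}^{M}=L_{0}^{N}$ with one extending the other, hence equal; so $(L_{1}^{M})_{\le x}=(L_{1}^{N})_{\le x}$ for every $x\in L_{1}^{M}$, which, since the order of $M$ is the restriction of that of $N$, is exactly the assertion that $L_{1}^{M}$ is an initial segment of $L_{1}^{N}$. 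For a successor step with $\beta+1\le\alpha$: the inductive hypothesis and the cardinality assumption give $L_{\beta}^{M}=L_{\beta}^{N}$, so the same argument applied to $F_{\beta}$ yields $(L_{\beta+1}^{M})_{\le x}=(L_{\beta+1}^{N})_{\le x}$ for all $x\in L_{\beta+1}^{M}$, i.e.\ $L_{\beta+1}^{M}$ is an initial segment of $L_{\beta+1}^{N}$; and to upgrade this to equality from $|L_{\beta+1}^{M}|=\aleph_{\beta+1}$ I would argue exactly as in Lemma~\ref{B(k)} — if some $z\in L_{\beta+1}^{N}\setminus L_{\beta+1}^{M}$ existed, then, the omitted part being a final segment, all of $L_{\beta+1}^{M}$ would lie inside $(L_{\beta+1}^{N})_{\le z}$, a surjective image of $L_{\beta}^{N}=L_{\beta}^{M}$, hence of size $\le\aleph_{\beta}$, contradicting $|L_{\beta+1}^{M}|=\aleph_{\beta+1}$. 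At a limit $\beta\le\alpha$ the inductive hypothesis gives $L_{\gamma}^{M}=L_{\gamma}^{N}$ for all $\gamma<\beta$, and the $\mathcal{L}_{\omega_{1},\omega}$-axiom $\forall x\,(L_{\beta}(x)\leftrightarrow\bigvee_{\gamma<\beta}L_{\gamma}(x))$ (legitimate as $\beta$ is countable) forces $L_{\beta}^{M}=\bigcup_{\gamma<\beta}L_{\gamma}^{M}=\bigcup_{\gamma<\beta}L_{\gamma}^{N}=L_{\beta}^{N}$.

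Item (5) will then follow from the $\beta=\alpha$ instance of (3): if $x\in L_{\alpha+1}^{M}$ is non-maximal in $M$ it is non-maximal in $N$ too (any $M$-witness $x<_{\alpha+1}z$ lies in $N$), so $\psi_{\alpha}$ supplies surjections $G^{M}(x,\cdot,\cdot)$ and $G^{N}(x,\cdot,\cdot)$ from $L_{\alpha}^{M}=L_{\alpha}^{N}$ onto $V^{M}(x)$ and $V^{N}(x)$, one contained in the other, so $V^{M}(x)=V^{N}(x)$. The same remark — non-maximal in $M$ implies non-maximal in $N$ — runs the $\beta=\alpha$ case of (4) through $F_{\alpha}$ for every non-maximal $x\in L_{\alpha+1}^{M}$. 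The genuinely delicate point, which I expect to be the main obstacle, is a maximum element $m$ of $L_{\alpha+1}^{M}$ when it exists: $\psi_{\alpha}$ does not postulate $F_{\alpha}(m,\cdot,\cdot)$, so one cannot read off $(L_{\alpha+1}^{N})_{<_{\alpha+1}m}$ from a surjection at $m$, and instead one has to control what $N$ places below $m$ using the non-maximal elements of $L_{\alpha+1}^{M}$ below $m$ together with the structural clauses of $\psi_{\alpha}$ on successors, limit levels and pruning (in particular that a maximum of $L_{\alpha+1}$ is necessarily a limit point of it). Away from this case, everything reduces to the bookkeeping above.
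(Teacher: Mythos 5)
Your treatment of (1), (2), (3), (6), of (4) for every $\beta$ with $\beta+1\le\alpha$, and of (5) is essentially the paper's own proof: the same simultaneous induction on $\beta$, the same observation that $F_{\beta}^{M}(x,\cdot,\cdot)$ and $F_{\beta}^{N}(x,\cdot,\cdot)$ are total single-valued surjections on the common domain $L_{\beta}^{M}=L_{\beta}^{N}$ and hence coincide (the paper phrases this negatively as ``$F_{\beta}^{M}(y,\cdot,\cdot)$ disagrees with $F_{\beta}^{N}(y,\cdot,\cdot)$, contradiction''), the same cardinality argument upgrading ``initial segment'' to equality (a new element $z$ would put $L_{\beta+1}^{M}$, of size $\aleph_{\beta+1}$, inside $(L_{\beta+1}^{N})_{\le z}$, a surjective image of $L_{\beta}^{N}=L_{\beta}^{M}$ of size $\aleph_{\beta}$), the same union argument at limits, and the same $G$-based argument for (5). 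In those parts there is nothing to object to.

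The genuine gap is exactly where you flag it, and you do not close it: statement (4) at $\beta=\alpha$ in the sub-case where the only element of $L_{\alpha+1}^{M}$ above the putative new element of $L_{\alpha+1}^{N}$ is the maximum $m$ of $L_{\alpha+1}^{M}$. You assert this can be handled ``using the structural clauses of $\psi_{\alpha}$ on successors, limit levels and pruning,'' but you give no argument, and it is not clear that one exists: since $\psi_{\alpha}$ postulates the surjection $F_{\alpha}(x,\cdot,\cdot)$ only for non-maximum $x$, none of the clauses you list obviously forbids $N$ from inserting, say, an $\omega$-sequence of new levels lying above every non-maximal level of $M$ and below $m$, with new nodes threaded beneath the old elements of $V^{M}(m)$; then $m$ is still a non-successor maximum in $N$, and the successor, limit-separation, level-size, pruning and substructure requirements on $S$, $T$, $F_{\alpha}$, $G$ all appear satisfiable, so the ``delicate case'' cannot be dispatched by bookkeeping alone. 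For comparison, the paper does not isolate this case at all: it applies the disagreement argument for $F_{\beta+1}$ uniformly, which is legitimate only when the witness $y$ above the new element can be chosen non-maximal (automatic for $\beta+1<\alpha$, since those $F$'s are postulated at every point, but not at $\beta+1=\alpha$). So you have put your finger on a real subtlety that the paper's proof of Lemma \ref{help} glosses over and that its later uses (e.g.\ via Corollary \ref{help1}) rely on; but as written, your proposal does not prove (4) at $\beta=\alpha$ when $L_{\alpha+1}^{M}$ has a maximum element, and it would need either an actual argument excluding such insertions below $m$ or a reformulation of that clause.
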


\begin{proof}
    (1) It is immediate from the definition

    (2) Towards contradiction, assume that there is a $y \in L_{1}^{M}$ and a $x \in L_{1}^{N} \setminus L_{1}^{M}$ with $x <_{1} y$. Then the function $F_{0}^{M}(y, \cdot, \cdot)$ defined on $M$ disagrees with the function $F_{0}^{N}(y, \cdot, \cdot)$ defined on $N$. Contradiction.

    (3) + (4) We  prove both (3) and (4) by induction on $0 \leq \beta \leq \alpha$. For $\beta = 0$, we have already proved that $L_{0}^{M} = L_{0}^{N}$ and $L_{1}^{M}$ is an initial segment of $L_{1}^{N}$. 
    
    Suppose we have the results for all ordinals $\leq \beta$, i.e. if $|L_{\gamma}^{M}| = \aleph_{\gamma}$, for $\gamma \leq \beta$, then we know that $L_{\beta}^{M} = L_{\beta}^{N}$ and $L_{\beta + 1}^{M}$ is an initial segment of $L_{\beta + 1}^{N}$. Now we have to show the results for $\beta + 1$, i.e. if $|L_{\gamma}^{M}| = \aleph_{\gamma}$, for $\gamma \leq \beta + 1$, then $L_{\beta + 1}^{M} = L_{\beta + 1}^{N}$ and that $L_{\beta + 2}^{M}$ is initial segment of $L_{\beta + 2}^{N}$. 
    \begin{itemize}
        \item Suppose that $L_{\beta + 1}^{M} \neq L_{\beta + 1}^{N}$, then we can find a $y \in L_{\beta + 1}^{N} \setminus L_{\beta + 1}^{M}$, such that $y \geq_{\beta + 1} x$, for all $x \in L_{\beta + 1}^{M}$. But then the function $F_{\beta}^{N}(y, \cdot, \cdot)$ is a surjection from $L_{\beta}^{N}$ to $(L_{\beta + 1}^{N})_{\leq_{(\beta + 1)} y} \supseteq L_{\beta + 1}^{M}$, which contradicts the fact that $|L_{\beta}^{N}| = \aleph_{\beta}$.
        \item Suppose that $L_{\beta + 2}^{M}$ is not an initial segment of $L_{\beta + 2}^{N}$. Then, there is a $y \in L_{\beta + 2}^{M}$ and a $x \in L_{\beta + 2}^{N} \setminus L_{\beta + 2}^{M}$ with $x <_{(\beta + 2)} y$. Then the function $F_{\beta + 1}^{M}(y, \cdot, \cdot)$ defined on $M$ disagrees with the function $F_{\beta + 1}^{N}(y, \cdot, \cdot)$ defined on $N$. Contradiction.
    \end{itemize}    

    In (3), we have to prove the result for limit ordinals. So, for $\beta$ limit, if we have the result for all ordinals $< \beta$, it is immediate that $L_{\beta}^{M} = L_{\beta}^{N}$, since $L_{\beta} = \bigcup_{\gamma < \beta} L_{\gamma}$ and $L_{\gamma}^{M} = L_{\gamma}^{N}$, for every $\gamma < \beta$.
  
    (5) Towards contradiction assume that there is a non maximal $x \in L_{\alpha + 1}^{M}$ and a $y \in V^{N}(x) \setminus V^{M}(x)$. Then, $G^{M}(x, \cdot, \cdot)$ disagrees with $G^{N}(x, \cdot, \cdot)$. Contradiction.

    (6) It is immediate from the definition.
    
\end{proof}

\begin{cor}\label{help1}
    If $M \prec_{\mathbf{K}_{\alpha}} N$, then
    \begin{enumerate}
        \item If $|L_{\beta}^{M}| = \aleph_{\beta}$ for every $\beta \leq \alpha$ and $L_{\alpha + 1}^{M} = L_{\alpha + 1}^{N}$, then $N$ differs from $M$ only in the maximal branches it contains.
        \item If $|L_{\beta}^{M}| = \aleph_{\beta}$ for every $\beta \leq \alpha + 1$ and $L_{\alpha + 1}^{N}$ is a strict end extension of $L_{\alpha + 1}^{M}$, then $L_{\alpha + 1}^{M}$ does not have a maximum element and $L_{\alpha + 1}^{N}$ is a one-point end-extension of $L_{\alpha + 1}^{M}$.
        \item If $|L_{\beta}^{M}| = \aleph_{\beta}$ for every $\beta \leq \alpha$, $L_{\alpha + 1}^{M}$ has a maximum element and $L_{\alpha + 1}^{N}$ is a strict end extension of $L_{\alpha + 1}^{M}$, then $|M| = \aleph_{\alpha}$.
    \end{enumerate}
\end{cor}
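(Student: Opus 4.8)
The plan is to read off all three items from Lemma~\ref{help} together with the two cardinality constraints hard-wired into $\psi_\alpha$: for every \emph{non-maximal} $x\in L_{\alpha+1}$ the map $F_\alpha(x,\cdot,\cdot)$ is a surjection of $L_\alpha$ onto the initial segment $(L_{\alpha+1})_{\le_{\alpha+1}x}$, and $G(x,\cdot,\cdot)$ is a surjection of $L_\alpha$ onto $V(x)$. In every case the hypothesis $|L_\beta^M|=\aleph_\beta$ for all $\beta\le\alpha$ feeds Lemma~\ref{help}(3)--(4): it gives $L_\beta^M=L_\beta^N$ for each $\beta\le\alpha$ — so in particular $L_\alpha^N=L_\alpha^M$ has size $\aleph_\alpha$ — and it gives that $L_{\alpha+1}^M$ is an initial segment of $L_{\alpha+1}^N$. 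I will also use that the universe of any model of $\psi_\alpha$ is exhausted by $\bigcup_{\beta\le\alpha+1}L_\beta\cup\bigcup_{x\in L_{\alpha+1}}V(x)$.

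For (1), we moreover have $L_{\alpha+1}^M=L_{\alpha+1}^N$, so Lemma~\ref{help}(5) gives $V^M(x)=V^N(x)$ for every non-maximal $x\in L_{\alpha+1}^M$. Hence an element of $N$ either lies in some $L_\beta^N=L_\beta^M\subseteq M$ with $\beta\le\alpha+1$, or in $V^N(x)=V^M(x)\subseteq M$ for a non-maximal $x$, or in $V^N(m)$ where $m=\max L_{\alpha+1}$ if that maximum exists; and $V^N(m)$ is exactly the set of maximal branches. So $M$ and $N$ can differ only in those branches (the tree order being inherited by Lemma~\ref{help}(6)), which is the assertion; if $L_{\alpha+1}$ has no maximum the two models simply coincide.

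For (2) and (3) the common observation is the following: if $L_{\alpha+1}^N$ strictly end-extends $L_{\alpha+1}^M$, then any $b\in L_{\alpha+1}^N$ which satisfies $b\ge_{\alpha+1}c$ for every $c\in L_{\alpha+1}^M$ and is not the top element of $L_{\alpha+1}^N$ is non-maximal in $L_{\alpha+1}^N$, so $F_\alpha^N(b,\cdot,\cdot)$ is a surjection of $L_\alpha^N$ (which has size $\aleph_\alpha$) onto $(L_{\alpha+1}^N)_{\le_{\alpha+1}b}\supseteq L_{\alpha+1}^M$; therefore $|L_{\alpha+1}^M|\le\aleph_\alpha$. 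In case (2), where $|L_{\alpha+1}^M|=\aleph_{\alpha+1}$: a maximum $m$ of $L_{\alpha+1}^M$ would be such a $b$ — it is not the top of $L_{\alpha+1}^N$, since the extension is strict and $L_{\alpha+1}^M$ is an initial segment — giving the contradiction $\aleph_{\alpha+1}\le\aleph_\alpha$; hence $L_{\alpha+1}^M$ has no maximum. Likewise, each $b\in L_{\alpha+1}^N\setminus L_{\alpha+1}^M$ must be the top element of $L_{\alpha+1}^N$, for otherwise it too is such a $b$ and we again get $\aleph_{\alpha+1}\le\aleph_\alpha$; since a linear order has at most one top element, $L_{\alpha+1}^N\setminus L_{\alpha+1}^M$ is a singleton whose element lies above all of $L_{\alpha+1}^M$, i.e. $L_{\alpha+1}^N$ is a one-point end-extension of $L_{\alpha+1}^M$. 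In case (3), where $L_{\alpha+1}^M$ has a maximum $m$: this $m$ is again not the top of $L_{\alpha+1}^N$, so $b=m$ gives $|L_{\alpha+1}^M|\le\aleph_\alpha$, and $G^N(m,\cdot,\cdot)$ is a surjection of $L_\alpha^N$ onto $V^N(m)\supseteq V^M(m)$, whence $|V^M(m)|\le\aleph_\alpha$; every other $V^M(x)$ already has size $\le\aleph_\alpha$ by construction. Plugging into the universe description yields $|M|\le\aleph_\alpha\cdot\aleph_\alpha=\aleph_\alpha$, and since $|M|\ge|L_\alpha^M|=\aleph_\alpha$ we get $|M|=\aleph_\alpha$.

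The only delicate point is the one-point conclusion in (2): a strict end-extension of a linear order need not add a least new element, so one cannot simply name ``the'' new point. Instead the $F_\alpha$-surjection bound forces \emph{every} new element to be the top of $L_{\alpha+1}^N$, and uniqueness of a top element does the rest. Everything else is routine bookkeeping with Lemma~\ref{help} and the built-in surjections of $\psi_\alpha$.
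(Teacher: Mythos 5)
Your proposal is correct and follows essentially the same route as the paper: item (1) is read off from Lemma~\ref{help}, while (2) and (3) use that a would-be maximum of $L_{\alpha+1}^{M}$ (or any non-top new point of $L_{\alpha+1}^{N}$) becomes non-maximal in $N$, so the surjections $F_\alpha^{N}$ and $G^{N}$ bound $(L_{\alpha+1}^{N})_{\le_{\alpha+1}x}$ and $V^{N}(m)$ by $\aleph_\alpha$. Your explicit handling of the one-point conclusion (every new element must be the top of $L_{\alpha+1}^{N}$) is just a slightly more detailed version of the paper's remark that two new elements yield the same contradiction.
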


\begin{proof}
    (1) It is immediate from Lemma \ref{help}.

    (2) First, observe that if $L_{\alpha + 1}^{M}$ had a maximum element $m$, then $|(L_{\alpha + 1}^{N})_{\leq_{(\alpha + 1)} m}| = \aleph_{\alpha + 1}$, which is a contradiction, since every initial segment of $L_{\alpha + 1}$ is bounded by $\aleph_{\alpha}$. We derive similar contradiction if we assume that there are at least two new elements in $L_{\alpha + 1}^{N}$.

    (3) First, we observe that $|L_{\alpha + 1}^{M}| \leq \aleph_{\alpha}$, from (2). Then we know from definition that $|V^{M}(x)| \leq \aleph_{\alpha}$, for every non maximal $x \in L_{\alpha + 1}^{M}$ and for the maximal level $V^{M}(m)$ the result comes from the fact that in the extension, $N$, $V^{N}(m)$ is not a maximal level any more, which implies that $|V^{N}(m)| \leq \aleph_{\alpha}$, since that is true for every non-maximal $V(x)$.

\end{proof}

\begin{lem}\label{maximal1}
    If $M$ is a maximal model of $\psi_\alpha$, then $|L_{\beta}^{M}| = \aleph_{\beta}$, for $\beta \leq \alpha$.
\end{lem}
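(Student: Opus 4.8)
The plan is a proof by contradiction. Suppose $M$ is a maximal model of $\psi_{\alpha}$ but $|L_{\beta}^{M}| \neq \aleph_{\beta}$ for some $\beta \leq \alpha$; since the construction already forces $|L_{\beta}^{M}| \leq \aleph_{\beta}$, this means $|L_{\beta}^{M}| < \aleph_{\beta}$, and I let $\beta$ be least with this property. The axiom $\forall x\,(L_{0}(x) \leftrightarrow \bigvee_{n} x = c_{n})$ gives $|L_{0}^{M}| = \aleph_{0}$, so $\beta \geq 1$; and $\beta$ cannot be a limit, for at limit stages $L_{\beta}^{M} = \bigcup_{\gamma < \beta} L_{\gamma}^{M}$ while minimality gives $|L_{\gamma}^{M}| = \aleph_{\gamma}$ for every $\gamma < \beta$, whence $|L_{\beta}^{M}| \geq \sup_{\gamma < \beta} \aleph_{\gamma} = \aleph_{\beta}$. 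Hence $\beta = \delta + 1$ for some $\delta$, with $|L_{\gamma}^{M}| = \aleph_{\gamma}$ for every $\gamma \leq \delta$; and since $|L_{\delta+1}^{M}| < \aleph_{\delta+1}$ we get the key inequality $|L_{\delta+1}^{M}| \leq \aleph_{\delta} = |L_{\delta}^{M}|$.

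Next I build a proper $\mathbf{K}_{\alpha}$-extension $N$ of $M$ by adjoining a single new point $m'$ as a top element of $(L_{\delta+1}, <_{\delta+1})$, leaving every other sort, every $V(a)$, the tree order $T$, and every order relation except $<_{\delta+1}$ exactly as in $M$. Only the functions adjacent to level $\delta+1$ need attention. Since the initial segments of old elements of $L_{\delta+1}$ are unchanged, I keep $F_{\delta}^{M}(x,\cdot,\cdot)$ for $x \in L_{\delta+1}^{M}$; and by the key inequality I may fix a surjection of $L_{\delta}^{M}$ onto $L_{\delta+1}^{M} \cup \{m'\} = (L_{\delta+1}^{N})_{\leq m'}$ and take it as $F_{\delta}^{N}(m',\cdot,\cdot)$. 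For each $y$ in the level directly above $L_{\delta+1}$ --- that is, $y \in L_{\delta+2}^{M}$ when $\delta+1 < \alpha$, and $y \in L_{\alpha+1}^{M}$ when $\delta+1 = \alpha$ --- I extend $F_{\delta+1}(y,\cdot,\cdot)$ by sending the new domain point $m'$ to $y$ itself, and in the case $\delta+1 = \alpha$ I likewise extend $G(y,\cdot,\cdot)$, for each non-maximal $y$, by sending $m'$ to some fixed element of $V^{M}(y)$; this set is nonempty, because $F_{\alpha}^{M}(y,\cdot,\cdot)$ surjects $L_{\alpha}^{M}$ onto $(L_{\alpha+1}^{M})_{\leq y} \ni y$, so $L_{\alpha}^{M} \neq \emptyset$, and $G^{M}(y,\cdot,\cdot)$ surjects $L_{\alpha}^{M}$ onto $V^{M}(y)$.

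Finally I would verify $N \models \psi_{\alpha}$; since $m' \notin M$ this makes $M \prec_{\mathbf{K}_{\alpha}} N$ a proper extension, contradicting the maximality of $M$. Every conjunct of $\psi_{\alpha}$ not mentioning level $\delta+1$ or one of $F_{\delta}, F_{\delta+1}, G$ survives verbatim, as $m'$ lies in no other sort and in no $V(a)$. The cardinality bounds hold, since $|L_{\delta+1}^{N}| \leq \aleph_{\delta} < \aleph_{\delta+1}$ and every initial segment of $L_{\delta+1}^{N}$ is still of size $\leq \aleph_{\delta}$, the one below $m'$ being witnessed by $F_{\delta}^{N}(m',\cdot,\cdot)$. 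And enlarging the domain of a surjection by points sent into its existing range keeps it total, single-valued and surjective, so the defining clauses of $F_{\delta+1}^{N}(y,\cdot,\cdot)$ and $G^{N}(y,\cdot,\cdot)$ remain satisfied. I expect the main --- indeed the only --- difficulty to be precisely this verification bookkeeping, and in particular getting the boundary case $\delta+1 = \alpha$ right: there the level above $L_{\delta+1}$ is $L_{\alpha+1}$, so one must also extend the surjections $F_{\alpha}(y,\cdot,\cdot)$ and $G(y,\cdot,\cdot)$, which --- per the \underline{Notice} after the construction --- are imposed only for non-maximal $y \in L_{\alpha+1}$, and one must check, as above, that the levels $V^{M}(y)$ involved are nonempty.
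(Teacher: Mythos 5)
Your overall strategy is exactly the paper's: take the least $\beta$ with $|L_{\beta}^{M}| < \aleph_{\beta}$, rule out $\beta = 0$ and $\beta$ limit via the axiom $L_{\beta} = \bigcup_{\gamma<\beta} L_{\gamma}$, write $\beta = \delta+1$, and contradict maximality by adjoining a single new top element $m'$ to $L_{\delta+1}$, using the inequality $|L_{\delta+1}^{M}| \leq \aleph_{\delta} = |L_{\delta}^{M}|$ to define the new surjection $F_{\delta}(m',\cdot,\cdot)$ and extending the surjections whose domain gains $m'$. Your handling of the boundary case $\delta+1=\alpha$ (extending $F_{\alpha}(y,\cdot,\cdot)$ and $G(y,\cdot,\cdot)$ only for non-maximal $y\in L_{\alpha+1}$, with the check that $V^{M}(y)\neq\emptyset$) is correct and matches the paper's intent.

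There is, however, one step that fails as written, and it bites precisely when $\alpha$ is infinite: you stipulate that every sort other than $L_{\delta+1}$ is left exactly as in $M$, and you justify the verification by saying that ``$m'$ lies in no other sort.'' But $\psi_{\alpha}$ contains, for every limit $\lambda \leq \alpha$, the conjunct stating that $L_{\lambda}$ is the union of the $L_{\gamma}$, $\gamma<\lambda$ --- the very axiom you used to exclude the limit case. So if there is a limit ordinal $\lambda$ with $\delta+1 < \lambda \leq \alpha$, your structure $N$ satisfies $m' \in L_{\delta+1}^{N} \setminus L_{\lambda}^{N}$ and hence $N \not\models \psi_{\alpha}$: you have not produced an extension in $\mathbf{K}_{\alpha}$, and the claim that all conjuncts not mentioning $F_{\delta}, F_{\delta+1}, G$ ``survive verbatim'' is false. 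The repair is routine and is exactly what the paper's proof does: put $m'$ into every limit $L_{\lambda}$ with $\delta+1 < \lambda \leq \alpha$, place it anywhere in $<_{\lambda}$ (the linear order at limit stages is unconstrained by the lower ones and no $F$ bounds initial segments of a limit level), and extend the surjections whose domain is such an $L_{\lambda}$ --- the functions $F_{\lambda}(x,\cdot,\cdot)$ for $x\in L_{\lambda+1}$, and, when $\alpha$ itself is such a limit, also $F_{\alpha}(x,\cdot,\cdot)$ and $G(x,\cdot,\cdot)$ for non-maximal $x\in L_{\alpha+1}$ --- by assigning $m'$ arbitrary images; no higher successor level is affected. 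With this amendment your argument coincides with the paper's proof.
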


\begin{proof}
    Towards contradiction take the least $\beta$, such that $|L_{\beta}^{M}| < \aleph_{\beta}$. If $\beta$ were a limit ordinal and since $L_{\beta}$ is the union of all previous $L_{\gamma}$'s, there should be a smaller ordinal satisfying $|L_{\gamma}^{M}| < \aleph_{\gamma}$. Therefore, $\beta$ is a successor ordinal and assume $\beta = \gamma + 1$. Then $|L_{\beta}^{M}| \leq \aleph_{\gamma}$, $|L_{\gamma}^{M}| = \aleph_{\gamma}$ and  we could extend $L_{\beta}^{M}$ by adding an element, $y$, greater than all the existing elements. Notice that, by Lemma \ref{help} (3), we cannot add any new element in the previous $L_{\delta}$'s, $\delta<\beta$, but we can add one in $L_{\beta}$, since the only thing we have to check is how we define the functions $F_{\beta}(x, \cdot, \cdot)$'s, for all $x \in L_{\beta + 1}$ and the function $F_{\gamma}(y, \cdot, \cdot)$ (since $y$ is greater than all the previous elements, we do not need to check the other $F_{\gamma}(x, \cdot, \cdot)$'s). 
    
    Defining the $F_{\beta}(x, \cdot, \cdot)$'s is easily done, since we only demand them to be surjections and we just added an element in the domain. For the function $F_{\gamma}(y, \cdot, \cdot)$, we just added one element in $L_{\beta}$, so we can find a surjection from $L_{\gamma}$ to $(L_{\beta})_{\leq_{\beta} y}$ Finally, for the $L_{\delta}$'s, for $\delta > \beta$, we do not add new elements, except when $\delta$ is a limit. So, every $L_{\delta}$ with $\delta > \beta$ limit,  contains the new element $y$. Then the linear order $<_{\delta}$ and the functions $F_{\delta}$ are easily defined.
\end{proof}

\begin{cor}\label{maximal2}
    Assume $\kappa \geq \aleph_{\alpha + 1}$, $M \in \mathbf{K}_{\alpha}$ is a tree that contains all its $\kappa$ cofinal branches and $|L_{\beta}^{M}| = \aleph_{\beta}$, for $\beta \leq \alpha$. Then $M$ is maximal.
\end{cor}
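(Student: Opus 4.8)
The plan is to prove maximality directly: suppose $M \prec_{\mathbf{K}_\alpha} N$ and show $N=M$. Since we are assuming $|L_\beta^M|=\aleph_\beta$ for all $\beta\le\alpha$, Lemma~\ref{help}(3),(4),(5) applies and gives: $L_\beta^M=L_\beta^N$ for $\beta\le\alpha$; $L_{\alpha+1}^M$ is an initial segment of $L_{\alpha+1}^N$; and $V^M(x)=V^N(x)$ for every non-maximal $x\in L_{\alpha+1}^M$. The hypothesis that $M$ contains all $\kappa$ of its cofinal branches means that $L_{\alpha+1}^M$ has a maximum element $m$, that $V^M(m)$ is the set of all cofinal branches of the subtree of $M$ below $m$, and that $|V^M(m)|=\kappa$. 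Note that $m$ is a limit element of $(L_{\alpha+1}^M,<_{\alpha+1})$: it is not a successor (by the axioms governing $m$), and it is not $0$, since otherwise there would be no levels below it and hence no cofinal branches at all. In particular $|M|\ge|V^M(m)|=\kappa\ge\aleph_{\alpha+1}$.

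Since $L_{\alpha+1}^M$ is an initial segment of $L_{\alpha+1}^N$, either the two coincide or $L_{\alpha+1}^N$ is a strict end-extension of $L_{\alpha+1}^M$. I would dispose of the strict case first: there Corollary~\ref{help1}(3) applies (its hypotheses --- $|L_\beta^M|=\aleph_\beta$ for $\beta\le\alpha$, $L_{\alpha+1}^M$ has a maximum, $L_{\alpha+1}^N$ strictly end-extends $L_{\alpha+1}^M$ --- all hold) and yields $|M|=\aleph_\alpha$, contradicting $|M|\ge\aleph_{\alpha+1}$.

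Hence $L_{\alpha+1}^N=L_{\alpha+1}^M$, and then Corollary~\ref{help1}(1) says that $N$ differs from $M$ only in the maximal branches it contains; since the universe of any model of $\psi_\alpha$ is covered by the $L_\beta$'s and the $V(x)$'s, and all of these except possibly $V(m)$ already agree in $M$ and $N$, it suffices to show $V^N(m)=V^M(m)$. Suppose toward a contradiction that $y\in V^N(m)\setminus V^M(m)$. By the tree axioms, the set of $T^N$-predecessors of $y$ is a chain meeting every level $<_{\alpha+1}m$; since $V^N(x)=V^M(x)$ for all non-maximal $x$ and the tree ordering is preserved (Lemma~\ref{help}(6)), this chain $B$ is a cofinal branch of the subtree of $M$ below $m$. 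Because $M$ contains \emph{all} such branches, there is $y'\in V^M(m)$ whose $T$-predecessor chain equals $B$, and $y'\ne y$ since $y\notin V^M(m)$. But then $y$ and $y'$ are distinct elements of $V^N(m)$ with the same $T^N$-predecessors, while $m$ is a limit element of $L_{\alpha+1}^N$ --- contradicting the axiom that at a limit level no two distinct nodes have the same predecessors. Therefore $V^N(m)=V^M(m)$, so $M$ and $N$ have the same universe, and as $M\subseteq N$ this gives $M=N$.

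The bookkeeping with Lemma~\ref{help} and Corollary~\ref{help1} is routine; the point that requires care --- and the main, if mild, obstacle --- is the last step, verifying both that a hypothetical new top-level node of $N$ genuinely induces a cofinal branch of the \emph{unchanged} subtree below $m$ (so that ``$M$ contains all cofinal branches'' can be invoked) and that $m$ is a genuine limit level (so that the no-repeated-predecessors axiom is available).
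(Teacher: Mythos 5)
Your proof is correct and follows essentially the same route as the paper's: the hypotheses $|L_\beta^M|=\aleph_\beta$ ($\beta\le\alpha$), the existence of the maximum $m$, and $|M|\ge\aleph_{\alpha+1}$ rule out a strict end-extension of $L_{\alpha+1}$ via Corollary~\ref{help1}(3), and then Corollary~\ref{help1}(1) reduces everything to the maximal branches, which cannot grow since $M$ already contains them all. Your only addition is to spell out the final contradiction (that a new element of $V(m)$ would duplicate the predecessors of an existing branch at the limit level $m$), which the paper leaves implicit.
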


\begin{proof}
    Assume $M$ is not maximal. Then there exists $N \in \mathbf{K}_{\alpha}$ with $M \prec_{\mathbf{K}_{\alpha}} N$. Since $|L_{\beta}^{M}| = \aleph_{\beta}$ for every $\beta \leq \alpha$, $L_{\alpha + 1}^{M}$ has a maximum element and $|M| \geq \aleph_{\alpha + 1}$, then from Corollary \ref{help1}, $L_{\alpha + 1}^{N}$ cannot be a strict end extension of $L_{\alpha + 1}^{M}$. Therefore, $L_{\alpha + 1}^{M} = L_{\alpha + 1}^{N}$. By Corollary \ref{help1}, again, $M, N$ differ only on the maximal branches they contain. But $M$ contains all the maximal branches through the tree. Contradiction.
\end{proof}

\begin{lem}\label{lastdetails}
    Suppose $\alpha < \omega$. If $|L_{\beta}| = \aleph_{\beta}$, for some $\beta \leq \alpha + 1$, then $|L_{\gamma}| = \aleph_{\gamma}$, for every $\gamma \leq \beta$.
\end{lem}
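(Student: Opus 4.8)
The plan is to peel off one level at a time from the top, reducing everything to the single step ``if $\gamma+1\le\beta$ and $|L_{\gamma+1}|=\aleph_{\gamma+1}$, then $|L_\gamma|=\aleph_\gamma$'', and then iterating it finitely often down to the trivial identity $|L_0|=\aleph_0$. Since $\alpha<\omega$, every ordinal $\le\beta$ is a natural number, so the descent $\beta,\beta-1,\dots,0$ is finite and no limit stage intervenes; this is the only place the hypothesis $\alpha<\omega$ is used.

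The engine of the argument is an elementary order-theoretic fact that I would isolate first: if $(L,<)$ is a linear order in which $|\{y:y\le x\}|\le\mu$ for every $x\in L$, then $|L|\le\mu^{+}$. To prove it I would suppose $|L|\ge\mu^{++}$ and recursively choose a strictly $<$-increasing sequence $\langle x_i:i<\mu^{+}\rangle$ in $L$: at stage $i$ the downward closed set $\bigcup_{j<i}\{y:y\le x_j\}$ has size at most $\mu\cdot|i|\le\mu$ (the finite-$\mu$ case being even easier), hence is a proper subset of $L$ and admits an element above all earlier $x_j$. Then $\bigcup_{i<\mu^{+}}\{y:y\le x_i\}$ has size $\le\mu^{+}<|L|$, so some $z\in L$ lies above all the $x_i$, whence $\{y:y\le z\}$ contains $\mu^{+}$ distinct elements, contradicting $|\{y:y\le z\}|\le\mu$.

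Next I would record that $|L_{\gamma+1}|\le|L_\gamma|^{+}$ for every $0\le\gamma\le\alpha$. Indeed, for each $x\in L_{\gamma+1}$ — and, when $\gamma=\alpha$, for each such $x$ other than a possible maximum of $L_{\alpha+1}$ — the map $F_\gamma(x,\cdot,\cdot)$ surjects $L_\gamma$ onto $(L_{\gamma+1})_{\le_{(\gamma+1)}x}$, so every such initial segment has size $\le|L_\gamma|$; applying the order-theoretic fact to $L_{\gamma+1}$ (or, when $\gamma=\alpha$ and $L_{\alpha+1}$ has a maximum $m$, to $L_{\alpha+1}\setminus\{m\}$) gives $|L_{\gamma+1}|\le|L_\gamma|^{+}$. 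Combining this with the bound $|L_\delta|\le\aleph_\delta$ for all $\delta\le\alpha+1$ obtained from the construction, the key step is immediate: if $|L_{\gamma+1}|=\aleph_{\gamma+1}$ with $1\le\gamma+1\le\alpha+1$, then $\aleph_{\gamma+1}=|L_{\gamma+1}|\le|L_\gamma|^{+}\le\aleph_\gamma^{+}=\aleph_{\gamma+1}$, forcing $|L_\gamma|^{+}=\aleph_\gamma^{+}$ and hence, since $|L_\gamma|\le\aleph_\gamma$, $|L_\gamma|=\aleph_\gamma$. Iterating from $\gamma+1=\beta$ down to $1$ yields $|L_\gamma|=\aleph_\gamma$ for every $1\le\gamma\le\beta$, and $|L_0|=\aleph_0$ holds by definition.

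The only subtlety — and the point I expect to have to be careful about — is the behaviour at a possible maximum of $L_{\alpha+1}$, where $F_\alpha$ is not required to be defined: passing to $L_{\alpha+1}\setminus\{m\}$ is legitimate precisely because the ``not a successor'' stipulation on $m$ keeps that set without a top element, so the order-theoretic fact still applies and still bounds $|L_{\alpha+1}|$ by $|L_\alpha|^{+}$. Everything else is routine cardinal arithmetic.
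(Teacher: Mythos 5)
Your proof is correct and is essentially the paper's argument: both hinge on the inequality $|L_{\gamma+1}|\le|L_\gamma|^{+}$ obtained from the surjections $F_\gamma(x,\cdot,\cdot)$ onto initial segments, the paper running the finite induction upward from the least $\gamma$ with $|L_\gamma|<\aleph_\gamma$ to contradict $|L_\beta|=\aleph_\beta$, while you run the contrapositive descent downward from $\beta$. Your additional care (the explicit proof that a linear order with all initial segments of size $\le\mu$ has size $\le\mu^{+}$, and the handling of a possible maximum of $L_{\alpha+1}$) merely fills in details the paper asserts without proof.
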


\begin{proof}
    Suppose not. Then take $\gamma \leq \beta$ to be the least ordinal, such that $|L_{\gamma}| < \aleph_{\gamma}$. Then we can prove that $|L_{\delta}| < \aleph_{\delta}$ for every $\delta \geq \gamma$. If we prove that, we have come to a contradiction, since we know $|L_{\beta}| = \aleph_{\beta}$. We do that by induction on $\delta$ (note that it is a finite induction, since we just have finite ordinals). We have the result for $\gamma$. Now, suppose we have the result for some $\delta$ and we will prove it for $\delta + 1$. By the definition of the function $F_{\delta}$, we know that $|L_{\delta + 1}| \leq |L_{\delta}|^{+} < \aleph_{\delta}^{+} = \aleph_{\delta + 1}$.
\end{proof}

\underline{Observation:} We do not have the same result for $\alpha \geq \omega$, since we cannot apply the same induction argument for limit stages. For example we can have $\forall n > 0, |L_{n}| = \aleph_{n - 1}$, but we will have $|L_{\omega}| = \aleph_{\omega}$, since it is the union of all the previous $L_{n}$'s. 

\begin{lem}\label{height}
    If $M \in \mathbf{K}_{\alpha}$ and $|M| = \kappa > 2^{\aleph_{\alpha}}$, then $|L_{\alpha + 1}| = \aleph_{\alpha + 1}$.
\end{lem}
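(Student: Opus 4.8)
The plan is to argue by contradiction: suppose $|L_{\alpha+1}^{M}| \ne \aleph_{\alpha+1}$ and show that this forces $|M| \le 2^{\aleph_{\alpha}}$, contradicting $|M| = \kappa > 2^{\aleph_{\alpha}}$. The inputs are all read off from the construction of $\psi_{\alpha}$: the universe of $M$ is exhausted by $L_{0}^{M} \cup \dots \cup L_{\alpha+1}^{M}$ together with $V^{M} = \bigcup_{a \in L_{\alpha+1}^{M}} V^{M}(a)$; each $|L_{\beta}^{M}| \le \aleph_{\beta}$, so $\bigl|\bigcup_{\beta \le \alpha+1} L_{\beta}^{M}\bigr| \le \aleph_{\alpha} + |L_{\alpha+1}^{M}|$; and $|V^{M}(x)| \le \aleph_{\alpha}$ for every non-maximal $x \in L_{\alpha+1}^{M}$. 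Since there is no cardinal strictly between $\aleph_{\alpha}$ and $\aleph_{\alpha+1}$, the failure of $|L_{\alpha+1}^{M}| = \aleph_{\alpha+1}$, together with the built-in bound $|L_{\alpha+1}^{M}| \le \aleph_{\alpha+1}$, gives $|L_{\alpha+1}^{M}| \le \aleph_{\alpha}$.

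Next I would split on whether $L_{\alpha+1}^{M}$ has a maximum element. If it does not, then every level is non-maximal, so $|V^{M}| \le |L_{\alpha+1}^{M}| \cdot \aleph_{\alpha} \le \aleph_{\alpha}$, hence $|M| \le \aleph_{\alpha}$. If $L_{\alpha+1}^{M}$ has a maximum element $m$, let $V'$ be the union of the non-maximal levels; the same count gives $|V'| \le \aleph_{\alpha}$, and it remains to bound $|V^{M}(m)|$, the set of maximal branches. Here I would use that $m$ is neither a successor nor $0$, so the limit-level clause of $\psi_{\alpha}$ forces distinct elements of $V^{M}(m)$ to have distinct sets of $T^{M}$-predecessors; thus $b \mapsto \{x \in V' : T^{M}(x,b)\}$ embeds $V^{M}(m)$ into $\mathcal{P}(V')$, so $|V^{M}(m)| \le 2^{|V'|} \le 2^{\aleph_{\alpha}}$. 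In either case $|M| \le \aleph_{\alpha} + 2^{\aleph_{\alpha}} = 2^{\aleph_{\alpha}} < \kappa$, the desired contradiction, so $|L_{\alpha+1}^{M}| = \aleph_{\alpha+1}$.

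I expect the only non-routine point to be the bound on the number of maximal branches in the case where $L_{\alpha+1}^{M}$ has a top element; this is precisely where the features ``$V(m)$ is the set of maximal branches'' and ``distinct nodes at a limit level have distinct predecessor sets'' of the construction are used, and it is what makes $2^{\aleph_{\alpha}}$ — rather than $\aleph_{\alpha}$ — the right threshold. Everything else is bookkeeping with the cardinality constraints $\psi_{\alpha}$ was designed to impose, together with $\aleph_{0} \cdot \aleph_{\alpha} = \aleph_{\alpha}$, which keeps $\bigcup_{\beta \le \alpha} L_{\beta}^{M}$ of size $\le \aleph_{\alpha}$ even when $\alpha$ is infinite.
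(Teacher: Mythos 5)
Your proof is correct and takes essentially the same route as the paper: assume $|L_{\alpha+1}^{M}| \leq \aleph_{\alpha}$, bound the model minus the maximal level by $\aleph_{\alpha}$, and bound the maximal level by $2^{\aleph_{\alpha}}$, contradicting $|M| > 2^{\aleph_{\alpha}}$. The only (immaterial) difference is that the paper states the last bound as $\aleph_{\alpha}^{\aleph_{\alpha}} = 2^{\aleph_{\alpha}}$, counting branches as functions on the levels, while you inject $V^{M}(m)$ into $\mathcal{P}(V')$ via predecessor sets; both rest on the same clause that distinct elements at the limit level $m$ have distinct predecessors.
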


\begin{proof}
First, we observe that for every $\beta \leq \alpha, |L_{\beta}| \leq \aleph_{\alpha}$, and suppose that $|L_{\alpha + 1}| \leq \aleph_{\alpha}$. Then our model codes a tree with height at most $\aleph_{\alpha}$ and every non maximal level has at most $\aleph_{\alpha}$-many elements. That means that the whole model without the maximal level cannot exceed $\aleph_{\alpha}$. It also means that the maximal level has at most $\aleph_{\alpha}^{\aleph_{\alpha}} = 2^{\aleph_{\alpha}}$-many elements. Contradiction.
\end{proof}

In the previous Lemma, we can see that if $\kappa \leq 2^{\aleph_{\alpha}}$, then it is consistent that we can find a model of size $\kappa$ with $|L_{\alpha + 1}| \leq \aleph_{\alpha}$. For example, if we assume that $2^{< \aleph_{\alpha}} = \aleph_{\alpha}$ we can take the tree $(2^{\leq \aleph_{\alpha}}, \subset)$, with only $\kappa$ of its maximal branches.

\section{An Abstract Elementary Class}

\begin{thm}\label{AEC}
($\mathbf{K}_{\alpha}, \prec_{\mathbf{K}_{\alpha}}$) is an Abstract Elementary Class (AEC) with countable Lowenheim-Skolem number.
\end{thm}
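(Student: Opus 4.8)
The plan is to check the defining clauses of an AEC one at a time: closure under isomorphism; that $\prec_{\mathbf{K}_\alpha}$ is a partial order refining substructure; the coherence axiom; the Tarski--Vaught chain axioms; and the existence of a L\"owenheim--Skolem number, for which I will show $\mathrm{LS}(\mathbf{K}_\alpha)=\aleph_0$ works. The first three are immediate. Since $\psi_\alpha$ is an $\mathcal{L}_{\omega_1,\omega}$-sentence, $\mathbf{K}_\alpha$ is closed under isomorphism, and isomorphisms of $\tau$-structures carry substructures to substructures; the substructure relation is reflexive, transitive and antisymmetric and, by definition, $\prec_{\mathbf{K}_\alpha}$ refines it; and coherence holds trivially because $\prec_{\mathbf{K}_\alpha}$ \emph{is} $\subseteq$ restricted to $\mathbf{K}_\alpha$, so from $M_0\subseteq M_1$ with $M_0,M_1\in\mathbf{K}_\alpha$ we get $M_0\prec_{\mathbf{K}_\alpha}M_1$ at once. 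I will note here that $\tau$ has only relation symbols and constants (in particular $S$, the $F_\beta$'s and $G$ are relations required to be graphs of (partial) functions), so ``substructure'' just means a subset containing the constants with all relations inherited.

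The main work is the chain axioms. Let $(M_i)_{i<\delta}$ be $\prec_{\mathbf{K}_\alpha}$-increasing and $M=\bigcup_{i<\delta}M_i$; once $M\in\mathbf{K}_\alpha$ is established the other two requirements are trivial ($M_i\subseteq M$ with all three in $\mathbf{K}_\alpha$, and $M_i\subseteq N\in\mathbf{K}_\alpha$ for all $i$ gives $M\subseteq N$), so the content is $M\models\psi_\alpha$. My strategy is to observe that $\psi_\alpha$ is a countable conjunction of clauses each equivalent to a $\Pi_2$-sentence of $\mathcal{L}_{\omega_1,\omega}$, i.e. a block of universal quantifiers over a formula built from quantifier-free formulas using $\bigwedge$, $\bigvee$ and $\exists$; such sentences are preserved under unions of $\subseteq$-chains by the standard argument (a finite tuple witnessing a failure in $M$ already lies in some $M_i$, and quantifier-free formulas, hence $\bigwedge/\bigvee$'s of them, are absolute between $M_i$ and $M$). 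Most clauses are visibly of this form: the order axioms for the $<_\beta$'s, ``$0$ is the minimum of $L_{\alpha+1}$'', ``the $V(a,\cdot)$'s are pairwise disjoint and disjoint from the $L_\gamma$'s'', and ``$T$ is a tree order with $T$-comparable nodes on strictly increasing levels'' are universal; ``every non-maximal $x\in L_{\alpha+1}$ has an $S$-successor'', ``below a node at level $a$ there is a node at each level $b<_{\alpha+1}a$'', ``the tree is pruned'', ``for $x\in L_{\beta+1}$ the relation $F_\beta(x,\cdot,\cdot)$ is a total surjection of $L_\beta$ onto $(L_{\beta+1})_{\le_{\beta+1}x}$'' and the analogous clauses for $G$, ``$L_0=\{c_n:n\in\omega\}$'', and ``$L_\beta=\bigcup_{\gamma<\beta}L_\gamma$ for limit $\beta$'' all go routinely into $\forall\exists$-form. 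The clauses needing a second look are those whose face value puts an extremal notion in an antecedent---``the maximum element, if it exists, is not a successor'', and the distinctness-of-predecessors condition at limit levels---but these become $\Pi_2$ after rephrasing: the former as ``every $S$-successor has something strictly $<_{\alpha+1}$-above it'', and the latter using that ``being a limit level'' means ``being $\ne 0$ with no $S$-predecessor'', which is universal and hence sits harmlessly in an antecedent. Thus each clause, and so $\psi_\alpha$, is preserved, and $M\models\psi_\alpha$.

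For L\"owenheim--Skolem with $\mathrm{LS}(\mathbf{K}_\alpha)=\aleph_0$, fix $M\models\psi_\alpha$ and $A\subseteq M$. For each existential requirement among the countably many clauses of $\psi_\alpha$ I would pick a witnessing function on $M$: for instance $x\mapsto$ an $S$-successor of $x$ for non-maximal $x$; $(x,y)\mapsto$ the unique $z$ with $F_\beta(x,y,z)$; $(x,z)\mapsto$ some $y\in L_\beta$ with $F_\beta(x,y,z)$ when $z\le_{\beta+1}x$; the analogous functions for $G$, for ``pruned'', and for the clause populating every level below a given node; and a function giving a predecessor at level $b$ for a node at level $a>_{\alpha+1}b$. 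Let $N$ be the closure of $A\cup\{0\}\cup\{c_n:n\in\omega\}$ under this countable family; then $|N|\le|A|+\aleph_0$, $A\subseteq N\subseteq M$, and $N$ is a substructure since $\tau$ has no function symbols. Then $N\models\psi_\alpha$: the universal clauses pass down to substructures; the $\forall\exists$-clauses hold because $N$ is closed under the chosen witnesses; $L_0^N=\{c_n:n\in\omega\}$ since all $c_n$ were included and $L_0^N\subseteq L_0^M$; and the surjectivity clauses hold because witnesses for surjectivity were included. Hence $N\prec_{\mathbf{K}_\alpha}M$, so $\aleph_0$ is a L\"owenheim--Skolem number (and it cannot be finite, as $L_0$ is infinite in every model).

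The hard part is the chain axiom, and specifically recognizing that \emph{every} clause of $\psi_\alpha$---including those whose natural wording invokes ``the maximum element'' or ``a limit level''---is equivalent to a genuine $\Pi_2$ $\mathcal{L}_{\omega_1,\omega}$-sentence, so that the standard chain-preservation argument applies. If one prefers to avoid the rephrasings, these few clauses can instead be handled directly: an extremal element of $M$ (a maximum of $L_{\alpha+1}$, or a limit level) is already extremal in a cofinal segment of the chain, and the constraints imposed on such elements are universal, hence persist to $M$.
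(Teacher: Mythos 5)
Your handling of isomorphism closure, coherence, and the union-of-chains axiom is fine (indeed more detailed than the paper, which dismisses these as easy), and the $\Pi_2$ rephrasings you give are correct for that direction, because there the relevant antecedents (``$a$ is a limit'', ``$x$ is not the maximum'') pass from the union down to the members of the chain. The genuine gap is in the L\"owenheim--Skolem argument, which is exactly where the paper's proof does all of its work. The clause ``if $a$ is a limit level, two distinct elements of $V(a)$ cannot have the same $T$-predecessors'' is not of the shape your Skolem-closure scheme handles: its antecedent ``$a$ has no $S$-predecessor'' is universal, so it can become \emph{true} in the hull while being false in the ambient model; the ambient model then supplies no witness for you to close under, and your blanket claim that ``the $\forall\exists$-clauses hold because the hull is closed under the chosen witnesses'' breaks down for this clause. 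Two concrete failures occur. First, if $a=S(b)$ in the ambient model and $b$ is not swept into the hull, then $a$ becomes a limit level of the hull; but at a successor level two distinct nodes of $V(a)$ may legitimately share all their predecessors (that is how splitting happens), so if both lie in the given set $A$ the hull violates $\psi_{\alpha}$. Your list of closure functions contains the $S$-successor map but not the $S$-predecessor map, whereas the paper explicitly requires $L_{\alpha+1}^{n}$ to be closed under successors \emph{and} predecessors precisely so that ``limit in the hull'' coincides with ``limit in the ambient model'' (its observation (6)).

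Second, even at a genuine limit level --- including the maximal level $m$ --- two distinct hull nodes $y\neq z$ differ in the ambient model at some level, but nothing among your closures ($S$-successors, $F_{\beta}$- and $G$-images and preimages, tree predecessors at hull levels, pruning witnesses) forces such a \emph{separating level} into the hull: the hull's levels below $a$ can all lie below the first splitting point of $y$ and $z$, in which case $y$ and $z$ have identical predecessor sets inside the hull and again $\psi_{\alpha}$ fails. The paper's construction inserts, at each of the $\omega$ stages, a level of $L_{\alpha+1}^{N}$ at which any two distinct hull nodes sitting at a limit (or maximal) level differ; this step does not follow from the closures you list and must be added, after which your hull argument (iterated $\omega$ times, with the cardinality bookkeeping you already have) goes through. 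Relatedly, the maximal level needs the separate treatment the paper gives it ($m$ and $V^{N}(m)\cap X$ are handled outside the $G$-closure, and the chosen paths avoid $V(m)$), since $G$-surjectivity is demanded only at non-maximal levels.
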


\begin{proof}
    Most properties of being an AEC are easy to prove. We only prove that $LS(\mathbf{K}_{\alpha}) = \aleph_{0}$. 
    
    Consider $N \in \mathbf{K}_{\alpha}$ and some $X \subseteq N$. 
    We define by induction on $n \in \omega$ sets $L_{\beta}^{n} \subseteq L_{\beta}^{N}$, for every $\beta \leq \alpha + 1$, and  sets $V^{n}(x) \subseteq V^{N}(x)$ and take their unions to create the desired model, $M$.

    Define $L_{1}^{0}$ to be an initial segment of $L_{1}^{N}$ that contains $L_{1}^{N} \cap X$. We observe that $|L_{1}^{0}| \leq |X| + \aleph_{0}$, since every strict initial segment of $L_{1}$ is countable ($L_{1}$ is $\aleph_{1}$-like).

    For every $2 \leq \beta \leq \alpha$, we define $L_{\beta}^{0} = L_{\beta}^{N} \cap X$.

    Furthermore, we take $L_{\alpha + 1}^{0}$ to satisfy the following:

    \begin{itemize}
        \item $(L_{\alpha + 1}^{N} \cap X) \setminus \{m\} \subseteq L_{\alpha + 1}^{0}$, where $m$ is the maximum element of $L_{\alpha + 1}^{N}$ (if exists).
        \item $0 \in L_{\alpha + 1}^{0}$
        \item not to contain the maximum element (if any)        
        \item if $y \in V^{N} \cap X$ non maximal, then the level $x$, such that $y \in V^{N}(x)$, belongs to $L_{\alpha + 1}^{0}$
        \item if $y \neq z \in V^{N} \cap X$ belong to some limit level of $L_{\alpha + 1}^{N}$ (even if it is the maximum level), then they must differ at some level of $L_{\alpha + 1}^{0}$ (If $L_{\alpha + 1}^{0}$ does not already contain such a level, we introduce one to it.)
        \item $L_{\alpha + 1}^{0}$ is  closed for successors and predecessors. I.e. if $x \in L_{\alpha + 1}^{0}$, then $S(x) \in L_{\alpha + 1}^{0}$ and if $x = S(y)$, then $y \in L_{\alpha + 1}^{0}$
    \end{itemize}

    We observe that $|L_{\alpha + 1}^{0}| \leq |X| + \aleph_{0}$.

    Finally, for every $x \in L_{\alpha + 1}^{N} \cap X$, we define $V^{0}(x)$ as follows:

    \begin{itemize}
        \item $V^{0}(x)$ must contain $V^{N}(x) \cap X$ (could be empty)
        \item For every $x \in (L_{\alpha + 1}^{N} \cap X) \cup \{m\}$ (if there is a maximum), if $y \in V^{N}(x) \cap X$, then for every $x' <_{\alpha + 1} x$, with $x' \in L_{\alpha + 1}^{N} \cap X$, we choose some $z \in V^{N}(x')$ with $T(z, y)$ and require $z \in V^{0}(x')$. Similarly for every $x' >_{\alpha + 1} x$, $x' \in (L_{\alpha + 1}^{N} \cap X) \setminus \{m\}$ we choose some $z \in V^{0}(x')$ with $T(y, z)$. For the lower levels the choice is unique, but for the upper levels the choice may not be unique. We make the choice such that for every $y$ to create a path of the tree. This path does not contain any elements that belong to the maximal level. 
        
    \end{itemize}

    We also observe that $|V^{0}(x)| \leq |X| + \aleph_{0}$.

    Now, suppose we have defined $L_{\beta}^{n}$, for every $1 \leq \beta \leq \alpha + 1$, and $V^{n}(x)$, for every $x \in L_{\alpha + 1}^{n - 1}$, so that $|L_{\beta}^{n}|, |V^{n}(x)| \leq |X| + \aleph_{0}$.

    We define $L_{\beta}^{n + 1}$, for every $1 \leq \beta \leq \alpha$, by induction on $\beta$. 

    For our convenience, in this proof, when we refer to the functions $F_{\beta, x}$ and $G_{x}$, we mean the functions $F_{\beta, x} = F_{\beta}^{N}(x, \cdot, \cdot) : L_{\beta}^{N} \to (L_{\beta + 1}^{N})_{\leq_{(\beta + 1)} x}$ and $G_{x} = G^{N}(x, \cdot, \cdot) : L_{\alpha}^{N} \to V^{N}(x)$.

    In order to define $L_{1}^{n + 1}$, for every $x \in L_{2}^{n}$ and every $z \leq_{2} x, z \in L_{2}^{n}$, we choose a $y \in F_{1, x}^{-1}(\{z\})$. 
   
    So $L_{1}^{n + 1}$ is the least initial segment that contains $L_{1}^{n}$ and all the inverse images we have chosen. We observe that $|L_{1}^{n + 1}| \leq |X| + \aleph_{0}$, since the inverse images we chose are at most $|L_{2}^{n}| \cdot |L_{2}^{n}| \leq |X| + \aleph_{0}$.

    Suppose we have already defined $L_{\beta}^{n + 1}$ and we want to define $L_{\beta + 1}^{n + 1}$. Define $L_{\beta + 1}^{n + 1}$ to be the subset of $L_{\beta + 1}^{N}$ that contains $L_{\beta + 1}^{n}$ and which is closed under the following two operations: First, for every $x \in L_{\beta + 2}^{n}$ and every $z \leq_{\beta + 2} x, z \in L_{\beta + 2}^{n}$, we choose a $y \in F_{\beta + 1, x}^{-1}(\{z\})$. Second, we require that the images of all the functions $F_{\beta, x}|_{L_{\beta}^{n + 1}}$, for every $x \in L_{\beta + 1}^{n}$ are in $L_{\beta+1}^{n+1}$. We observe that $|L_{\beta + 1}^{n + 1}| \leq |X| + \aleph_{0}$.

    Finally, for limit stages, i.e. $\beta$ is limit and we have defined $L_{\gamma}^{n + 1}, \gamma < \beta$, for every $x \in L_{\beta + 1}^{n}$ and every $z \leq_{\beta + 1} x, z \in L_{\beta + 1}^{n}$, we choose a $y \in F_{\beta, x}^{-1}(\{z\})$. 
    So let $L_{\beta}^{n + 1}$ be the union of $L_{\beta}^{n}$, all $L_{\gamma}^{n + 1}, \gamma < \beta$ and all the inverse images we have chosen. 

    Observe that $|L_{\beta}^{n + 1}| \leq |X| + \aleph_{0}$.

    Now it remains to define $L_{\alpha + 1}^{n + 1}$ and $V^{n + 1}(x)$. We define $L_{\alpha + 1}^{n + 1}$ as follows:

    \begin{itemize}
        \item contains $L_{\alpha + 1}^{n}$
        \item contains all the images of the functions $F_{\alpha, x}|_{L_{\alpha}^{n + 1}}$, for every $x \in L_{\alpha + 1}^{n}$
        \item if $y \neq z \in V^{n}(x)$, where $x \in L_{\alpha + 1}^{n - 1}$ is 
        some limit point of $L_{\alpha + 1}^{N}$, then $y, z$ must differ at some level of $L_{\alpha + 1}^{n + 1}$
        \item it must be closed for successors and predecessors 
    \end{itemize}

    We have that $|L_{\alpha + 1}^{n + 1}| \leq |X| + \aleph_{0}$.

    Now, for every $x \in L_{\alpha + 1}^{n}$ we define $V^{n + 1}(x)$:
    
    \begin{itemize}
        \item contains $V^{n}(x)$ (if exists)
        \item contains all the images of $G_{x}|_{L_{\alpha + 1}^{n + 1}}$
        \item for every level, $y$ of $L_{\alpha + 1}^{n}$ or the maximum level, $m$, and for every element of those levels that belongs to $V^{n}(y)$ or $V^{N}(m) \cap X$, we choose a path of the tree containing that element and we require the new elements to be in the corresponding levels, $V^{n + 1}(\cdot)$. 
    \end{itemize}

    We notice that $|V^{n + 1}(x)| \leq |X| + \aleph_{0}$.

    Now, we are ready to define:
    \begin{itemize}
        \item $L_{\beta}^{\omega} = \bigcup_{n \in \omega} L_{\beta}^{n}$, for $1 \leq \beta \leq \alpha + 1$
        \item for every $x \in L_{\alpha + 1}^{\omega}, V^{\omega}(x) = \bigcup_{n \in \omega} V^{n}(x)$ (if for some $n, V^{n}(x)$ is not defined, we consider it as the empty set)  
    \end{itemize}

    We can see that $V^{\omega}(x)$ is well defined. More specifically, we need to check that for every $x \in L_{\alpha + 1}^{\omega}$, we have defined $V^{n}(x)$ for some $n$. If $x \in L_{\alpha + 1}^{\omega}$, then there is some $n$ such that $x \in L_{\alpha + 1}^{n}$ and therefore we have defined $V^{n + 1}(x)$.

    \underline{Some observations for $L_{\alpha + 1}^{\omega}$ :}
    \begin{enumerate}
        \item $0 \in L_{\alpha + 1}^{\omega}$ and is its minimum element
        \item the restriction of $<_{\alpha + 1}$ on $L_{\alpha + 1}^{\omega}$ is a linear order
        \item $L_{\alpha + 1}^{\omega}$ is closed under successor function 
        \item $m \notin L_{\alpha + 1}^{\omega}$ (if exists)
        \item $|L_{\alpha + 1}^{\omega}| \leq |X| + \aleph_{0}$ (that is also holds for every $\beta \leq \alpha$)
        \item if $x \in L_{\alpha + 1}^{\omega}$, then $x$ is limit for $L_{\alpha + 1}^{\omega}$ iff $x$ is limit for $L_{\alpha + 1}^{N}$ (that comes from the fact that every $L_{\alpha + 1}^{n}$ is closed for successors and predecessors)
    \end{enumerate}

    We are, finally, ready to define the desired model $M$ :

    \begin{itemize}
        \item $L_{0}^{M} = L_{0}^{N}$
        \item $L_{\beta}^{M} = L_{\beta}^{\omega}$, for every $1 \leq \beta \leq \alpha$
        \item $L_{\alpha + 1}^{M} = L_{\alpha + 1}^{\omega} \cup \{m\}$
        \item $V^{M} = \bigcup_{x \in L_{\alpha + 1}^{\omega}} V^{\omega}(x) \cup (V^{N}(m) \cap X)$
    \end{itemize}

    The universe of $M$ is the union of these sets.

    It remains to prove that $M \prec_{\mathbf{K}_{\alpha}} N$:

    We prove some closure properties and the rest of the details are left for the reader.
    
    First, we have to check if the restrictions of the functions have the properties we want. 

    For every $x \in L_{1}^{\omega}, F_{0, x}^{M} = F_{0, x}^{N}$, which is well defined, since we took initial segments for each $L_{1}^{n}$ (so $L_{1}^{\omega}$ is an initial segment of $L_{1}^{N}$).

    For $1 \leq \beta \leq \alpha$, we take as $F_{\beta, x}^{M}$ the restriction of $F_{\beta, x}^{N}$ on $L_{\beta}^{\omega}$, for every $x \in L_{\beta + 1}^{\omega}$. We need to check it is well defined, i.e. if $y \in L_{\beta}^{\omega}$, then $F_{\beta, x}^{M}(y)$ should be in $L_{\beta + 1}^{M} = L_{\beta + 1}^{\omega}$. Indeed, since $x \in L_{\beta + 1}^{\omega}$ and $y \in L_{\beta}^{\omega}$, then $x \in L_{\beta + 1}^{n}$ and $y \in L_{\beta}^{n + 1}$, for some $n$ and that means, by definition, that its image through $F_{\beta}$ is in $L_{\beta + 1}^{n + 1} \subseteq L_{\beta + 1}^{\omega}$. We, also, need to check that the function is onto. If $y \in (L_{\beta + 1}^{\omega})_{\leq_{(\beta + 1)} x}$, we will find some $z \in L_{\beta}^{\omega}$, such that $F_{\beta, x}^{M}(z) = y$. Since $x \in L_{\beta + 1}^{\omega}$ and $y \in L_{\beta}^{\omega}$, there is a $n$, with $x \in L_{\beta + 1}^{n}$ and $y \in L_{\beta}^{n + 1}$. By definition $L_{\beta}^{n + 1} \subseteq L_{\beta}^{\omega}$ contains some $z \in F_{\beta}^{-1}(\{y\})$.

    Similarly, we can prove the properties we want for $G_{x}$, for every $x \in L_{\alpha + 1}^{\omega}$.

    All the other requirements ($m$ is not a successor, elements at limit levels must differ at some lower level, tree ordering, ...) for our model are immediate from how we defined the universe of $M$.
    
\end{proof}

\section{Spectrum results}

The next theorem characterizes the spectrum of $\psi_{\alpha}$.

\begin{thm}\label{spectrum}
    The spectrum of $\psi_{\alpha}$ is characterized by the following properties:
    \begin{enumerate}
        \item $[\aleph_{0}, \aleph_{\alpha}^{\aleph_{0}}] \subseteq Spec(\psi_{\alpha})$ and $\aleph_{\alpha + 1} \in Spec(\psi_{\alpha})$.
        \item if there exists a $(\mu, \lambda, \kappa)$-Kurepa tree, where $\aleph_{1} \leq \mu \leq \lambda \leq \aleph_{\alpha + 1}$, then $[\aleph_{0}, \kappa] \subseteq Spec(\psi_{\alpha})$. (Note here that this case includes $\aleph_{\alpha + 1}$-Kurepa trees, when $\mu$ equals $\aleph_{\alpha + 1}$.)
        \item no cardinal belongs to $Spec(\psi_{\alpha})$ except those required by (1)-(2). I.e. if $\psi_{\alpha}$ has a model of size $\kappa$, then $\kappa \in [\aleph_{0}, max\{\aleph_{\alpha}^{\aleph_{0}}, \aleph_{\alpha + 1}\}]$ or there exists a $(\mu, \lambda, \kappa)$-Kurepa tree, $\mu \leq \lambda\leq \aleph_{\alpha + 1}$.
    \end{enumerate}
\end{thm}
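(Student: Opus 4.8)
The plan is to treat the three clauses separately. Clauses (1) and (2) are \emph{lower-bound} statements, asking only that certain models exist, while (3) is the \emph{upper-bound} statement, asserting that no model has any other size. The single tool shared by all three is the downward L\"owenheim--Skolem property: since $\psi_\alpha$ is an $\mathcal{L}_{\omega_1,\omega}$-sentence and, by Theorem \ref{AEC}, $LS(\mathbf{K}_\alpha)=\aleph_0$, whenever $\psi_\alpha$ has a model of size $\mu$ it has a model of every size in $[\aleph_0,\mu]$. Hence for (1) and (2) it is enough to exhibit one model of the relevant maximal size, and for (3) it is enough to bound $|M|$ from above for an arbitrary $M\models\psi_\alpha$.

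For (1), the tree $(\aleph_\alpha^{<\omega},\subseteq)$ — with $L_{\alpha+1}$ of order type $\omega+1$, top element $m$, $V(m)$ the set of cofinal branches, and each $L_\beta$ ($\beta\le\alpha$) taken of full size $\aleph_\beta$ — is a model of $\psi_\alpha$ of size $\aleph_\alpha^{\aleph_0}$: the non-maximal levels have size $\le\aleph_\alpha=|L_\alpha|$, so the surjections $G(x,\cdot,\cdot)$ and $F_\alpha(x,\cdot,\cdot)$ exist, and $m$ is not a successor. For $\aleph_{\alpha+1}\in\mathrm{Spec}(\psi_\alpha)$ take instead a ``thin'' tree of height $\omega_{\alpha+1}$ with one node per level and no maximum; every proper initial segment of $L_{\alpha+1}$ has size $\le\aleph_\alpha$, so again the $F_\alpha(x,\cdot,\cdot)$ exist with $|L_\alpha|=\aleph_\alpha$, and the model has size $\aleph_{\alpha+1}$. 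For (2), given a $(\mu,\lambda,\kappa)$-Kurepa tree with $\aleph_1\le\mu\le\lambda\le\aleph_{\alpha+1}$, put its levels as the non-maximal $V(x)$'s (so $L_{\alpha+1}$ has order type $\mu+1$, top $m$), its $\kappa$ cofinal branches as $V(m)$, and choose the $L_\beta$'s so that $|L_\alpha|$ is at least the supremum of the level sizes (which is $<\lambda\le\aleph_{\alpha+1}$), cascading this down to all $\beta\le\alpha$; since $\kappa>\mu$ and $\lambda\le\aleph_{\alpha+1}$, everything outside $V(m)$ has size $<\kappa$, so the model has size exactly $\kappa$. This is essentially the observation already recorded in Section 3. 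In both clauses downward L\"owenheim--Skolem then yields the full interval $[\aleph_0,\kappa]$.

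For (3), let $M\models\psi_\alpha$ with $|M|=\kappa>\max\{\aleph_\alpha^{\aleph_0},\aleph_{\alpha+1}\}$; I must produce a $(\mu,\lambda,\kappa)$-Kurepa tree with $\mu\le\lambda\le\aleph_{\alpha+1}$. First $L_{\alpha+1}^M$ has a maximum $m$, since otherwise all levels are non-maximal of size $\le\aleph_\alpha$ and $|L_{\alpha+1}^M|\le\aleph_{\alpha+1}$, forcing $|M|\le\aleph_{\alpha+1}$; and as everything outside $V^M(m)$ has size $\le\aleph_{\alpha+1}<\kappa$, we get $|V^M(m)|=\kappa$. Let $\theta$ be the order type of $L_{\alpha+1}^M\setminus\{m\}$ and $\mu=\mathrm{cf}(\theta)$, a regular cardinal with $\mu\le|\theta|\le\aleph_{\alpha+1}$ (and $\mu=\aleph_{\alpha+1}$ precisely when $|L_{\alpha+1}^M|=\aleph_{\alpha+1}$, by Lemma \ref{height}; otherwise $\mu\le\aleph_\alpha$). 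Fix a cofinal $C\subseteq L_{\alpha+1}^M\setminus\{m\}$ of order type $\mu$ and restrict the tree order $T$ to the levels in $C$: this is a genuine set-theoretic tree $T'$ of height $\mu$ with all levels of size $\le\aleph_\alpha$. Each $v\in V^M(m)$ gives a cofinal branch of $T'$, namely its $T$-predecessors on the levels of $C$, and distinct $v,v'$ give distinct branches, since their $T$-predecessor sets already diverge below the limit level $m$ and hence at every higher level, in particular at one in $C$. So $T'$ has at least $\kappa$ cofinal branches; if $\mu=\aleph_0$ it would have at most $\aleph_\alpha^{\aleph_0}<\kappa$ of them, a contradiction, so $\aleph_1\le\mu\le\aleph_{\alpha+1}$, and with $\lambda=\aleph_\alpha^+\le\aleph_{\alpha+1}$ the triple $(\mu,\lambda,-)$ is of the required shape.

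The one step I expect to be the genuine obstacle is the passage from ``$T'$ has at least $\kappa$ cofinal branches'' to a tree with \emph{exactly} $\kappa$ cofinal branches, as the definition of a $(\mu,\lambda,\kappa)$-Kurepa tree requires. The natural move is to replace $T'$ by the downward closure $T''$ of the $\kappa$ branches arising from $V^M(m)$; then $T''$ has exactly its own set of cofinal branches and still has levels of size $<\lambda$, but the closure may have acquired ``phantom'' limit branches. Eliminating or counting these — by a more careful choice of $C$, or by an auxiliary pruning argument using the level-size bound together with Lemma \ref{B(k)} — is where the argument needs the most care. (I would also remark that even the weaker conclusion that $T'$ is a $(\mu,\lambda,\kappa')$-Kurepa tree for some $\kappa'\ge\kappa$ already shows, via (2), that $\mathrm{Spec}(\psi_\alpha)$ equals the union of the intervals supplied by (1) and (2).) Everything else in (3) — the split into the subcases $|L_{\alpha+1}^M|\le\aleph_\alpha$ and $|L_{\alpha+1}^M|=\aleph_{\alpha+1}$, and the verification that the model described in clause (2) really satisfies $\psi_\alpha$ — is routine given Lemmas \ref{help}--\ref{height}.
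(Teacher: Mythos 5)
Your proposal is correct and follows essentially the same route as the paper: exhibit $(\aleph_\alpha^{\le\omega},\subset)$ and a thin tree of height $\omega_{\alpha+1}$ for (1), read (2) off the construction, and for (3) restrict to a cofinal set of levels of $L_{\alpha+1}$ (splitting by its size/cofinality) so that the maximal-level elements yield at least $\kappa$ distinct branches of a tree of height $\mu$ with levels of size $\le\aleph_\alpha$. The ``exactly $\kappa$ branches'' obstacle you flag is not one the paper faces: Definition~\ref{weirdKurepa} is used with the reading ``at least $\kappa$ branches'' (note that Theorem~\ref{maximal model spectrum}(3) adds ``exactly'' explicitly when it matters), so the paper's proof, like your weaker fallback conclusion, stops at ``at least $\kappa$'', which indeed suffices for the characterization.
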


\begin{proof}
    (1) It is easy to see that $(\aleph_{\alpha}^{\leq \omega}, \subset)$ is a model of $\psi_{\alpha}$ with size $\aleph_{\alpha}^{\aleph_{0}}$. In fact, that's a tree with countable height and every non maximal level has size of $\aleph_{\alpha}$. 
    
    It remains to construct a model of $\psi_{\alpha}$ with size $\aleph_{\alpha + 1}$. For example a tree with height $\aleph_{\alpha + 1}$ and each level has only one element is such a model. 
    
    (2) It is immediate from the definition that every $(\mu, \lambda, \kappa)$-Kurepa tree with $\mu \leq \lambda \leq \aleph_{\alpha + 1}$ gives rise to a model of $\psi_{\alpha}$.

    (3) Assume $N \models \psi_{\alpha}$ and $|N| = \kappa$.

    \begin{itemize}
        \item If $|L_{\alpha + 1}^{N}| \leq \aleph_{0}$, then $\kappa \leq \aleph_{\alpha}^{\aleph_{0}}$. 

        This comes from the fact that $\forall \beta \leq \alpha, |L_{\beta}| \leq \aleph_{\beta} \leq \aleph_{\alpha}$ and that the tree has countable size with levels of size at most $\aleph_{\alpha}$.
        \item If $\aleph_{1} \leq |L_{\alpha + 1}^{N}| \leq \aleph_{\alpha}$, then we need to check its cofinality. If the cofinality of $L_{\alpha + 1}^{N}$ is $\aleph_{0}$, then $\kappa \leq \aleph_{\alpha}^{\aleph_{0}}$ and this falls under the first case. If the cofinality is $\aleph_{1} \leq \mu \leq \aleph_{\alpha}$, then we have two subcases: 
        \begin{itemize}
            \item if the tree has $\leq \aleph_{\alpha}$ cofinal branches, then $\kappa \leq \aleph_{\alpha}$ and so we are again in the case (1)
            \item if the tree has at least $\aleph_{\alpha + 1}$ cofinal branches, then $N$ codes a $(\mu, \aleph_{\alpha + 1}, \kappa)$-Kurepa tree.
        \end{itemize}
        \item If $|L_{\alpha + 1}^{N}| = \aleph_{\alpha + 1}$, we have two subcases:
        \begin{itemize}
            \item if $\kappa = \aleph_{\alpha + 1}$, we are in case (1)
            \item If $\kappa > \aleph_{\alpha + 1}$, then $N$ codes an $\aleph_{\alpha + 1}$-Kurepa tree. 
        \end{itemize}
    \end{itemize}
\end{proof}

Next we prove a theorem for the maximal models spectrum of $\psi_{\alpha}$.

\begin{thm}\label{maximal model spectrum}
The maximal models Spectrum of $\psi_{\alpha}$ is characterized by the following:
\begin{enumerate}
    \item $\psi_{\alpha}$ has a maximal model of size $\aleph_{\alpha + 1}$
    \item If $\lambda^{\aleph_{0}} \geq \aleph_{\alpha + 1}$, for some $\aleph_{0} \leq \lambda \leq \aleph_{\alpha}$, then $\psi_{\alpha}$ has a maximal model of size $\lambda^{\aleph_{0}}$
    \item If there exists a $(\mu, \aleph_{\alpha + 1}, \kappa)$-Kurepa tree, $\aleph_{1} \leq \mu \leq \aleph_{\alpha + 1}$, with exactly $\kappa$ cofinal branches, then $\psi_{\alpha}$ has a maximal model in $\kappa$ 
    \item $\psi_{\alpha}$ has maximal models only on those cardinalities required by (1)-(3).
\end{enumerate}
\end{thm}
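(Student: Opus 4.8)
The plan is to derive (1)--(3) from a single construction principle and to prove (4) by dissecting an arbitrary maximal model with Lemmas \ref{help}, \ref{maximal1} and Corollary \ref{help1}. The engine behind (1)--(3) is the following. Suppose $M\in\mathbf K_{\alpha}$ satisfies $|L_{\beta}^{M}|=\aleph_{\beta}$ for all $\beta\le\alpha$, $L_{\alpha+1}^{M}$ has a maximum element $m$, $V^{M}(m)$ is the set of \emph{all} cofinal branches of the tree coded below $m$, and $|M|>\aleph_{\alpha}$. Then $M$ is maximal: if $M\prec_{\mathbf K_{\alpha}}N$, Lemma \ref{help}(3)--(5) give $L_{\beta}^{N}=L_{\beta}^{M}$ for $\beta\le\alpha$, $L_{\alpha+1}^{M}$ an initial segment of $L_{\alpha+1}^{N}$, and $V^{N}(x)=V^{M}(x)$ for every non-maximal $x\in L_{\alpha+1}^{M}$; Corollary \ref{help1}(3) forbids a strict end-extension of $L_{\alpha+1}^{M}$ (it would force $|M|=\aleph_{\alpha}$), so $L_{\alpha+1}^{N}=L_{\alpha+1}^{M}$, and then Corollary \ref{help1}(1) leaves $N$ no room except to adjoin new cofinal branches to $V(m)$, which are already all present; hence $N=M$.

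For (1) I take $L_{\alpha+1}^{M}$ of order type $\omega_{\alpha+1}+1$ with one node on each level below $m$ and $V^{M}(m)$ the unique cofinal branch, choosing the $F_{\beta}$'s so that $|L_{\beta}^{M}|=\aleph_{\beta}$ for every $\beta\le\alpha+1$; since every proper initial segment of $\omega_{\alpha+1}$ has size $\le\aleph_{\alpha}$, this is a model of $\psi_{\alpha}$ of size $\aleph_{\alpha+1}$ and the engine applies. For (2) I take $L_{\alpha+1}^{M}$ of order type $\omega+1$, let the tree below $m$ be $(\lambda^{<\omega},\subset)$ with $V^{M}(m)=\lambda^{\omega}$, and pad the $L_{\beta}$'s up to $\aleph_{\beta}$; as $\lambda\le\aleph_{\alpha}$ every level of $\lambda^{<\omega}$ has size $\le\aleph_{\alpha}=|L_{\alpha}^{M}|$, so the surjections $G(x,\cdot,\cdot)$ exist, and $|M|=\lambda^{\aleph_{0}}\ge\aleph_{\alpha+1}>\aleph_{\alpha}$. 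For (3), from a $(\mu,\aleph_{\alpha+1},\kappa)$-Kurepa tree $T$ with exactly $\kappa$ cofinal branches I take $L_{\alpha+1}^{M}$ of order type $\mu+1$ (so $m$ is a limit, hence not a successor), let the tree below $m$ be $T$ and $V^{M}(m)$ its $\kappa$ branches, and pad the $L_{\beta}$'s; the levels of $T$ have size $<\aleph_{\alpha+1}$ and $|\mu|\le\aleph_{\alpha+1}$, so this is a model of $\psi_{\alpha}$ of size $\kappa$ (the tree has size $\le\aleph_{\alpha+1}\le\kappa$ and $\kappa\ge\aleph_{\alpha+1}>\aleph_{\alpha}$).

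For (4), let $M$ be maximal with $|M|=\kappa$. By Lemma \ref{maximal1}, $|L_{\beta}^{M}|=\aleph_{\beta}$ for $\beta\le\alpha$, so by Lemma \ref{help}(3),(5) a proper extension of $M$ must either add cofinal branches at the top level or strictly end-extend $L_{\alpha+1}^{M}$. First, $L_{\alpha+1}^{M}$ must have a maximum $m$: otherwise $M$ is extendable by adjoining a new greatest element together with a suitable top level --- this is the one step needing care, since it hinges on the precise reading of the prunedness clause of $\psi_{\alpha}$ and on whether the coded tree has a cofinal branch. Granting it, maximality makes $V^{M}(m)$ full (else add a branch) and rules out $|M|=\aleph_{\alpha}$ (such an $M$ admits a strict end-extension, obtained by adding a successor level above $m$ and then a new top level), so $\kappa\ge\aleph_{\alpha+1}$. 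Now split on $|L_{\alpha+1}^{M}|$. If $|L_{\alpha+1}^{M}|=\aleph_{\alpha+1}$, embedding $\omega_{\alpha+1}$ cofinally into $L_{\alpha+1}^{M}$ (as noted before Lemma \ref{help}) shows $M$ codes an $\aleph_{\alpha+1}$-Kurepa tree with $\kappa$ branches when $\kappa>\aleph_{\alpha+1}$ (case (3), $\mu=\aleph_{\alpha+1}$), and $M$ falls under case (1) when $\kappa=\aleph_{\alpha+1}$. If $|L_{\alpha+1}^{M}|\le\aleph_{\alpha}$, let $\mu$ be the cofinality of $L_{\alpha+1}^{M}\setminus\{m\}$ and restrict to a cofinal well-ordered subset of that length; since each node of a tree has a unique predecessor at every level, the restriction map is a bijection on cofinal branches, so the restriction is a genuine tree of height $\mu$ with levels of size $\le\aleph_{\alpha}$ and exactly $\kappa$ cofinal branches. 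If $\mu$ is uncountable this is a $(\mu,\aleph_{\alpha+1},\kappa)$-Kurepa tree (case (3)); if $\mu=\aleph_{0}$ the tree has countable height with levels of size $\le\aleph_{\alpha}$, so $\aleph_{\alpha+1}\le\kappa\le\aleph_{\alpha}^{\aleph_{0}}$, and a Cantor--Bendixson analysis of its closed set of cofinal branches yields $\kappa=\lambda^{\aleph_{0}}$ for some $\aleph_{0}\le\lambda\le\aleph_{\alpha}$ (case (2)).

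The main obstacle is precisely the reduction in (4) to a maximal model having a greatest element in $L_{\alpha+1}$: a model without a top level can only be extended by creating one, and deciding when that is possible requires pinning down exactly what ``pruned'' asserts for $\psi_{\alpha}$ and whether the coded tree has a cofinal branch. Everything else is bookkeeping with Lemmas \ref{help}, \ref{maximal1}, Corollary \ref{help1}, the fact that restricting a (possibly non-well-ordered) tree to a cofinal subset of its set of levels preserves the cofinal branches, and the fact that a closed set of cofinal branches through a tree of countable height with levels of size $\le\aleph_{\alpha}$ has cardinality either $\le\aleph_{0}$ or of the form $\lambda^{\aleph_{0}}$.
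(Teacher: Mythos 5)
Your proposal is correct and follows essentially the same route as the paper: your ``engine'' is just a rederivation of Corollary \ref{maximal2} (via Lemma \ref{help} and Corollary \ref{help1}), the witnesses for (1)--(3) are the same kind of explicit trees (the paper uses a height-$\aleph_{\alpha+1}$ tree with $\aleph_{\alpha+1}$ branches, $(\lambda^{\le\omega},\subset)$, and the $(\mu,\aleph_{\alpha+1},\kappa)$-Kurepa trees with all branches attached), and (4) is proved by Lemma \ref{maximal1} plus the same case split on $|L_{\alpha+1}|$ and its cofinality, with the countable-cofinality case settled by the same ``$\lambda$ or $\lambda^{\aleph_0}$'' fact about the closed set of branches (the paper cites the weight theorem for completely metrizable spaces where you invoke a Cantor--Bendixson analysis). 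The one step you flag as delicate --- that a maximal model must have a top level containing all cofinal branches, equivalently that models of size $\le\aleph_\alpha$ or with no maximum in $L_{\alpha+1}$ can always be properly extended --- is handled no more carefully in the paper, which simply asserts that models of cardinality $\le\aleph_\alpha$ are extendable and implicitly assumes the branches of a maximal model are present at a maximal level.
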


\begin{proof}
    First observe that if a model is maximal, then by Lemma \ref{maximal1} $|L_{\beta}| = \aleph_{\beta}$, for every $\beta \leq \alpha$. So, in this proof we will require every model to satisfy this requirement. 

    (1) We can easily construct a tree of height $\aleph_{\alpha + 1}$ with $\aleph_{\alpha + 1}$ cofinal branches. Then by Corollary \ref{maximal2} we have that this is a maximal model of $\psi_{\alpha}$.
    
    (2) We can easily see that $(\lambda^{\leq \omega}, \subset)$ is a model of $\psi_{\alpha}$. Now, if in addition, we know that $\lambda^{\aleph_{0}} \geq \aleph_{\alpha + 1}$, then again from Corollary \ref{maximal2}, we see that it is a maximal model.

    (3) We have already seen that those trees are models of our sentence and the maximality follows from Corollary \ref{maximal2}.

    (4) First, observe that we always can extend a model of cardinality $\leq \aleph_{\alpha}$.

    Assume, now that $N$ is a maximal model of $\psi_{\alpha}$.

    \begin{itemize}
       \item If $|L_{\alpha + 1}^{N}| = \aleph_{0}$, then $N$ codes a tree with countable height and levels with at most $\aleph_{\alpha}$ elements. Then we can prove that its maximal branches are either $\lambda$ or $\lambda^{\aleph_{0}}$, where $\lambda$ is the size of the body of the tree. Indeed, from \cite{Handbookofsettheoretictopology}, Theorem 8.3, we know that if $X$ is a completely metrizable space with weight $\lambda$ (a space with weight $\lambda$ is a space that the smallest cardinal for an open base for its topology is $\lambda$, which is equivalent to say that the smallest dense subset of the space is of size $\lambda$), then $|X| = \lambda$ or $|X| = \lambda^{\aleph_{0}}$. 
     
       If $X$ is the set of the maximal branches of the tree, then we can define a complete metric on it as follows: if $x, y \in X$, $d(x, y) = 1/n$, where $n$ is the least level that these elements differ. We can also see that the smallest cardinality of a dense subset is $\lambda$, for example, for every element of the tree, $b$, chose a branch $x_{b} \in X$ that contains $b$ and the desired subset is $\{ x_{b} | b \in T \}$ ($T$ is the body of the tree).

       So this proves that the tree (with its cofinal branches) is of size $\lambda$ or $\lambda^{\aleph_{0}}$ and the whole model is of size $\aleph_{\alpha}$ or $\lambda^{\aleph_{0}}$ (if $\lambda^{\aleph_{0}} \geq \aleph_{\alpha + 1}$). 

       Finally, from Corollary \ref{maximal2}, if the height of our tree is countable we can only have maximal model of size $\lambda^{\aleph_{0}}$ and so we are in case (2).

       \item If $\aleph_{1} \leq |L_{\alpha + 1}^{N}| \leq \aleph_{\alpha}$, then we have to check its cofinality. If the cofinality is countable, then we work with a cofinal subset of $L_{\alpha + 1}^{N}$ as in the previous case. If the cofinality is some uncountable $\mu$, we have two more subcases. If the maximal branches of the tree are $\leq \aleph_{\alpha}$, then $|N| = \aleph_{\alpha}$ (similarly to corresponding case of the proof of Theorem \ref{spectrum}) which means the model is not maximal. If the maximal branches of the tree are at least $\aleph_{\alpha + 1}$, lets say $\kappa$, then our model codes a $(\mu, \aleph_{\alpha + 1}, \kappa)$-Kurepa tree. 
       \item If $|L_{\alpha + 1}^{N}| = \aleph_{\alpha + 1}$, then if $|N| = \aleph_{\alpha + 1}$, we are done. If $|N| \geq \aleph_{\alpha + 2}$, then the maximal branches of the tree should be $\geq \aleph_{\alpha + 2}$ and that means our model codes an $\aleph_{\alpha + 1}$-Kurepa tree.
    \end{itemize}
\end{proof}
\begin{cor}\label{results}
    \begin{enumerate}
        \item If there are no $(\mu, \lambda, \kappa)$-Kurepa trees, with $\mu, \lambda \leq \aleph_{\alpha + 1}$ (this case includes $\aleph_{\alpha + 1}$-Kurepa trees), then $Spec(\psi_{\alpha}) = [\aleph_{0}, max\{\aleph_{\alpha}^{\aleph_{0}}, \aleph_{\alpha + 1}\}]$ and $MM-Spec(\psi_{\alpha}) = \{ \lambda^{\aleph_{0}} | \aleph_{0} \leq \lambda \leq \aleph_{\alpha}$ and $\lambda^{\aleph_{0}} \geq \aleph_{\alpha + 1}\} \cup \{\aleph_{\alpha + 1}\}$.
        \item If $\mathcal{B}(\aleph_{\alpha + 1}) > 2^{\aleph_{\alpha}}$ is a maximum, i.e. there is an $\aleph_{\alpha + 1}$-Kurepa tree of size $\mathcal{B}(\aleph_{\alpha + 1})$, then $\psi_{\alpha}$ characterizes $\mathcal{B}(\aleph_{\alpha + 1})$.
        \item If $\mathcal{B}(\aleph_{\alpha + 1}) > 2^{\aleph_{\alpha}}$ is not a maximum, 
        then $Spec(\psi_{\alpha})$ equals $[\aleph_{0}, \mathcal{B}(\aleph_{\alpha + 1}) )$. Moreover, $\psi_{\alpha}$ has maximal models in $\aleph_{\alpha + 1}, in \lambda^{\aleph_{0}}$, where $\lambda^{\aleph_{0}}$ is as in case (1),  and in cofinally many cardinalities below $\mathcal{B}(\aleph_{\alpha + 1})$.
        
    \end{enumerate}
\end{cor}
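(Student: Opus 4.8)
The three parts are corollaries of Theorems~\ref{spectrum} and~\ref{maximal model spectrum}, of Lemma~\ref{B(k)}, and of the downward L\"owenheim--Skolem property of $\mathcal{L}_{\omega_1,\omega}$ (which makes $Spec(\psi_\alpha)$ an initial segment of the infinite cardinals), combined with one small piece of arithmetic. The arithmetic is this: a tree of height $\mu\le\aleph_\alpha$ each of whose levels has size $<\aleph_{\alpha+1}$ has a body of size at most $\aleph_\alpha\cdot\aleph_\alpha=\aleph_\alpha$, hence at most $2^{\aleph_\alpha}$ cofinal branches; since also $\aleph_\alpha^{\aleph_0}\le\aleph_\alpha^{\aleph_\alpha}=2^{\aleph_\alpha}$, we get $\max\{\aleph_\alpha^{\aleph_0},\aleph_{\alpha+1}\}\le 2^{\aleph_\alpha}$. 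The plan in each part is to read off the lower inclusions from clauses~(1)--(2) of Theorem~\ref{spectrum} (resp.\ clauses~(1)--(3) of Theorem~\ref{maximal model spectrum}) and the upper restriction from clause~(3) (resp.\ clause~(4)).

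For part~(1): under its hypothesis clause~(2) of Theorem~\ref{spectrum} is vacuous, so clause~(3) gives $Spec(\psi_\alpha)\subseteq[\aleph_0,\max\{\aleph_\alpha^{\aleph_0},\aleph_{\alpha+1}\}]$; clause~(1) together with downward closure gives the reverse inclusion, hence equality. Likewise clause~(3) of Theorem~\ref{maximal model spectrum} is vacuous, so by clause~(4) every maximal model has one of the sizes furnished by clauses~(1) and~(2), namely $\aleph_{\alpha+1}$ or $\lambda^{\aleph_0}$ with $\aleph_0\le\lambda\le\aleph_\alpha$ and $\lambda^{\aleph_0}\ge\aleph_{\alpha+1}$; these are all realized, giving the stated equality for $MM-Spec(\psi_\alpha)$.

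For parts~(2) and~(3): first note $\mathcal{B}(\aleph_{\alpha+1})>2^{\aleph_\alpha}\ge\max\{\aleph_\alpha^{\aleph_0},\aleph_{\alpha+1}\}$. The key claim is that no $\kappa\ge\mathcal{B}(\aleph_{\alpha+1})$ lies in $Spec(\psi_\alpha)$ unless $KH(\aleph_{\alpha+1},\kappa)$ holds: a model of size $\kappa$ must, by Theorem~\ref{spectrum}(3), code a $(\mu,\lambda,\kappa)$-Kurepa tree with $\mu\le\lambda\le\aleph_{\alpha+1}$; if $\mu<\aleph_{\alpha+1}$ then all levels have size $\le\aleph_\alpha$ and the arithmetic above caps the number of branches at $2^{\aleph_\alpha}<\kappa$, which is absurd since the body is even smaller; so $\mu=\lambda=\aleph_{\alpha+1}$, and since the body then has size $\le\aleph_{\alpha+1}<\kappa$ the tree is an $\aleph_{\alpha+1}$-Kurepa tree with exactly $\kappa$ branches. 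In case~(2), $\mathcal{B}(\aleph_{\alpha+1})$ is attained, so it is itself in $Spec(\psi_\alpha)$, and by the claim nothing strictly larger is; with downward closure, $Spec(\psi_\alpha)=[\aleph_0,\mathcal{B}(\aleph_{\alpha+1})]$, i.e.\ $\psi_\alpha$ characterizes $\mathcal{B}(\aleph_{\alpha+1})$. In case~(3), applying the claim to $\kappa=\mathcal{B}(\aleph_{\alpha+1})$ would give $KH(\aleph_{\alpha+1},\mathcal{B}(\aleph_{\alpha+1}))$, i.e.\ attainment, a contradiction; hence $\mathcal{B}(\aleph_{\alpha+1})\notin Spec(\psi_\alpha)$, while every $\kappa<\mathcal{B}(\aleph_{\alpha+1})$ lies below some $\lambda$ with $KH(\aleph_{\alpha+1},\lambda)$, whose $(\aleph_{\alpha+1},\lambda)$-Kurepa tree is a model of size $\lambda>\kappa$, so downward closure puts $\kappa$ in the spectrum; thus $Spec(\psi_\alpha)=[\aleph_0,\mathcal{B}(\aleph_{\alpha+1}))$. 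The maximal-model statement in~(3) follows from Theorem~\ref{maximal model spectrum}: clause~(1) gives a maximal model in $\aleph_{\alpha+1}$, clause~(2) gives ones in the $\lambda^{\aleph_0}$ of part~(1), and clause~(3), applied to the $(\aleph_{\alpha+1},\lambda)$-Kurepa trees (each with exactly $\lambda$ branches and $\lambda\ge\aleph_{\alpha+2}$ by Definition~\ref{dfKurepa}) witnessing the supremum, gives maximal models in cofinally many cardinalities below $\mathcal{B}(\aleph_{\alpha+1})$, these $\lambda$ being cofinal in $\mathcal{B}(\aleph_{\alpha+1})$ exactly because it is not attained; Lemma~\ref{B(k)} moreover records that $cf(\mathcal{B}(\aleph_{\alpha+1}))\ge\aleph_{\alpha+2}$.

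The only step beyond bookkeeping is the arithmetic dichotomy forcing $\mu=\aleph_{\alpha+1}$ in the upper-bound direction of~(2) and~(3): recognizing that any model whose underlying tree has height below $\aleph_{\alpha+1}$ has at most $2^{\aleph_\alpha}$ branches, so once we are past $2^{\aleph_\alpha}$ (in particular past $\mathcal{B}(\aleph_{\alpha+1})$) the only models available come from genuine $\aleph_{\alpha+1}$-Kurepa trees, whose branch counts are bounded by $\mathcal{B}(\aleph_{\alpha+1})$ by definition. I expect the rest to be a routine assembly of the two spectrum theorems with downward L\"owenheim--Skolem.
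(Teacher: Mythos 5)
Your proposal is correct and follows essentially the same route as the paper: read the inclusions and exclusions off Theorems~\ref{spectrum} and~\ref{maximal model spectrum} together with downward L\"owenheim--Skolem, use the hypothesis $\mathcal{B}(\aleph_{\alpha+1})>2^{\aleph_{\alpha}}$ to rule out models coming from trees of height $\leq\aleph_{\alpha}$, and apply Theorem~\ref{maximal model spectrum}(3) to the Kurepa trees witnessing the supremum to get the cofinally many maximal models in part~(3). Your write-up merely makes explicit the bookkeeping the paper labels ``immediate.''
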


\begin{proof}
    (1) and (2) follow immediately from Theorems \ref{spectrum} and \ref{maximal model spectrum}. We only establish (3). The result for the spectrum is also immediate from Theorem \ref{spectrum}. For the last assertion, assume $\mathcal{B}(\aleph_{\alpha + 1}) = sup_{i} \kappa_{i}$ and for each $i$, there is an $\aleph_{\alpha + 1}$-Kurepa tree with exactly $\kappa_{i}$ maximal branches. Then, from Theorem \ref{maximal model spectrum} (3), each $\kappa_{i}$ is in the $MM-Spec(\psi_{\alpha})$.

    Observe that the assumption $\mathcal{B}(\aleph_{\alpha + 1}) > 2^{\aleph_{\alpha}}$ in cases (2) and (3) is used to ensure that there is no model of our sentence  which codes a tree with height $\mu \leq \aleph_{\alpha}$ and whose size exceeds $\mathcal{B}(\aleph_{\alpha + 1})$.
\end{proof}

Now we provide the amalgamation and joint embedding spectrum of models of $\psi_{\alpha}$.

\begin{thm}\label{JEPAP}
    \begin{enumerate}
        \item $(\mathbf{K}_{\alpha}, \prec_{\mathbf{K}_{\alpha}})$ fails JEP in all cardinals.
        \item \begin{itemize}
            \item If $0 < \alpha < \omega$ and $\kappa$ some infinite cardinal that belongs to $Spec(\psi_{\alpha})$, then
            \begin{itemize}
                \item if $\kappa \leq \aleph_{\alpha}$, $(\mathbf{K}_{\alpha}, \prec_{\mathbf{K}_{\alpha}})$ does not satisfy AP
                \item if $\aleph_{\alpha + 1} \leq \kappa \leq 2^{\aleph_{\alpha}}$, it depends whether $(\mu, \aleph_{\alpha}, \kappa)$-Kurepa trees, for $\mu \leq \aleph_{\alpha}$ exist or not. More specifically if there exists such tree for some $\mu$, then AP fails for $\kappa$; otherwise AP holds.
                \item if $\kappa > 2^{\aleph_{\alpha}}$, then $(\mathbf{K}_{\alpha}, \prec_{\mathbf{K}_{\alpha}})$ satisfies AP.
            \end{itemize}
            \item If $\alpha \geq \omega$, then $(\mathbf{K}_{\alpha}, \prec_{\mathbf{K}_{\alpha}})$ fails AP in all cardinalities.
        \end{itemize}
    \end{enumerate}
\end{thm}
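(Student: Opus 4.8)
The plan is to reduce both statements to the behaviour of the linear order $L_1$ and to the rigidity phenomena of Lemma~\ref{help}, Corollary~\ref{help1} and Lemma~\ref{height}. The key general principle is that in every $M\in\mathbf{K}_\alpha$ the reduct $(L_1^M,<_1^M)$ is an $\aleph_1$-like linear order, and by Lemma~\ref{help}(2) it is carried to an \emph{initial segment} of $L_1^N$ whenever $M\prec_{\mathbf{K}_\alpha}N$; moreover $L_1$ is a "free" parameter, since the only demands on it are that the $F_0$-surjections onto its countable initial segments exist and that it bounds the initial segments of $L_2$, both of which survive if $L_1^M$ is replaced by any $\aleph_1$-like order of cardinality $\ge|L_1^M|$ (and $\le|M|$). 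For part~(1), fix $\kappa\in\mathrm{Spec}(\psi_\alpha)$, pick a model of size $\kappa$, and choose two $\aleph_1$-like orders $A,B$ of cardinality $\le\kappa$ that are \emph{incomparable}, i.e.\ neither is isomorphic to an initial segment of the other: take $A=(\omega,<)$ and $B=(\mathbb{Q},<)$ if $\kappa$ is realised by a model whose $L_1$ may be taken countable, and $A=(\omega_1,<)$, $B=(\omega^*+\omega_1,<)$ otherwise (one is a well-order, the other is not, and neither has an initial segment isomorphic to the other). Installing $L_1\cong A$ resp.\ $L_1\cong B$ yields models $M_1,M_2$ of size $\kappa$; if $N$ were a joint embedding, $A\cong L_1^{M_1}$ and $B\cong L_1^{M_2}$ would be initial segments of $L_1^N$, hence comparable under inclusion --- a contradiction. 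Thus JEP fails at every $\kappa$.

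For part~(2) the dichotomy is governed by whether a model of size $\kappa$ can have $|L_1|\le\aleph_0$, equivalently (chasing the $F_\beta$-surjections) whether $|L_\beta^M|<\aleph_\beta$ for some $\beta\le\alpha$. \textbf{AP holds} when this is impossible, i.e.\ when every model of size $\kappa$ has $|L_\beta^M|=\aleph_\beta$ for all $\beta\le\alpha$. In that situation, given $M_0\prec_{\mathbf{K}_\alpha}M_1$ and $M_0\prec_{\mathbf{K}_\alpha}M_2$ of size $\kappa$, Lemma~\ref{help}(3),(5) forces $L_\beta^{M_0}=L_\beta^{M_i}$ for $\beta\le\alpha$ and $V^{M_0}(x)=V^{M_i}(x)$ for non-maximal $x$; and since $\kappa>2^{\aleph_\alpha}$ or $M_0$ carries $\kappa>\aleph_\alpha$ cofinal branches, the order $L_{\alpha+1}^{M_0}$ is frozen ($\aleph_{\alpha+1}$-likeness, or the fact that a maximal level with $>\aleph_\alpha$ elements cannot become non-maximal), so $L_{\alpha+1}^{M_0}=L_{\alpha+1}^{M_i}$ as well. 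By Corollary~\ref{help1}(1) the three models differ only in which maximal branches lie in $V(m)$, so the refinement that puts into $V(m)$ exactly the branches occurring in $M_1$ or in $M_2$ is a model of $\psi_\alpha$ amalgamating $M_1,M_2$ over $M_0$, and a cardinality count (the common part has size $\le\aleph_{\alpha+1}\le\kappa$, and at most $\kappa+\kappa=\kappa$ branches are added) makes the amalgam of size $\kappa$. By Lemma~\ref{height} the required rigidity is automatic when $\kappa>2^{\aleph_\alpha}$; and for $\aleph_{\alpha+1}\le\kappa\le2^{\aleph_\alpha}$ it holds precisely when no $(\mu,\aleph_\alpha,\kappa)$-Kurepa tree exists --- for if some model of size $\kappa$ had $|L_\alpha^M|<\aleph_\alpha$, then all its non-maximal levels have size $<\aleph_\alpha$, its height $\mu=|L_{\alpha+1}^M|\le\aleph_\alpha$, its body has size $\le\aleph_\alpha<\kappa$, so it must carry a maximal level with $\kappa$ cofinal branches, i.e.\ its body is a $(\mu,\aleph_\alpha,\kappa)$-Kurepa tree with $\mu\le\aleph_\alpha$.

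\textbf{AP fails} in the remaining cases. If $\kappa\le\aleph_\alpha$, or if $\aleph_{\alpha+1}\le\kappa\le2^{\aleph_\alpha}$ and a $(\mu,\aleph_\alpha,\kappa)$-Kurepa tree exists (use it as the body, padding the auxiliary orders downwards so that $|L_1|\le\aleph_0$), there is a model $M_0$ of size $\kappa$ with $|L_1^{M_0}|\le\aleph_0$. End-extend $L_1^{M_0}$ by an $\aleph_1$-like tail $A$ (and the appropriate surjections) to get $M_1\succ_{\mathbf{K}_\alpha}M_0$, and by a tail $B$ to get $M_2\succ_{\mathbf{K}_\alpha}M_0$, all of size $\kappa$, choosing $A,B$ --- namely $\omega$ and $\mathbb{Q}$ when $\kappa=\aleph_0$, and $\omega_1$ and $\omega^*+\omega_1$ when $\kappa\ge\aleph_1$ --- so that $L_1^{M_0}+A$ and $L_1^{M_0}+B$ are incomparable over $L_1^{M_0}$. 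Exactly as in part~(1), Lemma~\ref{help}(2) then rules out an amalgam of $M_1,M_2$ over $M_0$, so AP fails at $\kappa$. Finally, for $\alpha\ge\omega$ the regularity Lemma~\ref{lastdetails} breaks at the limit stage: as the observation after it shows, one can always arrange $|L_n|=\aleph_{n-1}$ for every finite $n$ while still having $|L_\omega|=\aleph_\omega$ and $|L_{\alpha+1}|$ as large as needed; hence \emph{every} $\kappa\in\mathrm{Spec}(\psi_\alpha)$ is realised by a model with $|L_1|\le\aleph_0$, and the same $L_1$-tail construction gives AP-failure at every cardinal, yielding the last clause.

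The main obstacle is the bookkeeping in part~(2). One must verify carefully (i) the equivalence "every model of size $\kappa$ has $|L_\beta|=\aleph_\beta$ for all $\beta\le\alpha$" $\iff$ "$\kappa>2^{\aleph_\alpha}$, or $\aleph_{\alpha+1}\le\kappa\le2^{\aleph_\alpha}$ and no $(\mu,\aleph_\alpha,\kappa)$-Kurepa tree exists", which requires pinning down exactly which cardinals a tree of height $\le\aleph_\alpha$ with levels $<\aleph_\alpha$ can realise as a model of $\psi_\alpha$; and (ii) that the union-of-branches structure in the "AP holds" argument is genuinely a model of $\psi_\alpha$ of size exactly $\kappa$ --- here the delicate point is that a cofinal branch of a subtree remains a \emph{maximal} branch once the tree is fattened, which follows from the tree axioms together with Corollary~\ref{help1}. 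By contrast, the JEP part and the AP-failure constructions are essentially routine once the principle "$L_1$ is an initial-segment-preserved free parameter" is isolated.
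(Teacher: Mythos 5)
Your proposal follows essentially the same route as the paper's proof: JEP and every AP failure are obtained from the initial-segment rigidity of Lemma \ref{help} by exhibiting two incomparable end-extensions of some still-extendable $L_{\beta}$ (you always re-pad the lower orders so that the extension happens at $L_{1}$, whereas the paper extends $L_{\gamma+1}$ when $\kappa=\aleph_{\gamma}\leq\aleph_{\alpha}$ and $L_{\alpha}$ in the $(\mu,\aleph_{\alpha},\kappa)$-Kurepa-tree case -- a cosmetic difference), the case $\alpha\geq\omega$ is handled by exactly the paper's trick of shrinking $L_{1},\dots,L_{n},\dots$ below a given model, and AP in the remaining cases is proved, as in the paper, from Lemmas \ref{lastdetails}, \ref{height}, \ref{help} and Corollary \ref{help1} together with an amalgam that just merges maximal branches; your reduction of the middle case $\aleph_{\alpha+1}\leq\kappa\leq 2^{\aleph_{\alpha}}$ to the existence of $(\mu,\aleph_{\alpha},\kappa)$-Kurepa trees is the same argument the paper gives.

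One step is wrong as stated and needs the correction the paper makes explicitly. In the AP-holds cases you assert that $L_{\alpha+1}^{M_{0}}$ is ``frozen'', i.e.\ $L_{\alpha+1}^{M_{0}}=L_{\alpha+1}^{M_{i}}$. This fails when $L_{\alpha+1}^{M_{0}}$ has no maximum element: since the surjection $F_{\alpha}(x,\cdot,\cdot)$ is demanded only for non-maximal $x$, each $M_{i}$ may legitimately add a new top point $m_{i}$ carrying maximal branches -- this is precisely the one-point end-extension allowed by Corollary \ref{help1}(2), which you cite and then override; ``$\aleph_{\alpha+1}$-likeness'' does not exclude it. Consequently Corollary \ref{help1}(1) does not literally apply, and the amalgam has to be built as in the paper: $L_{\alpha+1}^{M_{i}}$ is either $L_{\alpha+1}^{M_{0}}$ or a one-point end-extension, one identifies the two new maxima, puts into the top level all maximal branches of $M_{1}$ and $M_{2}$, and -- a point your write-up omits -- identifies branches of $M_{1}$ and $M_{2}$ that have the same predecessors, since otherwise the axiom that distinct nodes at a limit level must differ at some lower level is violated and $M_{1},M_{2}$ would not embed into the amalgam over $M_{0}$. (A smaller slip: ``$|L_{1}|\leq\aleph_{0}$'' is not equivalent, model by model, to ``$|L_{\beta}|<\aleph_{\beta}$ for some $\beta\leq\alpha$''; the equivalence only holds at the level of existence of a size-$\kappa$ model after re-padding, which is in fact all your construction uses.) With these adjustments your argument coincides with the paper's proof.
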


\begin{proof}

    (1)     The first observation is that in all cardinalities there exist two linear orders none of which is initial segment of the other.

Assume $M, N \models \psi_{\alpha}$, but neither $L_{1}^{N}$ is not an initial segment of $L_{1}^{M}$ nor $L_{1}^{M}$ is not an initial segment of $L_{1}^{N}$. Then by Lemma \ref{help} (2), $M, N$ cannot be jointly embedded to some larger structure in $\mathbf{K}_{\alpha}$.

    (2) Assume $M_{0} \prec_{\mathbf{K}_{\alpha}} M_{1}, M_{2} \in \mathbf{K}_{\alpha}$ models of size $\kappa$. 

    Although we will treat each case separately, the idea is the same. We know that in all cardinalities there exists two linear orders none of which is an initial segment of the other. So, if we can find some $L_{\beta}^{M_{0}}$ that can be extended properly, we use that argument to find two extensions that cannot be embedded in a larger one. Otherwise, we can find the amalgam of $(M_{0}, M_{1}, M_{2})$. So, the same argument or counterexample will be used to prove many of the following cases.
    
    \underline{Case 1: $0 < \alpha < \omega$.}

    \begin{itemize}
        \item Suppose $\kappa \leq \aleph_{\alpha}$, lets say that $\kappa = \aleph_{\gamma}$, for some $\gamma \leq \alpha$. Then take $M_{0}$ to be the model that consists of $L_{\beta} = \omega_{\beta},$ for $0 \leq \beta \leq \gamma$ and $L_{\beta} = \omega_{\gamma}$, for $\gamma + 1 \leq \beta \leq \alpha + 1$ and every level $V^{M_0}(x)$ contains only one element. 
     
        Then we can define $M_{1}, M_{2}$ as follows. For $\beta \leq \gamma + 1$, $L_{\beta}^{M_{1}} = L_{\beta}^{M_{2}} = L_{\beta}^{M_{0}}$ and for $\beta \geq \gamma + 1$, $L_{\beta}^{M_{1}} = \omega_{\gamma} + \omega$, $L_{\beta}^{M_{2}} = \omega_{\gamma} + \mathbb{Q}$ and every level (both the old ones and the new ones) again contains one element. 
      
        It is easy to see that these are models of $\psi_{\alpha}$. (I mentioned nothing about the linear orders and the functions $F_{\beta}$, but I assume the order to be the usual one on every $L_{\beta}$ and it is easy to define the desired functions. In fact you can define a lot of different functions satisfying  the requirements of $\psi_{\alpha}$.)
        
        By Lemma \ref{help} (4), we know that since in $M_{0}$ we have $|L_{\beta}| = \aleph_{\beta}$, for all $\beta \leq \gamma$, then $L_{\gamma + 1}^{M_{0}}$ should be an initial segment of $L_{\gamma + 1}$ in every extension. So, similarly as in case (1) $L_{\gamma + 1}^{M_{0}}$ is an initial segment of both $L_{\gamma + 1}^{M_{1}}$ and $L_{\gamma + 1}^{M_{2}}$, but neither $L_{\gamma + 1}^{M_{1}}$ is an initial segment of $L_{\gamma + 1}^{M_{2}}$ nor $L_{\gamma + 1}^{M_{2}}$ is an initial segment of $L_{\gamma + 1}^{M_{1}}$. Thus, the triple $(L_{\gamma + 1}^{M_{0}}, L_{\gamma + 1}^{M_{1}}, L_{\gamma + 1}^{M_{2}})$ cannot be amalgamated.

    \item Suppose $\aleph_{\alpha + 1} \leq \kappa \leq 2^{\aleph_{\alpha}}$. Our model could reach size $\kappa$ only if $|L_{\alpha + 1}| = \aleph_{\alpha + 1}=\kappa$, or if our tree has $\kappa$ cofinal branches. 

    If there exists some $\mu \leq \aleph_{\alpha}$ with a $(\mu, \aleph_{\alpha}, \kappa)$-Kurepa tree, then we take as $M_{0}$ the following model: Define $L_{0} = \omega, L_{\beta} = \omega_{\beta}$, for $1 \leq \beta \leq \alpha - 1$, $L_{\alpha} = \omega_{\alpha - 1}$, $L_{\alpha + 1} = \mu \cup \{m\}$ (we include into our model the maximal level of the tree) and the levels of the model are the levels of the $(\mu, \aleph_{\alpha}, \kappa)$-Kurepa tree including the maximal one. It is easy to see that this is a model of $\psi_{\alpha}$. 
    
    Then, define as $M_{1}$ and $M_{2}$ the following. For $0 \leq \beta \leq \alpha - 1$, $L_{\beta}^{M_{1}} = L_{\beta}^{M_{2}} = L_{\beta}^{M_{0}}$, $L_{\alpha}^{M_{1}} = \omega_{\alpha - 1} + \omega$, $L_{\alpha}^{M_{2}} = \omega_{\alpha - 1} + \mathbb{Q}$ and $L_{\alpha + 1}^{M_{1}} = L_{\alpha + 1}^{M_{2}} = L_{\alpha + 1}^{M_{0}}$. The levels of $M_{1}, M_{2}$ are the same levels that $M_{0}$ has.
   With a similar argument to the previous case, we see that the triple $(L_{\alpha}^{M_{0}}, L_{\alpha}^{M_{1}}, L_{\alpha}^{M_{2}})$ cannot be amalgamated.  
    
    If there are no such trees, but we have a model in $\kappa$, we take two more subcases:
    \begin{itemize}
        \item If $|L_{\alpha + 1}^{M_{0}}| = \aleph_{\alpha + 1}$, then from Lemma \ref{lastdetails}, we have that $|L_{\beta}^{M_{0}}| = \aleph_{\beta}$ for every $\beta \leq \alpha + 1$. So by Lemma \ref{help}, we have that $L_{\beta}^{M_{0}} = L_{\beta}^{M_{1}} = L_{\beta}^{M_{2}}$, for every $\beta \leq \alpha$ and from Corollary \ref{help1} (3) (since $|M_{0}| > \aleph_{\alpha}$), that either $L_{\alpha + 1}^{M_{1}}, L_{\alpha + 1}^{M_{2}}$ are equal to $L_{\alpha + 1}^{M_{0}}$ or they are one point extensions. 
        
        \item If $|L_{\alpha + 1}^{M_{0}}| \leq \aleph_{\alpha}$, then the tree that $M_{0}$ codes has $\kappa \geq \aleph_{\alpha + 1}$ maximal branches. Since we are in case that there are no $(\mu, \aleph_{\alpha}, \kappa)$-Kurepa trees for $\mu \leq \aleph_{\alpha}$, then we can conclude that the tree that $M_{0}$ codes has level with $\aleph_{\alpha}$-many elements and therefore we can see that $|L_{\alpha}^{M_{0}}| = \aleph_{\alpha}$. So, by Lemma \ref{lastdetails}, we have that $|L_{\beta}^{M_{0}}| = \aleph_{\beta}$,  for $\beta \leq \alpha$. So by Lemma \ref{help}, we have that $L_{\beta}^{M_{0}} = L_{\beta}^{M_{1}} = L_{\beta}^{M_{2}}$, for every $\beta \leq \alpha$ and from Corollary \ref{help1} (3) (since $|M_{0}| > \aleph_{\alpha}$), that either $L_{\alpha + 1}^{M_{1}}, L_{\alpha + 1}^{M_{2}}$ are equal to $L_{\alpha + 1}^{M_{0}}$ or they are one point extensions. 
        
    \end{itemize}

    Now, in both of the above subcases, define the amalgam $N$ of $(M_{0}, M_{1}, M_{2})$ to be the union of $M_{0}$ together with all maximal branches in $M_{1}, M_{2}$ (if any). If two maximal branches have the same predecessors, we identify them. It follows that $N \in \mathbf{K}_{\alpha}$ and $M_{i} \prec_{\mathbf{K}_{\alpha}} N, i = 0, 1, 2$.
    
    \item Suppose $\kappa > 2^{\aleph_{\alpha}}$. Then, by Lemma \ref{height}, that means that our tree has height $\aleph_{\alpha + 1}$, or in other words $|L_{\alpha + 1}^{M_{i}}| = \aleph_{\alpha + 1}, i = 0, 1, 2$. So we work similarly to the previous case where we had $|L_{\alpha + 1}^{M_{0}}| = \aleph_{\alpha + 1}$ and find the amalgam of $(M_{0}, M_{1}, M_{2})$.
    \end{itemize}

    \underline{Case 2: $\alpha \geq \omega$.}

    Since we claim that AP fails for all cardinalities, we have to find a counterexample for each $\kappa$.

    Firstly, assume that $\kappa > \aleph_{\alpha}$ and $N$ is a model of $\psi_{\alpha}$ of size $\kappa$. We can define a new model, $M_{0}$ of size $\kappa$ as follows. We define $L_{0}^{M_{0}} = L_{1}^{M_{0}} = \omega$, $L_{n}^{M_{0}} = \omega_{n - 1}$ for every $1 < n < \omega$ (that gives us that $L_{\omega}^{M_{0}} = \omega_{\omega}$) and for each $\beta > \omega$ successor $L_{\beta}^{M_{0}} = L_{\beta}^{N}$ (the limit $L_{\beta}$'s are the union of all the previous, so they may not be the same they where for model $N$). Furthermore define $V(a)^{M_{0}} = V(a)^{N}$ for every $a \in L_{\alpha + 1}$ and observe that it is easy to define the functions $F_{\beta}$, for $\beta \leq \alpha$ and $G$. 
    
    Now, to complete the counterexample, we have to define two more models $M_{1}, M_{2}$. We take as $M_{1}, M_{2}$ the models that everything is the same as in the model $M_{0}$ except that $L_{1}^{M_{1}} = \omega + \omega$ and  $L_{1}^{M_{2}} = \omega + \mathbb{Q}$. We see that the triple $(L_{1}^{M_{0}}, L_{1}^{M_{1}}, L_{1}^{M_{2}})$ cannot be amalgamated. Therefore, our counterexample is the triplet $(M_{0}, M_{1}, M_{2})$.
    
    In the above example observe that the size of the models we define is still $\kappa$, since $\kappa > \aleph_{\alpha}$ is the size of $N$ and we just change the $L_{\beta}$'s for $\beta \leq \omega$.
    
    For $\kappa \leq \aleph_{\alpha}$ we can use the same counterexample as in the previous case.
\end{proof}

We can see that in the previous Theorem we excluded the case $\alpha = 0$. One can notice that if $\alpha = 0$, then the construction is the same as the construction in \cite{Kurepatrees}. All the results in \cite{Kurepatrees} apply for our paper in case $\alpha = 0$. 

More specifically, if $\alpha=0$, the amalgamation property fails for $\aleph_{0}$, while it holds for all uncountable cardinalities that belong to the spectrum.

\section{Consistency results}

In this section the Theorems \ref{forcing}, \ref{consistency}, are direct generalizations to higher cardinals, of Theorems 3.1(1) and 3.8 in \cite{Kurepatrees} . Their proofs are almost identical and included for completeness.

\begin{thm}\label{forcing}
    It is consistent with ZFC that $2^{\aleph_{\alpha}} < \aleph_{\omega_{\alpha + 1}} = \mathcal{B}(\aleph_{\alpha + 1}) < 2^{\aleph_{\alpha + 1}}$ and there exists an $\aleph_{\alpha + 1}$-Kurepa tree with $\aleph_{\omega_{\alpha + 1}}$-many cofinal branches.
\end{thm}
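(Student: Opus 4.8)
The plan is to realise the stated configuration by a single forcing over a model of GCH, running the classical Silver‑type construction that adjoins a Kurepa tree with a prescribed number of cofinal branches, but at $\kappa=\aleph_{\alpha+1}$ in place of $\aleph_1$. Fix $V\models$ GCH (such a $V$ exists if ZFC is consistent) and set $\kappa=\aleph_{\alpha+1}$, $\lambda=\aleph_{\omega_{\alpha+1}}$, so that $\kappa$ is regular, $\operatorname{cf}(\lambda)=\kappa$, and, by GCH, $\lambda^{<\kappa}=\lambda$ while $\lambda^{\kappa}=\lambda^{+}$. Let $\mathbb{K}$ be the poset whose conditions are pairs $(t,g)$, where $t$ is a normal pruned tree on some ordinal $<\kappa$, of successor height $<\kappa$, with all levels of size $<\kappa$, and $g$ is a partial function from $\lambda$ into the top level of $t$ with $|\operatorname{dom} g|<\kappa$; the order end‑extends $t$ and extends $g$ coherently with $<_{t}$. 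This is the standard forcing that adds a $\kappa$-Kurepa tree with exactly $\lambda$ cofinal branches.

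First I would verify the two basic forcing properties. $\mathbb{K}$ is ${<}\kappa$-closed: along a decreasing $\gamma$-chain ($\gamma<\kappa$) one takes the union of the trees, adjoins a top level by normality, and takes the coherent union of the $g$'s, regularity of $\kappa$ keeping all level-sizes $<\kappa$. And $\mathbb{K}$ is $\kappa^{+}$-cc: GCH gives $2^{<\kappa}=\kappa$, so there are only $\kappa$ possible $t$'s; among $\kappa^{+}$ conditions, $\kappa^{+}$ share one $t$, and a $\Delta$-system argument on the size-${<}\kappa$ sets $\operatorname{dom} g$, followed by a thinning using $\kappa^{<\kappa}=\kappa$, yields $\kappa^{+}$ pairwise compatible conditions. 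Hence $\mathbb{K}$ preserves all cardinals and cofinalities, and, being ${<}\kappa$-closed, adds no new bounded subsets of $\kappa$, so $2^{\aleph_{\alpha}}=\aleph_{\alpha+1}<\lambda$ persists. A density argument shows the generic object is a $\kappa$-tree $T^{*}$ whose cofinal branches are exactly the $\lambda$ branches indexed by $\lambda$; thus $KH(\kappa,\lambda)$ holds and $\mathcal{B}(\kappa)\ge\lambda$. For $2^{\kappa}$: since $|\mathbb{K}|=\lambda^{<\kappa}=\lambda$ and $\mathbb{K}$ is $\kappa^{+}$-cc, a nice-name count over $V\models$ GCH gives $2^{\kappa}\le\lambda^{\kappa}=\lambda^{+}$ in $V^{\mathbb{K}}$; the $\lambda$ branches of $T^{*}$ are distinct subsets of $\kappa$, so $2^{\kappa}\ge\lambda$; and $2^{\kappa}\neq\lambda$ because $\operatorname{cf}(2^{\kappa})>\kappa=\operatorname{cf}(\lambda)$ by K\"onig. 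Hence $2^{\aleph_{\alpha+1}}=\lambda^{+}=\aleph_{\omega_{\alpha+1}+1}>\lambda$.

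What remains — and the step I expect to be the main obstacle — is the upper bound $\mathcal{B}(\kappa)\le\lambda$ in $V^{\mathbb{K}}$, for which I would exploit the product-like structure of $\mathbb{K}$. For $I\in[\lambda]^{\le\kappa}$ let $\mathbb{K}_{I}$ be the set of conditions whose $g$-part takes values with indices in $I$: each $\mathbb{K}_{I}$ is a complete subposet of size $\le\kappa^{<\kappa}=\kappa$, $\mathbb{K}$ is the $\kappa^{+}$-directed union of the $\mathbb{K}_{I}$, and over $V^{\mathbb{K}_{I}}$ the quotient $\mathbb{K}/\mathbb{K}_{I}$ is still ${<}\kappa$-closed and $\kappa^{+}$-cc. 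Given any $\kappa$-Kurepa tree $S$ in $V^{\mathbb{K}}$, its underlying set has size $\kappa$, so by $\kappa^{+}$-cc it has a nice name supported on $\le\kappa$ conditions, which all lie in one $\mathbb{K}_{I}$ with $|I|\le\kappa$; hence $S\in V^{\mathbb{K}_{I}}$, and there — as $|\mathbb{K}_{I}|=\kappa$ and $V\models$ GCH — one has $2^{\kappa}\le\kappa^{+}$, so $S$ has at most $\kappa^{+}$ cofinal branches in $V^{\mathbb{K}_{I}}$. It then suffices to show that $\mathbb{K}/\mathbb{K}_{I}$, which merely goes on building $T^{*}$ and its remaining generic branches, adds at most $\lambda$ new cofinal branches to the fixed tree $S$, so that $S$ ends up with $\le\kappa^{+}+\lambda=\lambda$ branches; as $S$ was arbitrary, $\mathcal{B}(\kappa)\le\lambda$, and with $KH(\kappa,\lambda)$ this gives $\mathcal{B}(\aleph_{\alpha+1})=\aleph_{\omega_{\alpha+1}}$, an attained maximum (the only alternative, $\mathcal{B}(\kappa)=\lambda^{+}=2^{\kappa}$, being thereby excluded). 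Upgrading the trivial "$\le 2^{\kappa}=\lambda^{+}$" bound from nice-name counting to the genuine "$\le\lambda$ new branches of $S$" is the delicate point: it cannot be read off from chain-condition or closure properties of $\mathbb{K}/\mathbb{K}_{I}$ alone and requires the concrete combinatorics of $\mathbb{K}$ — essentially, that the only cofinal branches the remaining forcing deliberately adjoins are the $\lambda$ generic branches of $T^{*}$, not branches of an unrelated $S\in V^{\mathbb{K}_{I}}$. Once that is settled we conclude $2^{\aleph_{\alpha}}<\aleph_{\omega_{\alpha+1}}=\mathcal{B}(\aleph_{\alpha+1})<2^{\aleph_{\alpha+1}}$ with $T^{*}$ the required $\aleph_{\alpha+1}$-Kurepa tree, and Lemma~\ref{B(k)} provides an independent sanity check on the cofinality of $\mathcal{B}(\aleph_{\alpha+1})$.
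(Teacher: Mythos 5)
Up to the decisive step your plan coincides with the paper's proof: the same Silver-style poset of pairs (an initial tree of successor height below $\aleph_{\alpha+1}$ with levels of size $\le\aleph_{\alpha}$, together with a small partial labelling of branches by ordinals below $\lambda=\aleph_{\omega_{\alpha+1}}$), the same verification of $\aleph_{\alpha+1}$-closure and of the $\aleph_{\alpha+2}$-chain condition via a $\Delta$-system over a GCH ground model, the same observations that $2^{\aleph_{\alpha}}=\aleph_{\alpha+1}$ is preserved, that the generic object is an $\aleph_{\alpha+1}$-Kurepa tree with $\lambda$ cofinal branches, and the same cofinality (K\"onig) argument separating $\lambda$ from $2^{\aleph_{\alpha+1}}$. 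All of that is correct and essentially identical to Theorem~\ref{forcing} as proved in the paper.

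The genuine gap is the upper bound $\mathcal{B}(\aleph_{\alpha+1})\le\aleph_{\omega_{\alpha+1}}$ in the extension, which you explicitly leave open (``the main obstacle'', ``once that is settled''). This is not a routine detail but the substantive content of the equality $\mathcal{B}(\aleph_{\alpha+1})=\aleph_{\omega_{\alpha+1}}$: without it your argument only gives $\lambda\le\mathcal{B}(\aleph_{\alpha+1})\le 2^{\aleph_{\alpha+1}}=\lambda^{+}$, which is not the theorem, and indeed your own nice-name count shows that crude counting stops at $\lambda^{\kappa}=\lambda^{+}$. The paper disposes of this point by the argument imported from \cite{Kurepatrees}: in the GCH ground model every $\aleph_{\alpha+1}$-Kurepa tree has at most $\aleph_{\alpha+2}$ cofinal branches, and a forcing of size $\aleph_{\omega_{\alpha+1}}$ with the $\aleph_{\alpha+2}$-chain condition cannot add an $\aleph_{\alpha+1}$-Kurepa tree with more than $\aleph_{\omega_{\alpha+1}}$ branches; this is precisely the size-plus-chain-condition reasoning you set aside as insufficient for the quotient. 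Your factorization through the subposets $\mathbb{K}_{I}$ is a reasonable route to a detailed proof, but to complete it you must actually prove the missing claim, e.g.\ show that every cofinal branch of a fixed Kurepa tree $S\in V^{\mathbb{K}_{I}}$ appearing in the full extension already lies in $V^{\mathbb{K}_{I'}}$ for one of at most $\lambda$ many sets $I'$ (or give a homogeneity/isomorphism argument to the same effect), each such intermediate model contributing at most $\aleph_{\alpha+2}$ branches. Your appeal to Lemma~\ref{B(k)} is only a consistency check on the cofinality of $\mathcal{B}(\aleph_{\alpha+1})$ and does not substitute for this bound. As submitted, the proposal is an accurate plan whose central step is missing.
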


\begin{proof}
    Let $V_{0}$ be a model of ZFC+GCH and $P$ be the poset that contains conditions of the form $(t, f)$, where:

    \begin{itemize}
        \item $t$ is a tree of height $\beta + 1$ for some $\beta < \omega_{\alpha + 1}$ and levels of size at most $\aleph_{\alpha}$;
        \item $f$ is a function with $dom(f) \subset \aleph_{\omega_{\alpha + 1}}, |dom(f)| = \aleph_{\alpha}$ and $ran(f) = t_{\beta}$, where $t_{\beta}$ is the $\beta$th level of $t$.
    \end{itemize}

    We can think of $t$ is an initial segment of the generically added tree, and each $f(\delta)$ determines where the $\delta$th branch intersects the tree at level $\beta$. Now, we have to define the order: $(u, g) \leq (t, f)$ if,

    \begin{itemize}
        \item $t$ is an initial segment of $u, dom(f) \subset dom(g)$,
        \item for every $\delta \in dom(f)$, either $f(\delta) = g(\delta)$, if $t$ and $u$ have the same height, or $f(\delta) <_{u} g(\delta)$ otherwise.
        \end{itemize}

        We prove that $P$ is $\aleph_{\alpha + 1}$-closed and has the $\aleph_{\alpha + 2}$-chain condition. 
       
        For the $\aleph_{\alpha + 1}$-closed condition we prove that any decreasing sequence in $P$ of length $< \aleph_{\alpha + 1}$, has a lower bound in $P$. So assume we have such a sequence $(p_{\gamma} | \gamma < \tau)$, $\tau \leq \aleph_{\alpha}$ and for every $\gamma$, $p_{\gamma} = (t_{\gamma}, f_{\gamma})$. 
        
        In order to find the lower bound we define $t = \bigcup_{\gamma < \tau} t_{\gamma}$ (since each tree is initial segment of the next ones, we can do that), which is a tree of height $< \aleph_{\alpha + 1}$ with levels of size at most $\aleph_{\alpha}$. Note here that $t$ must contain a maximal element in order to be an element of $P$. If the union does not contain a maximal level, we add one. 
        
        Next we include maximal branches, but we have to be careful, since the maximal level of the tree maybe too big. We add just the elements we need. 
        More specifically, for every $\delta \in \bigcup_{\gamma} dom(f_{\gamma})$, we add a new element given by the branch $ \bigcup_{\gamma < \tau\text{ and } \delta \in dom(f_{\gamma})} f_{\gamma}(\delta)$.
             
        For the second coordinate, we define $f$ as follows: $dom(f) = \bigcup_{\gamma} dom(f_{\gamma})$ and for every $\delta \in dom(f)$, we take the branch $ \bigcup_{\gamma < \tau\text{ and } \delta \in dom(f_{\gamma})} f_{\gamma}(\delta)$ and define $f(\delta)$ to be the element of the $\beta$th level given by this branch. 
        
        In order to see the $\aleph_{\alpha + 2}$-chain condition, we check that every antichain has size at most $\aleph_{\alpha + 1}$. Towards contradiction suppose that $(p_{\gamma} | \gamma < \aleph_{\alpha + 2})$ is an antichain in $P$, where $p_{\gamma} = (t_{\gamma}, f_{\gamma})$. We see that there exist only $2^{\aleph_{\alpha}} = \aleph_{\alpha + 1}$-many possibilities for the first coordinate. So there are $\aleph_{\alpha + 2}$-many pairs with the same first coordinate and therefore, we assume that $t_\gamma$ is equal to some $t$ for all $\gamma$. 
        Since the $p_\gamma$'s are different, this means that the $f_\gamma$'s are different. 
        
        Take $W = \{ dom(f_{\gamma}) | \gamma<\aleph_{\alpha+2}\}$. We easily see that $|W| = \aleph_{\alpha + 2}$. From a $\Delta$-system lemma on $W$, we have that $\exists Z \subseteq W$, with $|Z| = \aleph_{\alpha + 2}$ such that $\forall \gamma_{1} \neq \gamma_{2}$, $dom(f_{\gamma_{1}}) \cap dom(f_{\gamma_{2}}) = R$. 

        Since $|R| \leq \aleph_{\alpha}$, the functions from $R$ to the $\beta^{th}$ level of $t$ are at most $\aleph_{\alpha}^{\aleph_{\alpha}} = 2^{\aleph_{\alpha}} = \aleph_{\alpha + 1}$. But we have $\aleph_{\alpha + 2}$-many functions in $Z$. Therefore, there exist $\gamma_{1} \neq \gamma_{2}$, such that $f_{\gamma_{1}}|_{R} = f_{\gamma_{2}}|_{R}$ and the pairs $(t_{\gamma_{1}}, f_{\gamma_{1}})$ and $(t_{\gamma_{2}}, f_{\gamma_{2}})$ are compatible. Contradiction. This finishes the proof that $P$ is $\aleph_{\alpha + 1}$-closed and has the $\aleph_{\alpha + 2}$-chain condition.

       Now, suppose that $H$ is $P$-generic over $V_{0}$. Then $\bigcup_{(t, f) \in H} t$ is an $\aleph_{\alpha + 1}$-Kurepa tree with $\aleph_{\omega_{\alpha + 1}}$-many branches, where for $\delta < \aleph_{\omega_{\alpha + 1}}$, the $\delta$th branch is given by $\bigcup_{(t, f) \in H, \delta \in dom(f)} f(\delta)$. These branches are distinct by standard density arguments. 

       We claim that in this model $\aleph_{\omega_{\alpha + 1}} = \mathcal{B}(\aleph_{\alpha + 1})$. That is because in the ground model all the $\aleph_{\alpha + 1}$-Kurepa trees are of size $\aleph_{\alpha + 2}$ and since the forcing notion is of size $\aleph_{\omega_{\alpha + 1}}$ and has the $\aleph_{\alpha + 2}$-chain condition cannot add an $\aleph_{\alpha + 1}$-Kurepa tree larger than $\aleph_{\omega_{\alpha + 1}}$.
       
       Also, note that since $\mathcal{B}(\aleph_{\alpha + 1})$ cannot exceed $2^{\aleph_{\alpha + 1}}$, we have that $\aleph_{\omega_{\alpha + 1}}= \mathcal{B}(\aleph_{\alpha + 1}) < 2^{\aleph_{\alpha + 1}}$ (the inequality is strict, because of cofinality considerations).
       
       Finally, note that the forcing notion is $\aleph_{\alpha + 1}$-closed and then it does not add new subsets of $\aleph_{\alpha}$, which means that $2^{\aleph_{\alpha}} = \aleph_{\alpha + 1}$ in the new model too. The model $V_{0}[H]$ proves the Theorem.
\end{proof}

\begin{cor}\label{last}
    In Theorem \ref{forcing}, we can replace $\aleph_{\omega_{\alpha + 1}}$ with any $\lambda \geq \aleph_{\alpha + 2}$ and $cf(\lambda) > \aleph_{\alpha}$. The only modification which might happen is that the conclusion would be $\mathcal{B}(\aleph_{\alpha + 1})\le 2^{\aleph_{\alpha + 1}}$ instead of $\mathcal{B}(\aleph_{\alpha + 1})< 2^{\aleph_{\alpha + 1}}$ (we may not be able to use the cofinality argument).
\end{cor}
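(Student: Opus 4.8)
The plan is to replay the proof of Theorem~\ref{forcing} almost word for word, replacing $\aleph_{\omega_{\alpha+1}}$ by $\lambda$ throughout: let $P_\lambda$ be the poset of pairs $(t,f)$ where $t$ is a tree of height $\beta+1$ for some $\beta<\omega_{\alpha+1}$ with levels of size $\le\aleph_\alpha$, and $f$ is a function with $\mathrm{dom}(f)\subset\lambda$, $|\mathrm{dom}(f)|=\aleph_\alpha$ and $\mathrm{ran}(f)=t_\beta$, ordered exactly as before. I would then check that the four ingredients of the original argument survive unchanged. The lower-bound construction witnessing $\aleph_{\alpha+1}$-closure is the same: for a decreasing sequence of length $\tau\le\aleph_\alpha$ one takes $t=\bigcup_{\gamma<\tau}t_\gamma$ (its height is still $<\aleph_{\alpha+1}$ by regularity of $\aleph_{\alpha+1}$), adds the top level given by the branches $\bigcup\{f_\gamma(\delta):\delta\in\mathrm{dom}(f_\gamma)\}$ for $\delta\in\bigcup_\gamma\mathrm{dom}(f_\gamma)$, and reads $f$ off that level; since $\bigl|\bigcup_\gamma\mathrm{dom}(f_\gamma)\bigr|\le\aleph_\alpha<\lambda$ this is a condition, and no hypothesis on $cf(\lambda)$ is needed for this step. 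The $\aleph_{\alpha+2}$-chain condition argument takes place in $V_0\models\mathrm{GCH}$ and is verbatim: there are $2^{\aleph_\alpha}=\aleph_{\alpha+1}$ possible first coordinates, so an antichain of size $\aleph_{\alpha+2}$ has $\aleph_{\alpha+2}$ members with a common $t$; the $\Delta$-system lemma (legitimate since $\aleph_{\alpha+2}$ is regular and $\mu^{\aleph_\alpha}<\aleph_{\alpha+2}$ for $\mu<\aleph_{\alpha+2}$ under GCH) yields a root $R$ with $|R|\le\aleph_\alpha$, and among $\aleph_{\alpha+2}$ many functions $R\to t_\beta$ — of which there are only $\aleph_\alpha^{\aleph_\alpha}=\aleph_{\alpha+1}$ — two agree on $R$, producing compatible conditions.

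The hypothesis $cf(\lambda)>\aleph_\alpha$ enters at exactly one point: to keep $|P_\lambda|=\lambda$. Indeed the number of admissible trees $t$ is at most $\aleph_{\alpha+1}$, and the number of admissible $f$'s is at most $|[\lambda]^{\aleph_\alpha}|\cdot\aleph_\alpha^{\aleph_\alpha}=\lambda^{\aleph_\alpha}=\lambda$, where the last equality uses $cf(\lambda)>\aleph_\alpha$ together with GCH in $V_0$; hence $|P_\lambda|=\lambda$. (If one only had $cf(\lambda)=\aleph_\alpha$ this would become $\lambda^+$ and the argument below would break, which is why the hypothesis is stated.) Now let $H$ be $P_\lambda$-generic over $V_0$. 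As in the original proof, $\bigcup_{(t,f)\in H}t$ is an $\aleph_{\alpha+1}$-Kurepa tree whose $\lambda$ branches $\bigl(\bigcup_{(t,f)\in H,\ \delta\in\mathrm{dom}(f)}f(\delta)\bigr)_{\delta<\lambda}$ are pairwise distinct by the standard density arguments, and since in $V_0$ every $\aleph_{\alpha+1}$-Kurepa tree has at most $\aleph_{\alpha+2}$ cofinal branches while $P_\lambda$ has size $\lambda$ and the $\aleph_{\alpha+2}$-cc, no $\aleph_{\alpha+1}$-Kurepa tree with more than $\lambda$ branches is added; thus $\mathcal{B}(\aleph_{\alpha+1})=\lambda$ in $V_0[H]$. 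Finally $\aleph_{\alpha+1}$-closure adds no new subsets of $\aleph_\alpha$, so $2^{\aleph_\alpha}=\aleph_{\alpha+1}$ still holds, and altogether $2^{\aleph_\alpha}=\aleph_{\alpha+1}<\lambda=\mathcal{B}(\aleph_{\alpha+1})\le 2^{\aleph_{\alpha+1}}$, the last inequality because $\mathcal{B}(\aleph_{\alpha+1})$ can never exceed $2^{\aleph_{\alpha+1}}$.

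The only genuine change from Theorem~\ref{forcing} is the last inequality, and tracking it is the part I would be most careful about. In the original, $cf(\aleph_{\omega_{\alpha+1}})=\aleph_{\alpha+1}$, so König's lemma forces $2^{\aleph_{\alpha+1}}\ne\aleph_{\omega_{\alpha+1}}$ and the inequality is strict. For a general $\lambda$ with $cf(\lambda)>\aleph_\alpha$ the cofinality may well be $\ge\aleph_{\alpha+2}$, in which case König's lemma no longer excludes $2^{\aleph_{\alpha+1}}=\lambda$; indeed, counting nice names for subsets of $\omega_{\alpha+1}$ over the size-$\lambda$, $\aleph_{\alpha+2}$-cc poset $P_\lambda$ gives $2^{\aleph_{\alpha+1}}\le\lambda^{\aleph_{\alpha+1}}$, which under GCH in $V_0$ equals $\lambda$ precisely when $cf(\lambda)>\aleph_{\alpha+1}$ (and $\lambda^+$ when $cf(\lambda)=\aleph_{\alpha+1}$). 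So when $cf(\lambda)=\aleph_{\alpha+1}$ one still gets $\mathcal{B}(\aleph_{\alpha+1})=\lambda<\lambda^+=2^{\aleph_{\alpha+1}}$ as before, while for $cf(\lambda)\ge\aleph_{\alpha+2}$ one may have $\mathcal{B}(\aleph_{\alpha+1})=\lambda=2^{\aleph_{\alpha+1}}$ — hence the conclusion is weakened to $\mathcal{B}(\aleph_{\alpha+1})\le 2^{\aleph_{\alpha+1}}$, exactly as the statement asserts. The one computational subtlety to pin down carefully is precisely the claim that $\mathcal{B}(\aleph_{\alpha+1})$ lands on $\lambda$ and not somewhere between $\lambda$ and $\lambda^{\aleph_{\alpha+1}}$, which is where the ground-model bound on Kurepa trees is combined with the size and chain condition of $P_\lambda$, just as in the proof of Theorem~\ref{forcing}.
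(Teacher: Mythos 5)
Your proposal is correct and follows exactly the route the paper intends for this corollary: rerun the proof of Theorem \ref{forcing} with $\lambda$ in place of $\aleph_{\omega_{\alpha+1}}$, observe that closure, the $\aleph_{\alpha+2}$-c.c., and the bound $\mathcal{B}(\aleph_{\alpha+1})=\lambda$ survive (with $cf(\lambda)>\aleph_{\alpha}$ ensuring $|P_{\lambda}|=\lambda^{\aleph_{\alpha}}=\lambda$ under GCH), and note that only the final strictness, which rested on K\"onig's cofinality argument for $\aleph_{\omega_{\alpha+1}}$, may be lost. Your extra case analysis (strictness persists when $cf(\lambda)=\aleph_{\alpha+1}$, while $\mathcal{B}(\aleph_{\alpha+1})=2^{\aleph_{\alpha+1}}=\lambda$ when $cf(\lambda)\ge\aleph_{\alpha+2}$) is a correct refinement of the parenthetical remark in the statement.
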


\begin{defin}\label{k-linked}
    Let $\kappa$ be a regular cardinal. 
    \begin{enumerate}
        \item A poset $P$ is $\mathbf{stationary}$ $\mathbf{\kappa^{+}}$-$\mathbf{linked}$, if for every sequence of conditions $(p_{\gamma} | \gamma < \kappa^{+})$, there is a regressive function $f: \kappa^{+} \to \kappa^{+}$, such that for some club $C \subset \kappa^{+}$, for all $\gamma, \delta \in C$ with cofinality $\kappa, f(\gamma) = f(\delta)$ implies $p_{\gamma}$ and $p_{\delta}$ are compatible.
        \item A poset is $\mathbf{well}$-$\mathbf{met}$ if every two compatible conditions have a greatest lower bound.
     \item Let $\Gamma_{\kappa}$ be the class of $\kappa$-closed, stationary $\kappa^{+}$-linked, well met poset with greatest lower bounds.
     \item  For a regular $\kappa$, $\mathbf{GMA_{\kappa}}$ states that for every $P \in \Gamma_{\kappa}$, and for every collection of less than $2^{\kappa}$ many dense sets there is a filter for $P$ meeting them.
     \end{enumerate}
\end{defin}

\begin{defin}\label{SMP}
    For regular $\kappa$, $\mathbf{SMP_{n}(\kappa)}$ is the conjunction the following:
    \begin{itemize}
        \item $\kappa^{< \kappa} = \kappa$;
        \item for any $\Sigma_{n}$ statement $\phi$ with parameters in $H(2^{\kappa})$ and any $P \in \Gamma_{\kappa}$, if for all $P$-generic $G$, and all $\kappa$-closed, $\kappa^{+}$-c.c. posets $Q \in V[G]$, we have that $V[G] \models (1_{Q}$ forces $\phi$), then $\phi$ is true in $V$.
    \end{itemize}
    $\mathbf{SMP(\kappa)}$ is the statement that $SMP_{n}(\kappa)$ holds for all $n$.
\end{defin}

\underline{Fact:}(from \cite{15}) If $\kappa$ satisfies $\kappa^{< \kappa} = \kappa$, then a model of $SMP(\kappa)$ can be forced starting from a Mahlo cardinal $\theta > \kappa$.

The following Proposition is also due to \cite{15}.

\begin{prop}\label{SMP-GMA}
    Let $\kappa$ be a regular cardinal.
    \begin{enumerate}
        \item If for all $\tau < 2^{\kappa}, \tau^{< \kappa} < 2^{\kappa}$, then $SMP_{1}(\kappa)$ implies that $GMA_{\kappa}$.
        \item $SMP_{2}(\kappa)$ implies that $2^{\kappa}$ is weakly inaccessible and that for all $\tau < 2^{\kappa}, \tau^{< \kappa} < 2^{\kappa}$.
        \item $SMP_{2}(\kappa)$ implies that every $\Sigma_{1}^{1}$-subset of $\kappa^{\kappa}$ of cardinality $2^{\kappa}$ contains a perfect set.
    \end{enumerate}
\end{prop}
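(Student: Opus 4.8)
All three parts follow the same pattern. To show that a statement $\Theta$ holds in $V$ one rewrites $\Theta$ — or a convenient strengthening of it — as a $\Sigma_{1}$ (for part (1)) or $\Sigma_{2}$ (for parts (2), (3)) formula whose parameters lie in $H(2^{\kappa})$, exhibits a poset $P\in\Gamma_{\kappa}$ after which $\Theta$ is forced and stays forced under every further $\kappa$-closed, $\kappa^{+}$-c.c.\ extension, and then applies $SMP_{1}(\kappa)$, respectively $SMP_{2}(\kappa)$. The recurring technical point is that a witness for $\Theta$ must itself be an element of $H(2^{\kappa})$; this is exactly why the arithmetic hypothesis is needed in (1) and why the statements reflected in (2), (3) must be phrased in terms of \emph{small} objects.

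For (1), fix $P\in\Gamma_{\kappa}$ and dense sets $\langle D_{\xi}:\xi<\lambda\rangle$ with $\lambda<2^{\kappa}$. First replace $P$ by a small suborder: using $\lambda^{<\kappa}<2^{\kappa}$, take an elementary submodel $N\prec H(\theta)$ with the relevant parameters in $N$, $|N|=\lambda^{<\kappa}$ and $N^{<\kappa}\subseteq N$, and set $P':=P\cap N$. The closure of $N$ together with elementarity makes $P'$ again $\kappa$-closed and well met with greatest lower bounds, and one checks it remains stationary $\kappa^{+}$-linked, so $P'\in\Gamma_{\kappa}$; moreover $|P'|\le\lambda^{<\kappa}<2^{\kappa}$, and by elementarity any filter on $P'$ meeting each $D_{\xi}\cap N$ generates a filter on $P$ meeting each $D_{\xi}$. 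Now ``there is a filter on $P'$ meeting every $D_{\xi}\cap N$'' is $\Sigma_{1}$ with all parameters in $H(2^{\kappa})$ and is upward absolute; forcing with $P'$ itself yields a generic filter meeting all these dense sets, and this persists in every further extension, so the hypothesis of $SMP_{1}(\kappa)$ is met with the poset $P'$. Hence such a filter already exists in $V$, which is $GMA_{\kappa}$.

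For (2) and (3) one reflects, respectively derives, more delicate statements, following \cite{15}. For (2) the reflected $\Sigma_{2}$ statements concern cardinal arithmetic — that $2^{\kappa}$ is not a successor, that it is regular, and that $\tau^{<\kappa}<2^{\kappa}$ for $\tau<2^{\kappa}$ — and for each one locates a $\kappa$-closed, $\kappa^{+}$-c.c.\ forcing (of $\mathrm{Add}(\kappa,\cdot)$-type) generically witnessing the desired arithmetic uniformly after every $P\in\Gamma_{\kappa}$, so that $SMP_{2}(\kappa)$ transfers it to $V$; keeping these assertions parametrized below $2^{\kappa}$ — hence arguing with $\tau^{<\kappa}$ and with cofinalities rather than with $2^{\kappa}$ itself — is the non-routine bookkeeping. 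Part (3) is then obtained from (2) by a generalized Cantor–Bendixson analysis: given $A=p[T]$ with $T$ a size-$\kappa$ tree on $\kappa\times\kappa$ and $|A|=2^{\kappa}$, iterate the derivative that discards the nodes whose subtree projects to a set of size $<2^{\kappa}$; regularity of $2^{\kappa}$ forces the kernel to be non-empty, and two ``thick'' immediate successors below each kernel node supply the splitting demanded of a $\kappa$-perfect subtree (equivalently, the resulting perfect tree comes with a level-preserving embedding into $T$, which is a size-$\kappa$ witness, so the existence of a perfect subset is even $\Sigma_{1}$ over $H(2^{\kappa})$ and could alternatively be reflected via $SMP_{1}(\kappa)$). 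The genuine obstacle is the behaviour at limit levels $<\kappa$, where a node all of whose predecessors survive need not itself survive; overcoming this by a fusion argument exploiting the weak inaccessibility of $2^{\kappa}$ is the crux, and we follow \cite{15}.
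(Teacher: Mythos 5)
First, note the paper itself does not prove this proposition: it is quoted as a known result from \cite{15}, so there is no in-paper argument to compare yours with; your sketch has to stand on its own. Part (1) essentially does: the elementary-submodel reduction to a small poset $P'=P\cap N$ and the reflection of the $\Sigma_1$ statement ``there is a filter on $P'$ meeting each $D_\xi\cap N$'' is the standard route, modulo routine repairs (replace $P'$ by an isomorphic copy on an ordinal so the parameters are literally in $H(2^{\kappa})$, and note that compatibility in $P$ yields compatibility in $P'$ because the greatest lower bound of two elements of $N$ is definable from them and hence lies in $N$, which is what makes stationary $\kappa^{+}$-linkedness transfer).

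Parts (2) and (3) have genuine gaps. For (2), you cannot reflect ``$2^{\kappa}$ is regular'' or ``$2^{\kappa}$ is not a successor'' directly, and your phrase ``locates a $\kappa$-closed, $\kappa^{+}$-c.c.\ forcing witnessing the desired arithmetic'' has the quantifiers backwards: $SMP_n(\kappa)$ demands one $P\in\Gamma_{\kappa}$ after which \emph{every} further $\kappa$-closed $\kappa^{+}$-c.c.\ $Q$ forces $\varphi$; you do not get to choose $Q$, and after any $P$ some further $\mathrm{Add}(\kappa,\mu^{+})$ with $(\mu^{+})^{\kappa}=\mu^{+}$ makes $2^{\kappa}$ a successor, while a suitable $\mathrm{Add}(\kappa,\mu)$ can make it singular, so neither statement is forced by all such $Q$. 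The statements one can reflect are the persistent ones, e.g.\ ``$\mu^{++}$ (resp.\ $(\tau^{<\kappa})^{+}$) injects into $\mathcal{P}(\kappa)$'' with parameter $\mu$ (resp.\ $\tau$) in $H(2^{\kappa})$, forced by $\mathrm{Add}(\kappa,\mu^{++})$ and upward absolute since cardinals and ${<}\kappa$-sequences are preserved; this yields that $2^{\kappa}$ is a limit cardinal and that $\tau^{<\kappa}<2^{\kappa}$, and regularity must then be obtained indirectly (e.g.\ via (1): $GMA_{\kappa}$ gives $2^{\tau}=2^{\kappa}$ for $\kappa\le\tau<2^{\kappa}$, and K\"onig's theorem rules out $\mathrm{cf}(2^{\kappa})<2^{\kappa}$). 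For (3), the reduction ``obtained from (2)'' is wrong in principle, not just incomplete at limit levels: the perfect set property for closed, let alone $\Sigma^1_1$, subsets of $\kappa^{\kappa}$ of size $2^{\kappa}$ does not follow from $2^{\kappa}$ being weakly inaccessible together with $\tau^{<\kappa}<2^{\kappa}$. Indeed, the paper's own forcing (Theorem \ref{forcing}, Corollary \ref{last}), applied over a model with GCH and $\lambda$ inaccessible so as to add an $\aleph_{\alpha+1}$-Kurepa tree with $\lambda=2^{\aleph_{\alpha+1}}$ branches, produces exactly the arithmetic of (2) together with a closed set of size $2^{\kappa}$ with no perfect subset (the branch set of a Kurepa tree cannot contain a perfect set, as shown inside the proof of Theorem \ref{consistency}). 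So the limit-level ``fusion'' you defer to \cite{15} is not a fixable technicality of a ZFC Cantor--Bendixson argument; (3) must use the forcing axiom again, e.g.\ by reflecting the persistent statement ``there is a perfect level-preserving embedding into $T$ whose branches project into $p[T]$'' (a size-$\kappa$ witness, parameter $T\in H(2^{\kappa})$) after showing that some poset in $\Gamma_{\kappa}$ adds such an embedding whenever $|p[T]|=2^{\kappa}$ — and that step, which is the entire content, is absent from your sketch.
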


We are now ready to prove the following Theorem.

\begin{thm}\label{consistency}
    From a Mahlo cardinal, it is consistent with ZFC that $2^{\aleph_{\alpha}} < \mathcal{B}(\aleph_{\alpha + 1}) = 2^{\aleph_{\alpha + 1}}$, for every $\kappa < 2^{\aleph_{\alpha + 1}}$ there is an $\aleph_{\alpha + 1}$-Kurepa tree with at least $\kappa$-many maximal branches, but no $\aleph_{\alpha + 1}$-Kurepa tree has $2^{\aleph_{\alpha + 1}}$-many maximal branches.
\end{thm}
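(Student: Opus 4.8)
The plan is to adapt the construction from Theorem \ref{forcing} (and its Corollary \ref{last}), combined with the machinery of $SMP(\kappa)$ and $GMA_\kappa$ from Proposition \ref{SMP-GMA}, applied at $\kappa = \aleph_{\alpha+1}$. Start in a ground model $V_0$ of ZFC + GCH and fix $\kappa = \aleph_{\alpha+1}$; note $\kappa^{<\kappa} = \kappa$ holds by GCH since $\kappa$ is a successor cardinal. By the Fact quoted from \cite{15}, from a Mahlo cardinal $\theta > \kappa$ we may force to obtain a model $V$ of $SMP(\kappa)$. Then Proposition \ref{SMP-GMA}(2) gives us that $2^{\aleph_{\alpha+1}} = 2^\kappa$ is weakly inaccessible and that $\tau^{<\kappa} < 2^\kappa$ for all $\tau < 2^\kappa$; part (1) then yields $GMA_\kappa$. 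We should also check that the forcing producing $V$ does not disturb $2^{\aleph_\alpha}$, so that $2^{\aleph_\alpha} < 2^{\aleph_{\alpha+1}}$ is preserved — this should follow because the relevant iteration is $\kappa$-closed (it does not add subsets of $\aleph_\alpha$), so $2^{\aleph_\alpha}$ stays $\aleph_{\alpha+1}$.

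Next I would establish, working in $V$, that for each cardinal $\lambda$ with $\aleph_{\alpha+2} \le \lambda < 2^{\aleph_{\alpha+1}}$ there is an $\aleph_{\alpha+1}$-Kurepa tree with at least $\lambda$ cofinal branches. The poset $P_\lambda$ of Theorem \ref{forcing} / Corollary \ref{last} — conditions $(t,f)$ with $t$ an initial segment of the generic tree and $f$ coding $\lambda$-many branch markers — is $\kappa$-closed, and one checks it is well-met with greatest lower bounds and stationary $\kappa^+$-linked, so that $P_\lambda \in \Gamma_\kappa$. The $\kappa^+$-linkedness comes from the $\Delta$-system plus counting argument already used to prove the $\aleph_{\alpha+2}$-c.c. in Theorem \ref{forcing}: conditions with the same tree part and agreeing on the root $R$ of the $\Delta$-system are compatible, and since $|R| \le \aleph_\alpha$ there are only $2^{\aleph_\alpha} = \kappa$ many such patterns, giving the required regressive coloring on a club of points of cofinality $\kappa$. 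Since $|P_\lambda| \le \lambda < 2^\kappa$, the relevant collection of fewer than $2^\kappa$ dense sets (density requirements forcing the tree to have full height $\kappa$, all levels nonempty, and the $\lambda$ branches distinct) is met by a $GMA_\kappa$-filter, and the union of the tree parts along that filter is the desired $\aleph_{\alpha+1}$-Kurepa tree with $\ge \lambda$ branches. Running this over a cofinal sequence of $\lambda$'s below $2^{\aleph_{\alpha+1}}$ (using weak inaccessibility of $2^{\aleph_{\alpha+1}}$) gives $\mathcal{B}(\aleph_{\alpha+1}) = 2^{\aleph_{\alpha+1}}$; it is a strict supremum, i.e. $\mathcal{B}(\aleph_{\alpha+1})$ is not attained.

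The final and most delicate point is that no $\aleph_{\alpha+1}$-Kurepa tree has exactly $2^{\aleph_{\alpha+1}}$ cofinal branches. This is where Proposition \ref{SMP-GMA}(3) enters: the set of cofinal branches of a tree $T$ of height $\kappa$ with levels of size $<\kappa$ is naturally a $\Sigma^1_1$ (indeed closed) subset of $\kappa^\kappa$ — a sequence is a branch iff it is $<_T$-increasing and hits every level, which is a closed condition definable with $T$ as parameter. If such a tree had $2^\kappa$ many branches, part (3) forces the branch set to contain a perfect subset of $\kappa^\kappa$, hence to have size $2^\kappa$ together with a perfect tree of branchings; but a perfect set of branches through a $\kappa$-tree forces some level to have size $\ge \kappa$ (one needs $\kappa$-many splitting nodes cofinally, and the standard argument shows the tree of splitting nodes embeds $2^{<\kappa}$, blowing up a level), contradicting the $\kappa$-tree requirement that every level has $<\kappa$ elements. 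I expect this last step — precisely checking that "$\Sigma^1_1$ set of size $2^\kappa$ $\Rightarrow$ perfect set" is incompatible with being the branch set of a genuine $\kappa$-tree (levels strictly below $\kappa$), as opposed to a weak $\kappa$-Kurepa tree — to be the main obstacle, since it requires care about the higher Baire space $\kappa^\kappa$ and the exact definition of perfect set there; the resolution is that a perfect set of branches yields a level of size $\ge \kappa$, which is exactly forbidden for $\kappa$-Kurepa (but not weak $\kappa$-Kurepa) trees, so the conclusion is about genuine $\aleph_{\alpha+1}$-Kurepa trees.
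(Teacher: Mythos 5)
Your proposal follows essentially the same route as the paper's proof: force $SMP_{2}(\aleph_{\alpha+1})$ from a Mahlo, extract $GMA_{\aleph_{\alpha+1}}$, $2^{\aleph_{\alpha}}=\aleph_{\alpha+1}$, weak inaccessibility of $2^{\aleph_{\alpha+1}}$ and the $\Sigma^{1}_{1}$ perfect-set property, apply $GMA_{\aleph_{\alpha+1}}$ to the Kurepa-tree poset of Theorem \ref{forcing} for each $\lambda<2^{\aleph_{\alpha+1}}$, and exclude a Kurepa tree with $2^{\aleph_{\alpha+1}}$ branches by the splitting argument showing its branch set contains no perfect set (the splitting producing $2^{\aleph_{\alpha}}$ nodes on a single level, contradicting levels of size $\leq\aleph_{\alpha}$). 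The one detail where your sketch is off is the verification that the poset is stationary $\aleph_{\alpha+2}$-linked: a $\Delta$-system extraction yields only a large compatible subfamily (enough for the chain condition) and does not by itself furnish the required regressive function on a club, whereas the paper colors each condition $p_{\gamma}=(t,f_{\gamma})$ by the trace $f_{\gamma}|_{\gamma}$ (bounded below $\gamma$ at cofinality-$\aleph_{\alpha+1}$ points) and takes a club lying above the suprema of earlier domains — a local repair rather than a change of approach.
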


\begin{proof}

    Take $V$ to be a model of $SMP_{2}(\aleph_{\alpha + 1})$. By Proposition \ref{SMP-GMA}, in $V$ we have that (1) $GMA_{\aleph_{\alpha + 1}}$, 
 (2) $2^{\aleph_{\alpha}} = \aleph_{\alpha + 1}$, (3) $2^{\aleph_{\alpha + 1}}$ is weakly inaccessible and (4) every $\Sigma_{1}^{1}$-subset of $\aleph_{\alpha + 1}^{\aleph_{\alpha + 1}}$ of cardinality $2^{\aleph_{\alpha + 1}}$ contains a perfect set.

    We prove that this is the model we are looking for. 

    Assume $\aleph_{\alpha + 1} < \lambda < 2^{\aleph_{\alpha + 1}}$. In order to prove that there is an $\aleph_{\alpha + 1}$-Kurepa tree with at least $\lambda$-many maximal branches, we define $P$ to be the poset to add such a tree, i.e. we take the same poset as we did in \ref{forcing} but with $\lambda$ instead of $\aleph_{\omega_{\alpha + 1}}$. Briefly, we remind that the conditions are pairs $(t, f)$, where:
    \begin{itemize}
         \item $t$ is a tree of height $\beta + 1$ for some $\beta < \omega_{\alpha + 1}$ and levels of size at most $\aleph_{\alpha}$;
        \item $f$ is a function with $dom(f) \subset \lambda, |dom(f)| = \aleph_{\alpha}$ and $ran(f) = t_{\beta}$, where $t_{\beta}$ is the $\beta$th level of $t$.
    \end{itemize}

    The order is defined similarly as in the poset in Theorem \ref{forcing}.

    We prove that $P$ is in $\Gamma_{\aleph_{\alpha + 1}}$ and by $GMA_{\aleph_{\alpha + 1}}$ it follows that in the extension there is an $\aleph_{\alpha + 1}$-Kurepa tree with $\lambda$-many cofinal branches. 
    
    We will only check that $P$ is stationary $\aleph_{\alpha + 2}$-linked. Take a sequence of conditions $(p_{\gamma} | \gamma < \aleph_{\alpha + 2})$, where $p_{\gamma} = (t_{\gamma}, f_{\gamma})$. As in the proof of Theorem \ref{forcing}, we can assume that all $t_{\gamma}$'s are equal, since we have only $2^{\aleph_{\alpha}} = \aleph_{\alpha + 1}$-many possibilities for the first coordinate. 

    First we prove it for $\lambda = \aleph_{\alpha + 2}$. We define the regressive function $h(\gamma) = f_{\gamma}|_{\gamma}$ (the restriction of the corresponding function on $\gamma$). We now define $C \subseteq \omega_{\alpha + 2}$ to be a club, such that if $\gamma_{1}, \gamma_{2} \in C$ with $\gamma_{1} < \gamma_{2}$, then $\gamma_{2}$ should be above of $dom(f_{\gamma_{1}})$. We now observe that if $\gamma_{1}, \gamma_{2} \in C$ with $cf(\gamma_{1}) = cf(\gamma_{2}) = \aleph_{\alpha + 1}$ and $h(\gamma_{1}) = h(\gamma_{2})$ (i.e. $f_{\gamma_{1}}|_{\gamma_{1}} = f_{\gamma_{2}}|_{\gamma_{2}}$), then $f_{\gamma_{1}}$ and $f_{\gamma_{2}}$ are compatible. Indeed if we have that $f_{\gamma_{1}}|_{\gamma_{1}} = f_{\gamma_{2}}|_{\gamma_{2}}$, it means that the two functions agree on the part of $dom(f_{\gamma_{1}})$ below $\gamma_{1}$ and furthermore that $dom(f_{\gamma_{2}}) \cap [\gamma_{1}, \gamma_{2}) = \emptyset$. So, we have proven the desired property.

    In the case $\lambda > \aleph_{\alpha + 2}$, we can assign the $\aleph_{\alpha + 2}$-many domains of the functions we have in the sequence to sets below $\omega_{\alpha + 2}$ (preserving the order in which they appear and their intersections) and then work as in the previous case.
    
    Finally, we use the fourth property from those listed in the beginning of the proof, to prove that there are no $\aleph_{\alpha + 1}$-Kurepa trees with $2^{\aleph_{\alpha + 1}}$-many cofinal branches. Let $T$ be an $\aleph_{\alpha + 1}$-Kurepa tree. We prove that the set of branches $[T]$ is a closed set that does not contain a perfect set. In order to prove that, take $g: 2^{\omega_{\alpha + 1}} \to \omega_{\alpha + 1}^{\omega_{\alpha + 1}}$ to be a continuous injection with range contained in $[T]$. Then, we build sequences $(p_{s} | s \in 2^{< \omega_{\alpha}})$ and $(\beta_{\gamma} | \gamma < \omega_{\alpha})$, such that:
    
    \begin{itemize}
        \item if $s' \supset s$, then $p_{s} <_{T} p_{s'}$,
        \item if the order type of $s$ is $\gamma$, then $\beta_{\gamma} = domp_{s}$,
        \item for each $s, p_{s \frown 0} \neq p_{s \frown 1}$.
        \item at limit stages we take the unions
    \end{itemize}

    The idea here is to find a sequence $p_{s}$ of nodes of $T$, which will give us $2^{\omega_{\alpha}}$-many different branches before the maximal level of the tree. The $\beta_{\gamma}$'s are the level where the $p_s$'s split.

    We do this by induction on the order type of $s$, using continuity. Then let $\beta = sup_{\gamma} \beta_{\gamma}$ and for all $\eta \in 2^{\omega_{\alpha}}$, let $p_{\eta} = \bigcup_{\gamma} p_{\eta|_{\gamma}}$. But then if $\eta \neq \delta, p_{\eta} \neq p_{\delta}$. So, the $\beta$th level of the tree has $2^{\aleph_{\alpha}}$-many elements. Contradiction with $T$ being an $\aleph_{\alpha + 1}$-Kurepa tree (We may have the same contradiction, before we reach level $\beta$. More specifically, at some stage limit there is a possibility, that we take $> \aleph_{\alpha}$-many elements. In such case we stop the induction at that level.). So, $[T]$ does not contain a perfect set. By property (4) (from those we stated in the beginning of the proof), we have that $|[T]| < 2^{\aleph_{\alpha + 1}}$.
\end{proof}

\begin{cor}\label{results1}
    For every $\alpha$ countable ordinal, there exists an $\mathcal{L}_{\omega_1, \omega}$-sentence $\psi_{\alpha}$ that it is consistent with ZFC that:
    \begin{enumerate}
        \item $2^{\aleph_{\alpha}} \leq \aleph_{\omega_{\alpha + 1}}$ and $\psi_{\alpha}$ characterizes $\aleph_{\omega_{\alpha + 1}}$
        \item $2^{\aleph_{\alpha}} < 2^{\aleph_{\alpha + 1}}$, $2^{\aleph_{\alpha + 1}}$ is weakly inaccessible and $Spec(\psi_{\alpha}) = [\aleph_{0}, 2^{\aleph_{\alpha + 1}})$
        \item $2^{\aleph_{\alpha}} < 2^{\aleph_{\alpha + 1}}$, $2^{\aleph_{\alpha + 1}}$ is weakly inaccessible and $MM-Spec(\psi_{\alpha})$ is a cofinal subset of $[\aleph_{\alpha + 1}, 2^{\aleph_{\alpha + 1}})$
        
        If, in addition $\alpha > 0$ is finite, then it is also consistent that
        \item $2^{\aleph_{\alpha}} = \aleph_{\alpha + 1} < \aleph_{\omega_{\alpha + 1}}$ and $AP-Spec(\psi_{\alpha})$ contains the whole interval $[\aleph_{\alpha + 2}, \aleph_{\omega_{\alpha + 1}}]$ and possibly $\aleph_{\alpha + 1}$
        \item $2^{\aleph_{\alpha}} = \aleph_{\alpha + 1} < 2^{\aleph_{\alpha + 1}}$, $2^{\aleph_{\alpha + 1}}$ is weakly inaccessible and $AP-Spec(\psi_{\alpha})$ contains the whole interval $[\aleph_{\alpha + 2}, 2^{\aleph_{\alpha + 1}})$ and possibly $\aleph_{\alpha + 1}$
        
    \end{enumerate}
\end{cor}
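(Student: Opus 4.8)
The plan is to derive all five items by combining the structural results of Section~5 --- Theorems~\ref{spectrum}, \ref{maximal model spectrum}, \ref{JEPAP} and Corollary~\ref{results} --- with one of the two consistency constructions already established in this section. Items~(1) and~(4) are read off inside the model $V_0[H]$ of Theorem~\ref{forcing}, which adjoins an $\aleph_{\alpha+1}$-Kurepa tree with $\aleph_{\omega_{\alpha+1}}$ cofinal branches by an $\aleph_{\alpha+1}$-closed forcing over a model of GCH; items~(2), (3) and~(5) are read off inside a model of $SMP_2(\aleph_{\alpha+1})$ as in Theorem~\ref{consistency}, which is the step that consumes a Mahlo cardinal. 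In every case $\psi_{\alpha}$ is the $\mathcal{L}_{\omega_1,\omega}$-sentence built in Section~3, and the whole argument reduces to checking, in each universe, the hypotheses ``$\mathcal{B}(\aleph_{\alpha+1})>2^{\aleph_{\alpha}}$'', ``$\mathcal{B}(\aleph_{\alpha+1})$ is, or is not, a maximum'', ``$\alpha$ is finite'', and the value of $2^{\aleph_{\alpha}}$, and then quoting the relevant clause.

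For item~(1): in the model of Theorem~\ref{forcing} one has $2^{\aleph_{\alpha}}=\aleph_{\alpha+1}<\aleph_{\omega_{\alpha+1}}=\mathcal{B}(\aleph_{\alpha+1})$, and this supremum is a maximum, being witnessed by the generically added $\aleph_{\alpha+1}$-Kurepa tree. Since $\mathcal{B}(\aleph_{\alpha+1})>2^{\aleph_{\alpha}}$ is attained, Corollary~\ref{results}(2) applies and gives that $\psi_{\alpha}$ characterizes $\mathcal{B}(\aleph_{\alpha+1})=\aleph_{\omega_{\alpha+1}}$, while $2^{\aleph_{\alpha}}\le\aleph_{\omega_{\alpha+1}}$ is clear. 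For items~(2) and~(3): in a model of $SMP_2(\aleph_{\alpha+1})$, Proposition~\ref{SMP-GMA} together with Theorem~\ref{consistency} gives $2^{\aleph_{\alpha}}=\aleph_{\alpha+1}<2^{\aleph_{\alpha+1}}=\mathcal{B}(\aleph_{\alpha+1})$, that $2^{\aleph_{\alpha+1}}$ is weakly inaccessible, and --- the point that matters here --- that $\mathcal{B}(\aleph_{\alpha+1})$ is \emph{not} a maximum: no $\aleph_{\alpha+1}$-Kurepa tree has $2^{\aleph_{\alpha+1}}$ cofinal branches, while for every $\kappa<2^{\aleph_{\alpha+1}}$ there is one with at least $\kappa$ branches. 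Corollary~\ref{results}(3) then yields $Spec(\psi_{\alpha})=[\aleph_{0},2^{\aleph_{\alpha+1}})$, which is item~(2), and its ``moreover'' clause supplies maximal models in $\aleph_{\alpha+1}$ and in cofinally many cardinalities below $2^{\aleph_{\alpha+1}}$. Since $MM-Spec(\psi_{\alpha})\subseteq Spec(\psi_{\alpha})=[\aleph_{0},2^{\aleph_{\alpha+1}})$ and every maximal-model cardinal is $\ge\aleph_{\alpha+1}$ by Theorem~\ref{maximal model spectrum}, it follows that $MM-Spec(\psi_{\alpha})$ is a cofinal subset of $[\aleph_{\alpha+1},2^{\aleph_{\alpha+1}})$, which is item~(3).

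For items~(4) and~(5) assume in addition $0<\alpha<\omega$; this is precisely the regime in which Theorem~\ref{JEPAP}(2) produces amalgamation. Both underlying forcings keep $2^{\aleph_{\alpha}}=\aleph_{\alpha+1}$ (the Kurepa forcing is $\aleph_{\alpha+1}$-closed over a GCH model; $SMP_2(\aleph_{\alpha+1})$ includes $\aleph_{\alpha+1}^{<\aleph_{\alpha+1}}=\aleph_{\alpha+1}$), so the threshold ``$2^{\aleph_{\alpha}}$'' in Theorem~\ref{JEPAP}(2) equals $\aleph_{\alpha+1}$. For item~(4) work in the model of Theorem~\ref{forcing}: there $Spec(\psi_{\alpha})=[\aleph_{0},\aleph_{\omega_{\alpha+1}}]$ by Corollary~\ref{results}(2), so every $\kappa\in[\aleph_{\alpha+2},\aleph_{\omega_{\alpha+1}}]$ satisfies $\kappa>2^{\aleph_{\alpha}}$ and $\kappa\in Spec(\psi_{\alpha})$, whence $(\mathbf{K}_{\alpha},\prec_{\mathbf{K}_{\alpha}})$ satisfies AP at $\kappa$ by the case $\kappa>2^{\aleph_{\alpha}}$ of Theorem~\ref{JEPAP}(2); thus $AP-Spec(\psi_{\alpha})\supseteq[\aleph_{\alpha+2},\aleph_{\omega_{\alpha+1}}]$. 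For item~(5) work in the $SMP_2(\aleph_{\alpha+1})$ model: there $Spec(\psi_{\alpha})=[\aleph_{0},2^{\aleph_{\alpha+1}})$ by item~(2), and the identical argument gives AP at every $\kappa\in[\aleph_{\alpha+2},2^{\aleph_{\alpha+1}})$. In both cases the status of $\aleph_{\alpha+1}$ is deliberately left open: since $2^{\aleph_{\alpha}}=\aleph_{\alpha+1}$, the case $\aleph_{\alpha+1}\le\kappa\le 2^{\aleph_{\alpha}}$ of Theorem~\ref{JEPAP}(2) makes AP at $\aleph_{\alpha+1}$ equivalent to the non-existence of a $(\mu,\aleph_{\alpha},\aleph_{\alpha+1})$-Kurepa tree with $\mu\le\aleph_{\alpha}$, which the forcings above do not settle --- hence the qualifier ``possibly $\aleph_{\alpha+1}$''.

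I expect the only genuinely delicate point to be the non-attainment/cofinality input needed for item~(3): one must verify that in the $SMP_2$ model $\mathcal{B}(\aleph_{\alpha+1})=2^{\aleph_{\alpha+1}}$ is a supremum of branch-numbers actually realized by $\aleph_{\alpha+1}$-Kurepa trees, so that Theorem~\ref{maximal model spectrum}(3) can be invoked at cofinally many cardinals below $2^{\aleph_{\alpha+1}}$. This is exactly what Theorem~\ref{consistency} delivers: arbitrarily large proper $\aleph_{\alpha+1}$-Kurepa trees exist, while none reaches $2^{\aleph_{\alpha+1}}$ many branches (via the $\Sigma_{1}^{1}$ perfect-set property of Proposition~\ref{SMP-GMA}(3)), so any Kurepa tree witnessing ``at least $\kappa$ branches'' in fact has exactly $\lambda$ branches for some $\kappa\le\lambda<2^{\aleph_{\alpha+1}}$, and such a $\lambda$ lies in $MM-Spec(\psi_{\alpha})$. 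Everything else is routine bookkeeping.
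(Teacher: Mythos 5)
Your proposal is correct and follows essentially the same route as the paper's own proof: it reads items (1) and (4) off the model of Theorem~\ref{forcing} and items (2), (3), (5) off the $SMP_{2}(\aleph_{\alpha+1})$ model of Theorem~\ref{consistency}, then quotes Theorems~\ref{spectrum}, \ref{maximal model spectrum}, \ref{JEPAP} and Corollary~\ref{results}, exactly as the paper does. You simply spell out the bookkeeping (the value of $2^{\aleph_{\alpha}}$, whether $\mathcal{B}(\aleph_{\alpha+1})$ is attained, and the ``possibly $\aleph_{\alpha+1}$'' caveat from the middle case of Theorem~\ref{JEPAP}(2)) in more detail than the paper's one-paragraph argument.
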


\begin{proof}
    All of these results follow from Theorem \ref{spectrum}, Theorem \ref{maximal model spectrum}, Corollary \ref{results} and Theorem \ref{JEPAP} using the appropriate model of ZFC each time. For (1) use the model of Theorem \ref{forcing} and for (4) use again Theorem \ref{forcing} and Theorem \ref{JEPAP}. Finally for (2), (3) and (5) use the model of Theorem \ref{consistency} (here we assume the existence of a Mahlo cardinal). Note here that the amalgamation spectrum may not be equal to each interval, but it may contains more cardinals. This depends on the existence or not of $(\mu, \aleph_{\alpha}, \kappa)$-Kurepa trees, where $\mu \leq \aleph_{\alpha}$ (Theorem \ref{JEPAP}). 
\end{proof}

\underline{Observation:} Finally, since in Theorem \ref{forcing} we can replace $\aleph_{\omega_{\alpha + 1}}$ with any $\lambda \geq \aleph_{\alpha + 2}$ with $cf(\lambda) > \aleph_{\alpha}$, the same is true in the previous Corollary. 

\begin{cor}\label{nonabsolute}
The amalgamation property for $\mathcal{L}_{\omega_{1}, \omega}$-sentences is not absolute for cardinals $ 2^{\aleph_{\alpha}}<\kappa\leq 2^{\aleph_{\alpha + 1}}$. 
\end{cor}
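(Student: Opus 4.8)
We may assume $0<\alpha<\omega$, since for $\alpha\ge\omega$ Theorem \ref{JEPAP} shows $(\mathbf{K}_{\alpha},\prec_{\mathbf{K}_{\alpha}})$ fails AP in every cardinality in every model of ZFC, so $AP-Spec(\psi_{\alpha})=\emptyset$ is absolute, and $\alpha=0$ is covered by \cite{Kurepatrees}. Fix a cardinal $\kappa$ with $2^{\aleph_{\alpha}}<\kappa\le 2^{\aleph_{\alpha+1}}$. The plan is to exhibit two models of ZFC, in one of which $\kappa\in AP-Spec(\psi_{\alpha})$ and in the other $\kappa\notin AP-Spec(\psi_{\alpha})$. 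The guiding reduction is that, by Theorem \ref{JEPAP}(2), for finite $\alpha>0$ every cardinal $\mu>2^{\aleph_{\alpha}}$ satisfies $\mu\in AP-Spec(\psi_{\alpha})$ iff $\mu\in Spec(\psi_{\alpha})$: if $\mu\in Spec(\psi_{\alpha})$ the third bullet of Theorem \ref{JEPAP}(2) gives AP at $\mu$, and otherwise $\mu\notin AP-Spec(\psi_{\alpha})$ trivially. So it suffices to put $\kappa$ into, respectively out of, $Spec(\psi_{\alpha})$ while keeping $2^{\aleph_{\alpha}}<\kappa$.

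For the first model I would take the model of Theorem \ref{consistency} (an $SMP_{2}(\aleph_{\alpha+1})$-model, using a Mahlo cardinal), or for $\kappa<2^{\aleph_{\alpha+1}}$ the variant of Theorem \ref{forcing} and Corollary \ref{last} with a suitable $\lambda$. There $2^{\aleph_{\alpha}}=\aleph_{\alpha+1}$, $2^{\aleph_{\alpha+1}}$ is weakly inaccessible, and $Spec(\psi_{\alpha})=[\aleph_{0},2^{\aleph_{\alpha+1}})$; hence every $\kappa$ with $2^{\aleph_{\alpha}}<\kappa<2^{\aleph_{\alpha+1}}$ is in $Spec(\psi_{\alpha})$, so by the reduction $\kappa\in AP-Spec(\psi_{\alpha})$ --- this is Corollary \ref{results1}(5). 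For the endpoint $\kappa=2^{\aleph_{\alpha+1}}$ I would instead use a GCH model carrying an $\aleph_{\alpha+1}$-Kurepa tree with $\aleph_{\alpha+2}$ cofinal branches (e.g.\ $L$), so that $\mathcal{B}(\aleph_{\alpha+1})=\aleph_{\alpha+2}=2^{\aleph_{\alpha+1}}>2^{\aleph_{\alpha}}$ is attained; by Corollary \ref{results}(2), $\psi_{\alpha}$ then characterizes $2^{\aleph_{\alpha+1}}$, which is therefore in $AP-Spec(\psi_{\alpha})$.

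For the second model I would invoke the classical independence of Kurepa trees from \cite{IndependenceKurepaWeakKurepa, IndependenceKurepa}: over a GCH ground model, collapsing an inaccessible $\theta$ to $\aleph_{\alpha+2}$ via $\mathrm{Coll}(\aleph_{\alpha+1},<\theta)$ produces a model in which GCH holds below $\aleph_{\alpha+1}$ (so $2^{\aleph_{\alpha}}=\aleph_{\alpha+1}$ and $\aleph_{\alpha}^{\aleph_{0}}\le\aleph_{\alpha+1}$) and there is no $\aleph_{\alpha+1}$-Kurepa tree. In such a model Theorem \ref{spectrum}(3) gives $Spec(\psi_{\alpha})=[\aleph_{0},\aleph_{\alpha+1}]$: by Lemma \ref{height} any model of $\psi_{\alpha}$ of size $>2^{\aleph_{\alpha}}$ codes an $\aleph_{\alpha+1}$-Kurepa tree, which does not exist, and the estimate $\lambda^{\mu}\le\aleph_{\alpha+1}^{\aleph_{\alpha}}=2^{\aleph_{\alpha}}=\aleph_{\alpha+1}$ rules out any $(\mu,\lambda,\kappa)$-Kurepa tree with $\kappa>\aleph_{\alpha+1}$ and $\mu\le\lambda\le\aleph_{\alpha+1}$. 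Hence no $\kappa>\aleph_{\alpha+1}=2^{\aleph_{\alpha}}$ is in $Spec(\psi_{\alpha})$, a fortiori none is in $AP-Spec(\psi_{\alpha})$.

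Comparing the two models establishes the Corollary. The one delicate point is the bookkeeping on $2^{\aleph_{\alpha+1}}$: in the first model it is weakly inaccessible, whereas the most economical ``no $\aleph_{\alpha+1}$-Kurepa tree'' model has $2^{\aleph_{\alpha+1}}=\aleph_{\alpha+2}$. For a single fixed cardinal --- say $\kappa=\aleph_{\alpha+2}$ --- the two models already exhibit non-absoluteness outright, which is what the statement asks; to get it uniformly on the whole interval $(2^{\aleph_{\alpha}},2^{\aleph_{\alpha+1}}]$ one either reads the bound $2^{\aleph_{\alpha+1}}$ in the first model, or, to keep a common continuum function, follows the collapse in the second model by $\mathrm{Add}(\aleph_{\alpha+1},\nu)$ with $\mathrm{cf}(\nu)>\aleph_{\alpha+1}$ --- an $\aleph_{\alpha+1}$-closed, $\aleph_{\alpha+2}$-c.c.\ forcing that fixes $2^{\aleph_{\alpha}}$ and, one checks, creates no new $\aleph_{\alpha+1}$-Kurepa trees. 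That last verification is the step I expect to be the main obstacle.
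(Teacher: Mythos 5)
Your argument is correct and follows the same skeleton as the paper's proof: reduce membership of a cardinal $\kappa>2^{\aleph_{\alpha}}$ in $AP\text{-}Spec(\psi_{\alpha})$ to membership in $Spec(\psi_{\alpha})$ via the third bullet of Theorem \ref{JEPAP}(2), and then exhibit, for the fixed $\kappa$, one model of ZFC where $\kappa$ is in the spectrum and one where it is not. The difference is in which models you use and what hypotheses they cost. For the positive direction the paper simply runs Theorem \ref{forcing} and Corollary \ref{last} with $\lambda\ge\kappa$ (e.g.\ $\lambda=\kappa^{+}$), which needs no large cardinals; your $SMP_{2}(\aleph_{\alpha+1})$ model spends a Mahlo (though you note the forcing alternative), and your use of $L$ to realize the endpoint $\kappa=2^{\aleph_{\alpha+1}}$ exactly is a nice extra the paper does not bother with, since it is content to have $\kappa$ land strictly inside the interval of the forcing extension. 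For the negative direction the paper again uses Theorem \ref{forcing}, now with $\aleph_{\alpha+2}\le\lambda<\kappa$, so that $\psi_{\alpha}$ characterizes $\lambda$ and $\kappa$ drops out of the spectrum --- no large cardinal at all --- and it reserves the ``no $\aleph_{\alpha+1}$-Kurepa trees'' model only for the boundary case $\kappa=\aleph_{\alpha+2}$; that model is precisely your Levy collapse $\mathrm{Coll}(\aleph_{\alpha+1},<\theta)$ of an inaccessible, a hypothesis the paper leaves implicit. So your route is sound but consumes an inaccessible (resp.\ a Mahlo) for cases the paper handles in plain ZFC, while being more explicit than the paper on two points it glosses over: the restriction to finite $\alpha>0$ (forced by Theorem \ref{JEPAP}) and the large-cardinal cost of the $\kappa=\aleph_{\alpha+2}$ case.

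Two remarks on your closing paragraph. The ``uniform bookkeeping'' repair is not needed: the corollary, as the paper proves it, is exactly the fixed-$\kappa$, two-models statement, and the paper's own pair of models does not keep $2^{\aleph_{\alpha+1}}$ matched either (in its negative model $2^{\aleph_{\alpha+1}}=\lambda=\aleph_{\alpha+2}$, which may lie below $\kappa$). Moreover, the verification you flag as the main obstacle is genuinely problematic rather than routine: whether the $\aleph_{\alpha+1}$-closed Cohen forcing $\mathrm{Add}(\aleph_{\alpha+1},\nu)$ over a model of $2^{\aleph_{\alpha}}=\aleph_{\alpha+1}$ creates new $\aleph_{\alpha+1}$-Kurepa trees is a delicate, model-dependent question (already for $\alpha=0$ this is the theme of Jin and Shelah's work on which forcings can create Kurepa trees), so you should not expect that preservation step to go through without further hypotheses. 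Since it only concerns the optional strengthening, the proof of the corollary as stated is unaffected.
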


\begin{proof}
    Let $2^{\aleph_{\alpha}} < \kappa \leq 2^{\aleph_{\alpha + 1}}$. From the comment right after Theorem \ref{forcing}, we can replace $\aleph_{\omega_{\alpha + 1}}$ by any $\lambda \geq \aleph_{\alpha +2}$. So, first we take that $\lambda$ to be $\geq \kappa$ (e.g. $\lambda = \kappa^{+}$). Then $\emptyset \neq [2^{\aleph_{\alpha}}, \kappa] \subseteq Spec(\psi_{\alpha})$. So by Theorem \ref{JEPAP}, $\kappa \in AP$-$Spec(\psi_{\alpha})$. On the other hand, let $\lambda$ be smaller than $ \kappa$, but $\lambda \geq \aleph_{\alpha + 2}$ (e.g. $\lambda = \aleph_{\alpha + 2}$). Then $\kappa \notin AP$-$Spec(\psi_{\alpha})$, since $\kappa \notin Spec(\psi_{\alpha})$ (Corollary \ref{results1} (1) combined with the last observation). 
 
    Observe here that if $\kappa = \aleph_{\alpha + 2} > 2^{\aleph_{\alpha}} = \aleph_{\alpha + 1}$, we cannot use Theorem \ref{forcing} to prove that $\kappa \notin AP$-$Spec(\psi_{\alpha})$. In this case, we construct a model with $2^{\aleph_{\alpha}} = \aleph_{\alpha + 1}$ and $Spec(\psi_{\alpha}) = [\aleph_{0}, \aleph_{\alpha + 1}]$, for example by killing all $\aleph_{\alpha + 1}$-Kurepa trees. Then  $2^{\aleph_{\alpha}} < \aleph_{\alpha + 2}$ and $\aleph_{\alpha + 2} \notin AP$-$Spec(\psi_{\alpha})$. The last statement holds, because if $|L_{\alpha + 1}| \leq \aleph_{\alpha}$, then our model cannot exceed the size of $2^{\aleph_{\alpha}}$ and we have noticed before that if $|L_{\alpha + 1}| = \aleph_{\alpha + 1}$, then we have either an $\aleph_{\alpha + 1}$-Kurepa tree or the tree has $\leq \aleph_{\alpha + 1}$-many cofinal branches and therefore the size of the model is $\leq \aleph_{\alpha + 1}$.
\end{proof}

We cannot conclude if the amalgamation property for $\mathcal{L}_{\omega_{1}, \omega}$-sentences is absolute or not for $\aleph_{1}$, since it is never included in the amalgamation spectrum.

\section*{Acknowledgements}

The author would like to thank Dima Sinapova for useful clarifications on the proofs of some of the Theorems from \cite{Kurepatrees} and also David Aspero that helped with some details of the proof of Theorem \ref{forcing}.

Furthermore, we want to thank Joseph Van Name and Monroe Eskew for answering two MathOverflow questions, \cite{q1} and \cite{q2} correspondingly.

\bibliographystyle{plain} 
\bibliography{bibliography.bib}

\end{document}